\numberwithin{equation}{section}
\newtheorem{theorem}{Theorem}[section]
\newtheorem{lemma}[theorem]{Lemma}
\newtheorem{prop}[theorem]{Proposition}
\theoremstyle{definition}
\newtheorem{definition}[theorem]{Definition}
\newtheorem{remark}[theorem]{Remark}
\newtheorem{condition}[theorem]{Condition}
\newtheorem{example}[theorem]{Example}
\newtheorem{assumption}[theorem]{Assumption}
\newtheorem{nota}[theorem]{Notation}
\Crefname{theorem}{Theorem}{Theorems}
\Crefname{definition}{Definition}{Definitions}
\Crefname{lemma}{Lemma}{Lemmas}
\Crefname{prop}{Proposition}{Propositions}
\Crefname{remark}{Remark}{Remarks}
\Crefname{cor}{Corollary}{Corollaries}
\Crefname{example}{Example}{Examples}
\Crefname{condition}{Condition}{Conditions}
\Crefname{assumption}{Assumption}{Assumptions}
\Crefname{appsec}{Appendix}{Appendices}
\Crefname{appsubsec}{Appendix}{Appendices}
\newcommand\donotdisplay[1]{\null}
\newcommand{\ra}{\rightarrow}
\newcommand{\da}{\downarrow}
\newcommand{\ua}{\uparrow}
\renewcommand{\b}{\beta}
\renewcommand{\k}{\kappa}
\newcommand{\E}{\mathord{\rm E}}
\newcommand{\RR}{\mathbb{R}}
\newcommand{\NN}{\mathbb{N}}
\newcommand{\HH}{\mathbb{H}}
\newcommand{\given}{\,|\ }
\renewcommand{\l}{\lambda}
\renewcommand{\th}{\theta}
\renewcommand{\t}{\tau}
\newcommand{\e}{\varepsilon}
\newcommand{\s}{\sigma}
\renewcommand{\d}{\delta}
\renewcommand{\a}{\alpha}
\newcommand{\g}{\gamma}
\newcommand{\q}{\theta}
\newcommand{\Range}{\mathop{\rm Range}\nolimits}
\newcommand{\trace}{\mathop{\rm Trace}\nolimits}
\newdimen\cdsep
\def\cdstrut{\vrule height .6\cdsep width 0pt depth .4\cdsep}
\def\@cdstrut{{\advance\cdsep by 2em\cdstrut}}
\def\arrow#1#2{
  \ifx d#1
    \llap{$\scriptstyle#2$}\left\downarrow\cdstrut\right.\@cdstrut\fi
  \ifx u#1
    \llap{$\scriptstyle#2$}\left\uparrow\cdstrut\right.\@cdstrut\fi
  \ifx r#1
    \mathop{\hbox to \cdsep{\rightarrowfill}}\limits^{#2}\fi
  \ifx l#1
    \mathop{\hbox to \cdsep{\leftarrowfill}}\limits^{#2}\fi
}
\definecolor{lime}{HTML}{A6CE39}
\DeclareRobustCommand{\orcidicon}{%
	\begin{tikzpicture}
	\draw[lime, fill=lime] (0,0) 
	circle [radius=0.16] 
	node[white] {{\fontfamily{qag}\selectfont \tiny ID}};
	\draw[white, fill=white] (-0.0625,0.095) 
	circle [radius=0.007];
	\end{tikzpicture}
	\hspace{-2mm}
}
\xdef\csname orcid\x\endcsname{\noexpand\href{https://orcid.org/\csname orcidauthor\x\endcsname}{\noexpand\orcidicon}}
\begin{document}

\title[{B}ayesian Bayesian inverse problems]{Bayesian linear inverse problems in regularity scales}

\author[S.~Gugushvili]{Shota Gugushvili$^1$\orcidA{}}
\address{$^1$Biometris\\
	Wageningen University \& Research\\
	Postbus 16\\
	6700 AA Wageningen\\
	The Netherlands}
\email{shota.gugushvili@wur.nl}
\thanks{The research leading to the results in this paper has received funding from the European Research Council under ERC Grant Agreement 320637.}

\author[A. W.~van der Vaart]{Aad W. van der Vaart$^2$\orcidB{}}
\address{$^2$Mathematical Institute\\
	Faculty of Science\\
	Leiden University\\
	P.O. Box 9512\\
	2300 RA Leiden\\
	The Netherlands}
\email{avdvaart@math.leidenuniv.nl}

\author[D. Yan]{Dong Yan$^2$}
\email{d.yan@math.leidenuniv.nl}

\subjclass[2000]{Primary: 62G20, Secondary: 35R30}

\keywords{Adaptive estimation; Gaussian prior; Hilbert scale; linear inverse problem; nonparametric Bayesian estimation; posterior contraction rate; random series prior; regularity scale; white noise}

\begin{abstract}
	
We obtain rates of contraction of posterior distributions in inverse problems
defined by scales of smoothness classes. We derive abstract results for general
priors, with contraction rates determined by Galerkin approximation. The rate
depends on the amount of prior concentration near the true function and the
prior mass of functions with inferior Galerkin approximation. We apply the general
result to non-conjugate series priors, showing that these priors give 
near optimal and adaptive recovery in some generality, Gaussian priors, and
mixtures of Gaussian priors, where the latter are also shown to be near
optimal and adaptive. The proofs are based on  general testing and
approximation arguments, without explicit calculations on the posterior
distribution. We are thus not restricted to priors based on the singular
value decomposition of the operator. We illustrate the results with
examples of inverse problems resulting from differential equations.
	
\end{abstract}

%\date{\today}

\maketitle

\section{Introduction}
\label{sec:whitenoise_Introduction}

In a statistical inverse problem one observes a noisy version of a transformed signal $Af$ and wishes to 
recover the unknown parameter $f$. In this paper we consider linear inverse problems of the type
\begin{align}
\label{eq:whitenoise_Y=Af+xi}
Y^{(n)} = A f + \frac{1}{\sqrt{n}}\xi,
\end{align}
where $A: H\to G$ is a known bounded linear operator between separable Hilbert spaces $H$ and
$G$, and $\xi$ is a stochastic `noise' process, which is multiplied by the scalar `noise level' $n^{-1/2}$.
The problem is to infer $f$ from the observation $Y^{(n)}$.  To this purpose we assume that the \emph{forward operator} $A$ is injective, but
we shall be interested in the case that the inverse $A^{-1}$, defined on the range of $A$ is not
continuous (or equivalently the range of $A$ is not closed in $G$). The problem of recovering $f$
from $Y^{(n)}$ is then \textit{ill-posed}, and \textit{regularization} 
methods are necessary in order to `invert' the operator $A$. These consist of constructing
an approximation to $A^{-1}$,  with natural properties such as boundedness and whose domain
includes the data $Y^{(n)}$, and applying this to $Y^{(n)}$.
By the discontinuity of the inverse $A^{-1}$, the noise present in the observation is necessarily multiplied,
and regularization is focused on balancing the error in the approximation to $A^{-1}$ to the
size of the magnified noise, in order to obtain a solution that is as close as possible to the true signal $f$.
In this article we study this through the convergence rates of the regularized
solutions to a true parameter $f$, as $n\ra \infty$, i.e.\ as the noise level tends to zero.
In particular, we consider contraction rates of posterior distributions resulting from
a Bayesian approach to the problem. 

There is a rich literature on inverse problems. The case that the noise $\xi$ is a 
bounded \emph{deterministic} perturbation, has been particularly well studied, 
and various general procedures and methods to estimate the convergence rates of regularized solutions 
have been proposed. See the monographs \cite{engl2000regularization,kirsch2011introduction}.
The case of stochastic noise is less studied, but is receiving increasing attention.
In this paper we shall be mostly interested in the case that $\xi$ is white noise
indexed by the Hilbert space $G$, i.e.\ the \emph{isonormal process}, which is characterized by the
requirement that $\langle \xi,w\rangle_G$ is a zero-mean Gaussian variable with
variance $\|w\|_G^2$, for every $w\in G$, where $\langle\cdot,\cdot\rangle_G$ and $\|\cdot\|_G$ are the inner
product and norm in in $G$. Actually the isonormal process cannot be realized as
a Borel-measurable map into $G$, and hence we need to interpret \cref{eq:whitenoise_Y=Af+xi} in
a generalized sense. In our measurement model the observation $Y^{(n)}$ will be a stochastic
process $\bigl(Y^{(n)}(w): w\in G\bigr)$ such that 
\begin{align}
\label{eq:whitenoise_observation_scheme}
Y^{(n)}(w) = \langle Af, w \rangle_{G}+ \frac{1}{\sqrt{n}} \xi(w), \qquad w\in G,
\end{align}
where $\xi=\bigl(\xi(w): w\in G\bigr)$ is the iso-normal process, i.e.\ a zero-mean Gaussian process
with covariance function $\mathbb{E}\bigl(\xi(w_1)\xi(w_2)\bigr) = \langle w_1,w_2 \rangle_{G}$. 
The processes $Y^{(n)}$ and $\xi$ are viewed as measurable maps in the \emph{sample space} $\RR^G$, with its
product $\s$-field. Statistical sufficiency considerations show that
the observation can also be reduced to the vector $\bigl(Y^{(n)}(w_1),Y^{(n)}(w_2),\ldots\bigr)$, which
takes values in the sample space $\RR^\infty$, for any orthonormal basis $(w_i)_{i\in\NN}$ of $G$.
Since in that case the variables $\xi(w_1), \xi(w_2),\ldots$ are stochastically independent standard normal variables,
the coordinates $Y^{(n)}(w_i)$ of this vector are independent random variables 
with normal distributions with means $\langle Af, w_i \rangle_{G}$ and
variance $1/n$. This is known as the \emph{Gaussian sequence model} in statistics, albeit presently
the `drift function' $Af$ involves the operator $A$. See 
\cite{ibragimov2013statistical_estimation,brown1996equivalencenonpararegressionandwhitenoise} and references therein. 

An alternative method to give a rigorous interpretation to white noise $\xi$, is to 
embed $G$ into a bigger space in which $\xi$ can be realized as a Borel measurable map, or to think
of $\xi$ as a cylindrical process. See e.g., \cite{skorohod_1974_integration_in_hilbert_sapce}.
For $G$ a set of functions on an interval, one can also realize $\xi$ as a stochastic integral
relative to Brownian motion, which  takes its values in the `abstract Wiener space' attached to $G$.
We shall not follow these constructions, as they imply the stochastic process
version \cref{eq:whitenoise_observation_scheme}, which is easier to grasp and will be the basis for our proofs.

It is also possible to consider the model \cref{eq:whitenoise_Y=Af+xi} with a noise variable
$\xi$ that takes its values inside the Hilbert space $G$. In this paper we briefly note some results
on this `coloured noise' model, but our main focus is model \cref{eq:whitenoise_observation_scheme}.

The study of statistical (nonparametric) linear inverse problems was initiated by Wahba in 1970s in
\cite{wahba1977integraloperator}. The 1990s paper \cite{donoho1995WVD} used wavelet shrinkage methods,
while around 2000, the authors of \cite{chow_1999_illposed_linear_pde} investigated \cref{eq:whitenoise_Y=Af+xi} 
in the linear partial differential equations setting, while a
systematic study of Gaussian sequence models was presented in \cite{cavalier2002sharp}.
A review of work until 2008 is given in  \cite{cavalier2008nonparametricinverseproblems}.
The connection of regularization methods to the Bayesian approach was recognized 
early on. However, the study of the recovery properties of posterior distributions was started only in
\cite{knapik2011bayesianmild,knapik2013bayesianextreme}. A review of the Bayesian
approach to inverse problems, with many examples, is given in \cite{stuart2010bayesianperspective}.

In the present paper we follow the Bayesian approach. 
This consists of putting a probability measure on $f$, the \emph{prior}, that quantifies one's prior
beliefs on $f$, and next, after collecting the data, updating the prior to the \emph{posterior} measure,
through Bayes' formula. As always, this is the conditional distribution of $f$ given $Y^{(n)}$ in the model,
where $f$ follows the prior measure $\Pi$, a Borel probability distribution on $H$,
and given $f$ the variable $Y^{(n)}$ has the conditional distribution on $\RR^G$ determined by \cref{eq:whitenoise_observation_scheme}.
For a given $f\in H$ the latter conditional distribution is dominated by its distribution under $f=0$. 
The Radon-Nikodym densities $y\mapsto p_f^{(n)}(y)$ of the conditional distributions
can be chosen jointly measurable in $(y,f)$, and by Bayes' formula the posterior distribution 
of $f$ is the Borel measure on $H$ given by
\begin{align}
\label{eq:whitenoise_bayes_formula}
\Pi_n(f \in B \given Y^{(n)}) = \frac{\int_B p_f^{(n)} (Y^{(n)}) \,d\Pi(f) }{\int p_f^{(n)} (Y^{(n)})\, d\Pi(f)}.
\end{align}
The form of the densities $p_f^{(n)}$ is given by the (abstract) Cameron-Martin formula, but will
not be needed in the following (see Lemma~\ref{LemmaKL}).
In the Bayesian paradigm the posterior distribution encompasses all the
necessary information for inference on $f$. An attractive feature of the Bayesian approach 
is that it not only offers an estimation procedure, through a measure of `center' of the
posterior distribution, but also provides a way to conduct uncertainty quantification, through the
spread in the posterior distribution.

One hopes that as the noise level tends to zero, i.e.\ $n\ra\infty$, the posterior measures 
\cref{eq:whitenoise_bayes_formula} will contract to
a Dirac measure at $f_0$ if in reality $Y^{(n)}$ is generated through the model 
\cref{eq:whitenoise_observation_scheme} with $f=f_0$. We shall be interested in the \emph{rate}
of contraction. Following \cite{vandervaart2000posterior,vandervaart2007noniid,vandervaart2017fundamentals}
we say that a sequence $\e_n\downarrow 0$ 
is a rate of posterior contraction to $f_0$ if, for a fixed sufficiently large constant $M$, and $n\ra\infty$,
\begin{align}
\label{eq:whitenoise_contraction_rate_general}
\Pi_n\bigl(f: \|f-f_0\|_H >M \e_n \given  Y^{(n)}\bigr) \overset{\mathbb{P}^{(n)}_{f_0} }{\ra} 0.
\end{align}
We shall use the general approach to establishing such rates of contraction, based on a prior
mass condition and testing condition, explained in \cite{vandervaart2007noniid}.
This was adapted to the inverse setup in \cite{knapiksalomond}, who in a high level
result show how to obtain an inverse rate from 
a rate in the forward problem and a continuity
modulus of the restriction of the operator to suitable sets on which the posterior concentrates.

Much of the existing work on statistical inverse problems is based on
the singular value decomposition (SVD) of the operator $A$; see, e.g.,
\cite{cavalier2008nonparametricinverseproblems}. When $A$ is compact,
the operator $A^*A$, where $A^*$ is the adjoint of $A$, can be
diagonalized with respect to an orthonormal \textit{eigenbasis}, with
eigenvalues tending to zero. The observation $Y^{(n)}$ can then be
reduced to noisy observations on the Fourier coefficients of $Af$ in
the eigenbasis, which are multiples of the Fourier coefficients of
$f$, and the problem is to recover the latter. In the frequentist
setup thresholding or other regularization methods can be applied to
reduce the weight of estimates on coefficients corresponding to
smaller eigenvalues, in which the noise will overpower the signal. In
the Bayesian setup one may design a prior by letting the Fourier
coefficients be (independent) random variables, with smaller variances
for smaller eigenvalues.  These singular value methods have several
disadvantages, as pointed out in
\cite{cohen2004adaptive_galerkin,donoho1995WVD}. First, the eigenbasis
functions might not be easy to compute. Second, and more importantly,
these functions are directly linked to the operator $A$, and need not
be related to the function space (smoothness class) that is thought to
contain the true signal $f$. Consequently, the parameter of interest
$f$ may not have a simple, parsimonious representation in the
eigenbasis expansion, see \cite{donoho1995WVD}. Furthermore, it is
logical to consider the series expansion of the signal $f$ in other
bases than the eigenbasis, for instance, in the situation that one can
only measure noisy coefficients of the signal $f$ in a given basis
expansion, due to a particular experimental setup. See
\cite{goldenshluger2000inverse_functional_hilbertscales,mathe2001optimal_discretization_inverse_hilbert_scales}
for further discussion.

One purpose of the present paper is to work with priors that directly relate to common bases (e.g.,
splines or wavelets bases) and function spaces, rather than to the operator through its singular
value decomposition. We succeed in this aim under the assumption that the operator $A$ 
respects a given scale of function spaces. In \Cref{sec:whitenoise_structure_of_IP}
we first set up such a scale in an abstract manner, and then introduce a smoothing assumption
on the operator $A$ in terms of this scale. Next in 
\Cref{sec:whitenoise_random_series_prior}-\Cref{sec:whitenoise_gaussian_mixture_prior}
we consider priors defined in terms of the scale, rather than the operator. Thus
operator and prior are assumed related, but only indirectly, through the scale.

A canonical example are Sobolev spaces, with the operator $A$ being an
integral operator. This Sobolev space setup with wavelet basis was investigated in
\cite{donoho1995WVD,cohen2004adaptive_galerkin}. In deterministic inverse problems, a more
general setup, considering $A$ that acts along nested Hilbert spaces, \textit{Hilbert scales}, was
initiated by Natterer in \cite{natterer1984hilbertscales} and further developed in, amongst others,
\cite{mathe2001optimal_discretization_inverse_hilbert_scales,mair1996inverse_hilbert_scales,hegland1995hilbertscales_interpolation}.
In the Bayesian context Hilbert scales were used in \cite{florens2016regularizing}, under the assumption that 
the noise $\xi$ is a proper Gaussian element in $G$, and in \cite{agapiou_2013_gaussian_linear_inverse},
but under rather intricate assumptions. 

A second purpose of the present paper is to allow priors that are not necessarily Gaussian.
In the linear inverse problem Gaussian priors are easy, as they lead to Gaussian posterior distributions,
which can be studied by direct means. Most of the results on Bayesian inverse problems
fall in this framework \cite{knapik2011bayesianmild,knapik2013bayesianextreme,florens2016regularizing,agapiou_2013_gaussian_linear_inverse}, exceptions being \cite{ray2013nonconjugate} and
\cite{knapiksalomond}.

Thus in this paper we investigate a Bayesian approach to linear inverse problems that is not based on the SVD 
and does cover non-conjugate, non-Gaussian priors. 

The white noise model represents a limiting case (in an appropriate sense) of the inverse regression model
\begin{align*}
Y_i = A f(x_i) + z_i, \quad i=1,\cdots,n,
\end{align*}
where $z_i$ are independent standard normal random variables. Insights gained in inverse problems in the white noise model shed light on the behaviour of statistical procedures in the inverse regression model, which is the one encountered in actual practice, as the signal $f$ can be typically observed only on a discrete grid of points. It is next at times possible to extend theoretical results obtained in the white noise setting to those in the inverse regression setting.

The paper is organized as follows. In \Cref{sec:whitenoise_structure_of_IP} we introduce in greater
detail our setup along with the assumptions  that will be used in this article. We also present some
examples for illustration. Next we present a general contraction theorem in 
\Cref{sec:whitenoise_general_contraction}, and apply this to two main special cases, series priors and
Gaussian priors in \Cref{sec:whitenoise_random_series_prior}
and~\Cref{sec:whitenoise_gaussian_prior}. The section on Gaussian priors is preceded
by a discussion in \Cref{SectionHilbertScales} of Hilbert scales generated 
by unbounded operators, which next serve as inverse covariance operators.
Since the simple Gaussian prior is not fully adaptive, we introduce Gaussian mixture priors to obtain adaptation in \Cref{sec:whitenoise_gaussian_mixture_prior}. In \Cref{sec:whitenoise_extension}
we discuss several extensions of the present  work. \Cref{SectionProofs} contains the proofs,
and an appendix presents background to some of the tools we need in the proofs.

\begin{nota}
	\label{nota:whitenoise_inequality_up_constant}
	The symbols $\lesssim, \gtrsim, \simeq$ mean $\leq, \geq, =$ up to a positive multiple independent of $n$,
	(or another asymptotic parameter). 
	The constant may be stated explicitly in subscripts, and e.g. $\lesssim_f$ means that it depends on $f$.
\end{nota}

\section{Setup}
\label{sec:whitenoise_structure_of_IP}
In this section we formalize the structure of the inverse problem that will be worked out in this article.

\subsubsection*{Smoothness scales}
The function $f$ in \cref{eq:whitenoise_Y=Af+xi} is an element of a Hilbert space $H$.
We embed this space as the space $H=H_0$ in a `scale of smoothness classes', defined as follows.

\begin{definition}[Smoothness scale]
	\label{cond:smoothness_class_structure}
	For every $s\in \RR$ the space $H_s$ is an infinite-dimensional, separable Hilbert space, with
	inner product $\langle \cdot,\cdot \rangle_{s}$ and induced norm $\|\cdot\|_{s}$.
	The spaces $(H_s)_{s\in\RR}$  satisfy the following conditions:
	\begin{enumerate}[(i)]
		%\item $H_0=H$.
		\item 
		For $s<t$ the space $H_t$ is a dense subspace of $H_s$ and $\|f\|_s \lesssim \|f\|_t$, for  $f\in H_t$. 
		\item 
		For $s\ge 0$ and  $f\in H_0$ viewed as element of $H_{-s}\supset H_0$,
		\begin{equation}
		\label{EqNormDuality}
		\| f\|_{-s}=\sup_{\|g\|_s\le 1}\langle f,g\rangle_0, \qquad f\in H_0.
		\end{equation}
	\end{enumerate}
\end{definition}

The notion of scales of smoothness classes is standard in the literature on inverse problems. 
In the preceding definition we have stripped it to the bare essentials needed in our general result on 
posterior contraction. Concrete examples, as well as more involved structures
such as Hilbert scales, are introduced below.

\begin{remark}
	We may also start with Hilbert spaces $H_s$ for $s\ge 0$ only satisfying  (i) and next define
	$H_{-s}$ for $s\ge0$ to be the dual space $H_s^*$. We  next embed $H_{-s}$ for $s\ge 0$ in $H_0$
	through identifying $H_0$ and its dual $H_0^*\subset H_s^*$ (the restriction of a continuous linear map from
	$H_0$ to $\RR$ to domain $H_s$ is contained in $H_s^*$), and 
	the \emph{norm duality} \cref{EqNormDuality} will be automatic.
	
	It is important that we only `flip'  $H_0$ in this contruction. Every Hilbert space $H_s$ can be identified with
	its dual $H_s^*$ in the usual way, but this involves
	the inner product in $H_s$, and is different from the identification of $H_s^*$ with the `bigger space' $H_{-s}$
	for $s\not=0$.
	
	More generally \cref{EqNormDuality} is implied if, for $s>0$, 
	the space $H_{-s}$ can be identified with the dual space $H_s^*$ of $H_{s}$ and 
	the embedding $\iota: H_0\to H_{-s}$ is the adjoint of the embedding $\iota: H_s\to H_0$,
	after the usual identification of $H_0$ and its dual space $H_0^*$. 
	(The three nested spaces $H_{-s}\supset H_0\supset H_s$ then form a `Gelfand triple'.)
	Indeed, by definition the image $i^*f$ of $f\in H_0=H_0^*$ under the adjoint $\iota^*: H_0^*\to H_s^*$ is the
	map $g\mapsto (\iota^*f)(g)=\langle \iota g, f\rangle_0=\langle g, f\rangle_0$ from $H_s\to \RR$. 
	The norm of this map as an element of $H_s^*$ is $\sup_{\|g\|_s\le 1}(\iota^*f)(g)$. 
	The norm duality follows if $\iota^*f$ is identified with the element $f\in H_0\subset H_{-s}$.
\end{remark}

We assume that the smoothness scale allows good finite-dimensional approximations,
as in the following condition.

\begin{assumption}
	[Approximation]
	\label{AssumptionApproximation}
	For every $j\in\NN$ and $s\in (0,S)$, for some $S>0$, there exists a $(j-1)$-dimensional linear subspace $V_j\subset H_0$ 
	and a number $\d(j,s)$ such that $\d(j,s)\ra0$ as $j\ra\infty$, and such that
	\begin{align}
	\label{eq:approximation_property}
	\inf_{g \in V_j} \|f - g\|_0 &\lesssim \d(j,s)\, \|f\|_s,\\
	\label{eq:stability_property}
	\|g\|_s &\lesssim \frac{1}{\d(j,s)}\,\|g\|_0,\qquad \forall g\in V_j.
	\end{align}
\end{assumption}

This assumption is also common in the literature on inverse problems. The two inequalities \cref{eq:approximation_property} and
\cref{eq:stability_property} are known as inequalities of Jackson and Bernstein type, respectively,
see, e.g., \cite{canuto2010spectral}. The approximation property \cref{eq:approximation_property} 
shows that `smooth elements' $f\in H_s$ are well approximated in $\|\cdot\|_0$ by their projection onto a finite-dimensional
space $V_j$, with approximation error tending to zero as the dimension of $V_j$ tends to infinity. 
Naturally one expects the numbers $\d(j,s)$ that control the approximation to be decreasing in both
$j$ and $s$. In our examples we shall mostly have polynomial dependence $\d(j,s)=j^{-s/d}$, in the case that $H_0$ consists
of functions on a $d$-dimensional domain. 
The stability property \cref{eq:stability_property} quantifies the smoothness norm of the projections 
in terms of the approximation numbers. Both conditions are assumed up to a maximal
order of smoothness $S>0$, and it follows from \cref{eq:stability_property} that $V_j$ must be contained in the space $H_S$.

The approximation property \cref{eq:approximation_property} 
can also be stated in terms of the `approximation numbers' of the canonical embedding
$\iota: H_s \to H_0$. The $j$th \emph{approximation number} of a general bounded linear operator $T: G\to H$ between
normed spaces is defined as 
\begin{align}
\label{eq:whitenoise_approximation_number}
a_j(T :G \to H)
% = \inf_{U: \text{Rank}\, U < j} \|T - U: H_{s+t} \to H_s\| 
=\inf_{U: \text{Rank} U < j} \sup_{f: \|f\|_ G \leq 1} \|(T - U)f\|_H,
\end{align}
where the infimum is taken over all linear operators $U: G \to H$ of rank less than $j$. 
%(The supremum on the far right is the operator norm of the operator $T-U: G\to H$.)
It is immediate from the definitions that the numbers $\d(j,s)$ in \cref{eq:approximation_property} can be taken equal
to the approximation numbers $a_j(\iota: H_s \to H_0)$. 
The set of approximation numbers $a_j(\iota:H_{s+t} \to H_t)$ of the canonical embedding 
describes many characteristics of the smoothness scale $(H_s)_{s\in\RR}$. 
We give a brief discussion in \Cref{sec:whitenoise_approximation}.

%\Cref{AssumptionApproximation} implies that the canonical embedding $\iota: H_s \to H_0$ is a limit of a sequence of finite rank operators, and hence is compact. 

\begin{example}
	[Sobolev classes]
	\label{exa:whitenoise_sobolev_class}
	The most important examples of smoothness classes satisfying \Cref{cond:smoothness_class_structure} 
	are fractional Sobolev spaces on a bounded domain $\mathcal{D}\subset \RR^d$. 
	For a natural number $s\in\NN$ the Sobolev space of order $s$ can be defined by
	\begin{align*}
	H_s(\mathcal{D})=W^{s,2}(\mathcal{D}):= \Bigl\{f \in \mathscr{D}'(\mathcal{D}): \|f\|_s := \sum_{|\a|\leq s} \|D^\a f\|_{L^2(\mathcal{D})} < \infty \Bigr\}.
	\end{align*}
	Here $\mathscr{D}'(\mathcal{D})$ is the space of generalized functions on $\mathcal{D}$ (distributions), i.e.\ the topological dual space of
	the space $C_c^\infty(\mathcal{D})$ of infinitely differentiable functions with compact support in $\mathcal{D}$;
	the sum ranges over the multi-indices $\a =(\a_1,\cdots,\a_d)\in(\{0\} \cup \NN)^d$ 
	with $|\a| := \sum_{i=1}^{s} \a_i\le s$; and $D^\a$ is the differential operator
	\begin{align*}
	D^\a:=\frac{\partial^{\a_1}\partial^{\a_2}\cdots\partial^{\a_d}}{\partial x_1^{\a_1}x_2^{\a_2}\cdots\partial x_d^{\a_d}}.
	\end{align*}
	The definition can be extended to $s \in \RR\backslash \NN$ in several ways. All constructions are 
	equivalent to the Besov space $B_{2,2}^s(\mathcal{D})$, see
	\cite{triebel2010theory1,triebel2008theory4}.  
	
	It is well known that the approximation numbers of the scale of Sobolev spaces satisfy
	\Cref{AssumptionApproximation} with $\d(j,t)= j^{-t/d}$, see
	\cite{triebel2008distributions_sobolevspaces_ellipticeqs}.
\end{example}

\begin{example}
	[Sequence spaces]
	\label{ExampleSequenceSpaces}
	Suppose $(\phi_i)_{i\in\NN}$ is a given orthonormal sequence in  a given Hilbert space $H$, and $1\le b_i\ua\infty$ is
	a given sequence of numbers. For $s\ge 0$, define $H_s$ as the set of all elements $f=\sum_{i\in \NN} f_i \phi_i\in H$
	with $\sum_{i\in \NN}b_i^{2s}f_i^2<\infty$, equipped with the norm 
	$$\|f\|_s=\Bigl(\sum_{i\in \NN}b_i^{2s}f_i^2\Bigr)^{1/2}.$$
	Then $H_0=H$ is embedded in $H_s$, for every $s>0$, and the norms $\|f\|_s$ are increasing in $s$.
	Every space $H_s$ is a Hilbert space; in fact $H_s$ is isometric to $H_0$ under the map $(f_i)\to (f_ib_i^s)$,
	where we have identified the series with their coefficients for simplicity of notation.
	
	For $s<0$, we equip the elements $f=\sum_{i\in\NN} f_i\phi_i$ of $H$, where $(f_i)\in \ell^2$, 
	with the norm as in the display, which is now automatically finite,
	and next define $H_s$ as the metric completion of $H$ under this norm. The space $H_s$ is isometric
	to the set of all sequences $(f_i)_{i\in\NN}$ with $\sum_{i\in\NN}f_i^2b_i^{2s}<\infty$ equipped with the
	norm given on the right hand side of the preceding display, but the
	series $\sum_{i\in\NN}f_i\phi_i$ may not possess a concrete meaning, for instance as a function if $H$ is 
	a function space.
	
	By Parseval's identity the inner product on $H=H_0$ is given by $\langle f, g\rangle_0=\sum_{i\in\NN}f_ig_i$,
	and the norm duality \cref{EqNormDuality} follows with the help of the Cauchy-Schwarz inequality. 
	
	The natural approximation spaces for use in \Cref{AssumptionApproximation} are $V_j=\mathsf{Span}(\phi_i: i<j)$. 
	Inequalities \eqref{eq:approximation_property}-\eqref{eq:stability_property} are satisfied with the
	approximation numbers taken equal to $\d(j,t)= b_j^{-t}$.
\end{example}

The forward operator $A$ in the model \cref{eq:whitenoise_Y=Af+xi} is a bounded linear operator 
$A: H\to G$ between the separable Hilbert spaces $H$ and $G$, and is assumed
to be smoothing. The following assumption makes this precise.
This assumption is satisfied in many examples and is common in the literature
(for instance \cite{cohen2004adaptive_galerkin,natterer1984hilbertscales,goldenshluger2003inversefunctional_hilbertscales}). 

In \Cref{cond:smoothness_class_structure} the space $H$ is embedded
as $H=H_0$ in the smoothness scale $(H_s)_{s\in\RR}$ and hence has norm $\|\cdot\|_0$. 

\begin{assumption}[Smoothing property of $A$]
	\label{cond:ill-posedness-mild}
	For some $\g>0$ the operator $A: H_{-\g} \to G$ is injective and bounded 
	and, for every $f\in H_0$,
	\begin{align}
	\label{eq:isomorphism_s=0}
	\|A f\| \simeq \|f\|_{-\g}.% \tag{W}
	\end{align}
\end{assumption}

\begin{example}[SVD]
	\label{exa:whitenoise_SVD}
	If the operator $A: H\to G$ is compact, then the positive self-adjoint operator $A^*A: H\to H$ possesses
	a countable orthonormal basis of eigenfunctions $\phi_i$, which can be arranged so that 
	the corresponding sequence of eigenvalues $\l_i$ decreases to zero. If $A$ is injective, then
	all eigenvalues, whose roots are known as the \emph{singular values} of $A$, are strictly positive.
	Suppose that there exists $\g>0$ such that 
	\begin{equation}
	\label{EqSingularValuesArePowersOfb}
	\l_i\simeq i^{-2\g}.
	\end{equation}
	If we construct the smoothness classes $(H_s)_{s\in\RR}$ from the basis
	$(\phi_i)_{i\in\NN}$ and the numbers $b_i=i$ as in \Cref{ExampleSequenceSpaces},
	then \cref {eq:isomorphism_s=0} is satisfied.
	
	Indeed, we can write $A$ in polar decomposition as  $Af= U(A^*A)^{1/2}f$, for a partial isometry $U: \Range(A)\to G$,
	and then have $Af=U\sum_i f_i \sqrt{\l_i}\phi_i$, so that $\|Af\|=\|\sum_i f_i i^{-\g}\phi_i\|_0\simeq \|f\|_{-\g}$.
	
	Thus constructions using the singular value decomposition of $A$ can always be accommodated
	in the more general setup described in the preceding.
\end{example}

\begin{example}[Poisson equation]
	\label{exa:poisson_eq}
	The operator $A: L^2(0,1)\to L^2(0,1)$ defined by the differential equation $(Af)''=f$ with Dirichlet
	boundary conditions $Af(0)=Af(1)=0$ is smoothing with $\g=2$ 
	in the Sobolev scale given in Example~\ref{exa:whitenoise_sobolev_class}.
	This is shown in Sections~10.4 and~11.2 in \cite{haase2014functional}.
	%
	%Let $(H_s)_{s\in\RR}$ be the periodic Sobolev spaces of (generalized) functions satisfying the boundary condition $f(0) = f(1) = 0$. Consider the following boundary problem,
	%\begin{align*} u'':= \frac{d^2 u}{dx^2}  = -f, \qquad f\in H_0,\end{align*}
	%with the Dirichlet boundary condition: $u(0) = u(1) = 0$.  The unique solution $u \in H_2$ is given by
	%\begin{align*}u(x) = Af(x) = \int_0^1 k(x,t) f(t)\, dt,\end{align*}
	%where
	%\begin{align*}k(x,t) = \begin{cases}(1-x)t,\quad \text{if } x\geq t,\\(1-t)x,\quad \text{otherwise}.\end{cases}\end{align*}
	%The operator $A: H_0 \to H_0$ is Hilbert-Schmidt and hence compact, and therefore has no bounded inverse. On the other hand the inverse exists as bounded operator $A^{-1}: H_2 \to H_0$, and is given by $A^{-1}f=-f''$. 
	%
	%When $\mathcal{D}\subset \RR$, the Sobolev norm is equivalent to $\|f\|_2 = \|f\|_0 + \|f''\|_0$(Page 217 in \cite{brezis2010functional}). Since $(Af)'' = -f$, we have
	%$\|f\|_0 \leq \|Af\|_2 = \|Af\|_0 + \|(Af)''\|_0 \leq (\|A\| +1)\|f\|_0$, i.e
	%$\|Af\|_2\simeq_A \|f\|_0$. Since the kernel is symmetric, $A$ is self-adjoint. Besides, $A$ is an isomorphism between $H_2$ and $H_0$ as shown above. Hence
	%\begin{align*}
	%\|A f\|_0 = \sup_{\|h\|_0 \leq 1} |\langle h,A f \rangle_0| = \sup_{\|h\|_0 \leq 1} |\langle Ah, f \rangle_0|
	%\simeq_A \sup_{\|h\|_2 \leq 1} |\langle h, f \rangle_0| = \|f\|_{-2},
	%\end{align*}
	%by norm duality argument, for all $f\in H_0$. This shows that \cref{eq:isomorphism_s=0} holds with $\g = 2$.
\end{example}

\begin{example}[Symm's equation \cite{kirsch2011introduction}]
	\label{exa:symm_eq}
	Consider the Laplace equation $\Delta u = 0$ in a bounded set $\Omega \subset \RR^2$ with boundary condition $u = g$ on the boundary $\partial \Omega$.
	The singular layer potential, a boundary integral
	\begin{align*}
	u(x) = -\frac{1}{\pi} \int_{\partial \Omega} h(y) \ln|x - y|\, ds(y), \quad x\in\Omega,
	\end{align*}
	solves the boundary value problem if and only if the density $h$, belonging to the space $C(\partial \Omega)$ of continuous functions on $\partial \Omega$, solves \textit{Symm's equation}
	\begin{align}
	-\frac{1}{\pi}\int_{\partial\Omega} h(y) \ln|x - y|\, ds(y) = g(x), \quad x\in\partial\Omega.
	\end{align}
	Assume the boundary $\partial\Omega$ has a parametrization of the form $\{\rho(s), s\in[0,2\pi]\}$, for some $2\pi$-periodic analytic function $\rho:[0,2\pi] \to \RR^2$ such that $|\dot{\rho}(s)|>0$ for all $s$. Then Symm's equation takes the following form,
	\begin{align*}
	Af(z):= -\frac{1}{\pi}\int_{0}^{2\pi} \log|\rho(z) - \rho(s) |\, f(s)\, ds = g(\rho(z)), \quad z\in[0,2\pi],
	\end{align*}
	where $f(s) = h(\rho(s))|\dot{\rho}(s)|$. It is shown in Theorem 3.18 of 
	\cite{kirsch2011introduction} that  the operator $A$ satisfies \cref{eq:isomorphism_s=0} with $\g = 1$
	and $(H_s)_{s\in\RR}$ the periodic Sobolev spaces on $[0,2\pi]$.
\end{example}

\begin{example} 
	[Radon transform]
	Inverting the Radon transform was recently studied in the Bayesian framework by \cite{MonardNickelPaternain}, who studied
	the  posterior distribution of smooth functionals for general Gaussian priors, but not the inversion of 
	the whole function. The SVD of the transform is known
	(see \cite{Xu1,Xu2}) and can be used to put the problem in our framework, in the spirit
	of Example~\ref{exa:whitenoise_SVD}. This would give a Bayesian parallel to the rate results in \cite{Silverman}. 
	We do not know if other standard smoothness scales could be used within our framework as well.
\end{example}

\begin{remark}
	\label{RemarkAAnul}
	For all our purposes the smoothing condition \eqref{eq:isomorphism_s=0} can be relaxed to 
	\eqref{EqSmoothingConditionRelaxed}-\eqref{EqSmoothingConditionRelaxedtwo}.
	This relaxation covers the situation where there exists an operator $A_0$ that satisfies
	\eqref{eq:isomorphism_s=0} and is a `version' of $A$ in that the two operators possess a common inverse,
	such as when $A$ and $A_0$ are defined to solve a differential equation with
	different boundary conditions. Lemma~\ref{lem:whitenoise_R_j_estimates_modified} shows that
	the relaxed version of the smoothing condition is then satisfied by the map
	$f\mapsto [Af]$ of $f$ in the class of $Af$ in the quotient space $G/R(A-A_0)$.
\end{remark}

\section{General Result}
\label{sec:whitenoise_general_contraction}
In this section we present a general theorem on posterior contraction. We form the posterior distribution
$\Pi_n(\cdot\given Y^{(n)})$ as in \cref{eq:whitenoise_bayes_formula}, given a prior $\Pi$ on the space $H=H_0$ and 
an observation $Y^{(n)}$, whose conditional distribution given $f$ is determined
by the model \cref{eq:whitenoise_observation_scheme}. We study this random
distribution under the assumption that $Y^{(n)}$ follows the model 
\cref{eq:whitenoise_observation_scheme} for a given `true' function $f=f_0$, which we assume
to be an element of $H_\b$ in a given smoothness scale $(H_s)_{s\in\RR}$,
as in \Cref{cond:smoothness_class_structure}. 

The result is based on an extension of the testing approach of \cite{vandervaart2017fundamentals} 
to the inverse problem \cref{eq:whitenoise_observation_scheme}. It resembles the approach in 
\cite{NicklSohl,Trabs,nickl2018JEMS, ray2013nonconjugate}
or \cite{knapiksalomond}, except that the inverse problem is handled with the help of the Galerkin method, which is a well known strategy
in numerical analysis to solve the operator equation $y = Af$ for $f$, in particular for differential and integral operators. 
The Galerkin method has several variants, which are useful depending on the properties of the operator involved. 
Here we use the least squares method, which is of general application;
for other variants and background, see e.g., \cite{kirsch2011introduction}. 
In \Cref{sec:whitenoise_galerkin_method} we give a self-contained derivation of the necessary inequalities,
exactly in our framework. We note that the Galerkin method only appears as
a tool to state and derive a posterior contraction rate. In our context it does not enter
into the solution of the inverse problem, which is achieved through the Bayesian method.

Let $W_j=AV_j\subset G$ be the image under $A$ of a finite-dimensional approximation space  $V_j$ linked to the 
smoothness scale $(H_s)_{s\in\RR}$ as in \Cref{AssumptionApproximation},
and let $Q_j: G\to W_j$ be the orthogonal projection onto $W_j$. 
If $A: H\to G$ is injective, then $A$ is a bijection between the finite-dimensional
vector spaces $V_j$ and $W_j$, and hence for every $f\in H$ there exists $f^{(j)}\in V_j$ such
that $Af^{(j)}=Q_j Af$. The element $f^{(j)}$ is called the \emph{Galerkin solution} to $Af$ 
in $V_j$. By the projection theorem in Hilbert spaces it is characterized by the property that 
$f^{(j)}\in V_j$ together with the orthogonality relations
\begin{equation}
\label{EqGalerkinOrthogonality}
\langle A f^{(j)}, w\rangle=\langle Af, w\rangle, \qquad w \in W_j.
\end{equation}
The idea of the Galerkin inversion is to project the (complex) object $Af$ onto the
finite-dimensional space $W_j$, and next find the inverse image $f^{(j)}$ of the projection, in the
finite-dimensional space $V_j$, as in the diagram:
$$\begin{matrix}
H_0\ni f&\arrow{r} A& Af\in G\\
&                  &\arrow{d} {Q_j}\\
V_j\ni f^{(j)}&\arrow{l}{A^{-1}}&Q_jAf\in W_j\\
\end{matrix}$$
Clearly the Galerkin solution to an element $f\in V_j$ is $f$ itself, but in general $f^{(j)}$ is
an approximation to $f$, which will be better for increasing $j$, but increasingly complex.
The following theorem uses a dimension $j=j_n$ that balances approximation to complexity,
where the complexity is implicitly determined by a testing criterion.

\begin{theorem}
	\label{thm:whitenoise_general_contraction_rate_subspace}
	For smoothness classes $(H_s)_{s\in\RR}$ as in \Cref{cond:smoothness_class_structure}, 
	assume that $\|Af \|\simeq \|f\|_{-\g}$ for some $\g>0$, 
	and let $f^{(j)}$ denote the Galerkin solution to $Af$ relative to linear subspaces $V_j$ associated to $(H_s)_{s\in\RR}$
	as in \Cref{AssumptionApproximation}. Let $f_0\in H_\b$ for some $\b\in(0,S)$,
	and for $\eta_n\ge \e_n\downarrow 0$ such that $n\e_n^2\ra\infty$,
	and $j_n\in \NN$ such that $j_n\ra\infty$, and some $c>0$, assume
	\begin{align}
	j_n&\le c n\e_n^2,\label{EqjEpsilon}\\
	\eta_n&\ge \frac{\e_n}{\d(j_n,\g)},\label{EqEtaEpsilonDelta}\\
	\eta_n&\ge \d(j_n,\b). \label{EqEtaDelta}%  \|f_0^{(j_n)}-f_0\|_0.
	\end{align}
	Consider prior probability distributions  $\Pi$ on $H_0$ satisfying
	\begin{align}
	\Pi\bigl(f: \|Af-Af_0\|<\e_n\bigr)&\ge e^{-n\e_n^2},\label{EqPriorMass}\\
	\Pi\bigl(f: \|f^{(j_n)}-f\|_0> \eta_n\bigr)&\le e^{-4n\e_n^2}.\label{EqPriorApproximation}
	\end{align}
	Then the posterior distribution in the model \cref{eq:whitenoise_observation_scheme}
	contracts at the rate $\eta_n$ at $f_0$, i.e.\ for a sufficiently large constant $M$ we have 
	$\Pi_n\bigl(f: \|f-f_0\|_0> M\eta_n\given Y^{(n)}\bigr)\ra 0$,
	in probability under the law of $Y^{(n)}$ given by \cref{eq:whitenoise_observation_scheme} with $f=f_0$.
\end{theorem}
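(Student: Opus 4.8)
The plan is to deduce the theorem from the general posterior contraction machinery of \cite{vandervaart2007noniid,vandervaart2017fundamentals}, adapted to the Gaussian white-noise inverse model, by splitting the posterior mass of $\{f:\|f-f_0\|_0>M\eta_n\}$ into three pieces via the Bayes formula \cref{eq:whitenoise_bayes_formula}: the numerator restricted to a suitable sieve where a test exists, the numerator restricted to the complement of the sieve, and the denominator. First I would set up the sieve. The natural choice, dictated by \cref{EqPriorApproximation}, is $\mathcal{F}_n=\{f:\|f^{(j_n)}-f\|_0\le\eta_n\}$, so that $\Pi(\mathcal{F}_n^c)\le e^{-4n\e_n^2}$ kills the second piece once the denominator is bounded below. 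For the denominator I would invoke the Kullback--Leibler/evidence lower bound (the paper's \texttt{Lemma~\ref{LemmaKL}}, i.e.\ the Cameron--Martin form of $p_f^{(n)}$): since under \cref{eq:whitenoise_observation_scheme} the log-likelihood ratio $\log(p_f^{(n)}/p_{f_0}^{(n)})(Y^{(n)})$ has, under $f=f_0$, mean $-\tfrac n2\|Af-Af_0\|^2$ and variance $n\|Af-Af_0\|^2$, the event $\|Af-Af_0\|<\e_n$ forces the log-likelihood ratio to be $\ge -Cn\e_n^2$ with probability tending to one; combined with the prior mass bound \cref{EqPriorMass} this yields, as in the standard argument, $\int p_f^{(n)}(Y^{(n)})\,d\Pi(f)\ge e^{-(C+1)n\e_n^2}\int p_{f_0}^{(n)}(Y^{(n)})\,d\Pi(f)$ off an event of vanishing probability.

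The heart of the matter is the first piece: the existence of a test $\phi_n$ for $H_0:f=f_0$ against $H_1:f\in\mathcal{F}_n,\ \|f-f_0\|_0>M\eta_n$ with $\E_{f_0}\phi_n\to0$ and $\sup_{H_1}\E_f(1-\phi_n)\le e^{-Kn\e_n^2}$ for a large $K$. This is where the Galerkin method enters. On the sieve $\mathcal{F}_n$ I would argue that $\|f-f_0\|_0>M\eta_n$ implies $\|f^{(j_n)}-f_0^{(j_n)}\|_0\gtrsim M\eta_n$ (using the triangle inequality and $\|f^{(j_n)}-f\|_0\le\eta_n$, together with $\|f_0^{(j_n)}-f_0\|_0\lesssim\d(j_n,\b)\|f_0\|_\b\lesssim\eta_n$ from the Galerkin approximation estimate and \cref{EqEtaDelta}). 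Since $f^{(j_n)}-f_0^{(j_n)}\in V_{j_n}$, the Bernstein-type stability \cref{eq:stability_property} together with the smoothing isomorphism $\|A\cdot\|\simeq\|\cdot\|_{-\g}$ and the Galerkin inequalities from \Cref{sec:whitenoise_galerkin_method} convert the $\|\cdot\|_0$-separation into a separation in $\|A(\cdot)\|$, or more precisely in $\|Q_{j_n}(Af-Af_0)\|$, of order $\d(j_n,\g)M\eta_n/\text{(const)}\gtrsim M\e_n$ by \cref{EqEtaEpsilonDelta}. I would then take $\phi_n$ to be the likelihood-ratio / linear test based on the finite-dimensional projected observation $Q_{j_n}Y^{(n)}$: this is a Gaussian mean-testing problem in a $\le(j_n-1)$-dimensional space with noise level $n^{-1/2}$, where two points whose means are $\ge\tau$ apart can be separated with both error probabilities $\lesssim e^{-n\tau^2/8}$; a union bound over a minimal net costs a factor exponential in $j_n\lesssim n\e_n^2$, which is absorbed by choosing $M$ (hence $\tau\gtrsim M\e_n$) large enough that $n\tau^2/8 - Cj_n \ge Kn\e_n^2$. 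Assembling the three bounds in the standard way gives $\E_{f_0}\Pi_n(\|f-f_0\|_0>M\eta_n\mid Y^{(n)})\to0$.

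The main obstacle I anticipate is the deterministic Galerkin step: one must show that the least-squares Galerkin projection is well-behaved under \emph{only} the smoothing assumption $\|Af\|\simeq\|f\|_{-\g}$ and the Jackson/Bernstein pair \cref{eq:approximation_property}--\cref{eq:stability_property}, i.e.\ that $\|f-f^{(j)}\|_0$ is controlled by the best approximation error and that the induced map $V_j\to W_j$, $g\mapsto Q_jAg$, is boundedly invertible with the right dependence on $\d(j,\g)$ — this is precisely the content of the self-contained inequalities promised for \Cref{sec:whitenoise_galerkin_method}, and the balancing conditions \cref{EqjEpsilon}--\cref{EqEtaDelta} are exactly what make the net cardinality, the testing exponent, and the approximation bias mutually compatible. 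A secondary (routine) technical point is handling the non-Borel nature of the white noise: one works throughout with the sequence-model reduction $(Y^{(n)}(w_i))$ of \cref{eq:whitenoise_observation_scheme}, on which all likelihood-ratio computations and the test $\phi_n$ depend only on finitely many coordinates, so measurability is not an issue.
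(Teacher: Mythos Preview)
Your proposal is correct and shares the same skeleton as the paper's proof (KL neighbourhood via Lemma~\ref{LemmaKL}, sieve $\mathcal F_n=\{f:\|f^{(j_n)}-f\|_0\le\eta_n\}$ handled by \cref{EqPriorApproximation}, and Galerkin inequalities from \Cref{sec:whitenoise_galerkin_method} to transfer $\|\cdot\|_0$-separation to the image side). The substantive difference is in how the test is built.

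You work in the image space $W_{j_n}$: you reduce $\|f-f_0\|_0>M\eta_n$ on $\mathcal F_n$ to $\|Q_{j_n}(Af-Af_0)\|\gtrsim M\e_n$ via \cref{EqEtaEpsilonDelta} and the dual Bernstein estimate, and then assemble pairwise likelihood-ratio tests over a net in a $(j_n-1)$-dimensional Gaussian location model, paying the covering cost $e^{Cj_n}$ absorbed by \cref{EqjEpsilon}. This is valid (with the usual shell decomposition so the testing exponent grows with the radius). The paper instead constructs a \emph{single} plug-in test directly in $H_0$: it forms the Galerkin reconstruction $R_{j_n}\bar Y_{j_n}=f^{(j_n)}+n^{-1/2}R_{j_n}\bar\xi_{j_n}$ and tests $\t_n=1\{\|R_{j_n}\bar Y_{j_n}-f_0\|_0\ge M_0\eta_n\}$, controlling the Gaussian variable $R_{j_n}\bar\xi_{j_n}$ by Borell's inequality after bounding its weak and strong second moments through the operator-norm estimate $\|R_j\|\lesssim 1/\d(j,\g)$. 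The trade-off: the paper's route avoids any covering or union bound, and the three conditions \cref{EqjEpsilon}--\cref{EqEtaDelta} enter cleanly as the requirements that $n^{-1/2}\|R_{j_n}\bar\xi_{j_n}\|_0$ concentrate below a multiple of $\eta_n$; your route is closer to the generic testing machinery of \cite{vandervaart2017fundamentals} and makes the role of $j_n\lesssim n\e_n^2$ transparent as a complexity bound. Both arrive at the same rate with the same assumptions.
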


\begin{proof}
	The Kullback-Leibler divergence and variation between the distributions of $Y^{(n)}$ under two functions
	$f$ and $f_0$ are given by $n\|Af-Af_0\|^2/2$ and twice this quantity, respectively.
	(At a referee's request, a proof is provided in Lemma~\ref{LemmaKL}.) Therefore the
	neighbourhoods $B_{n,2}(f_0,\e)$ in (8.19) of \cite{vandervaart2017fundamentals} contain the ball $\{f\in H_0:  \|Af-Af_0\|\le \e\}$.
	By assumption \cref{EqPriorMass} this has prior mass at least $e^{-n\e_n^2}$.
	
	Because the quotient of the left sides of \cref{EqPriorMass} and \cref{EqPriorApproximation}
	is $o(e^{-2n\e_n^2})$, the posterior probability of the set $\bigl\{f: \|f^{(j_n)}-f\|_0>\eta_n\bigr\}$ tends to zero, by
	Theorem~8.20 in \cite{vandervaart2017fundamentals}.
	
	By a variation of Theorem~8.22 in \cite{vandervaart2017fundamentals} it is now sufficient
	to show the existence of tests $\t_n$ such that, for some $M>0$,
	$$P_{f_0}^{(n)}\t_n\ra0,\qquad
	\sup_{\substack{f: \|f-f_0\|_0>M\eta_n,\\ \|f^{(j_n)}-f\|_0\le \eta_n}}P_f^{(n)}(1-\t_n)\le e^{-4n\e_n^2}.$$
	Indeed, in the case that the prior mass condition (8.20) in Theorem~8.22 of \cite{vandervaart2017fundamentals} can be strengthened to
	(8.22), as is the case in our setup in view of \cref{EqPriorMass}, it suffices to verify (8.24) only for a single value of $j$. 
	Furthermore, we can apply Theorem~8.22 with the metrics $d_n(f,g)=\|f-g\|_0\,\e_n/\eta_n$ in order
	to reduce the restriction $d_n(\th,\th_{n,0})>M\e_n$ to $\|f-f_0\|_0>M\eta_n$. 
	
	Fix any orthonormal basis $(\bar\psi_i)_{i<j}$ of $W_j=AV_j$ and define
	\begin{align*}\bar Y_j=\sum_{i<j} Y^{(n)}_{\bar\psi_i}\bar\psi_i
	&=\sum_{i<j} \langle Af, \bar\psi_i\rangle\bar\psi_i+\frac1{\sqrt n}\sum_{i<j} \xi_{\bar\psi_i}\bar\psi_i\\
	&= Q_jAf +\frac1{\sqrt n}\bar \xi_{j},
	\end{align*}
	where $\bar \xi_j:=\sum_{i<j} \xi_{\bar\psi_i}\bar\psi_i$. The latter is ``standard normal in the finite-dimensional
	space $W_j$'': because $(\xi_{\bar\psi_i})_{i<j}$ are i.i.d.\ standard normal variables,
	the variable $\langle \bar \xi_j, w\rangle=
	\sum_{i<j}\xi_{\bar\psi_i}\langle \bar\psi_i,w\rangle$ is $N\bigl(0, \|Q_jw\|^2\bigr)$-distributed, 
	for every $w\in G$.  
	
	Let the operator $R_j: G\mapsto V_j$ be defined as $R_j=A^{-1}Q_j$, where $A^{-1}$ is the inverse of $A$, which is 
	well defined on the range $W_j=AV_j$ of $Q_j$. Then by definition $R_jAf$ is equal to the Galerkin solution $f^{(j)}$ to $Af$.
	By the preceding display $R_j \bar Y_j$ is a well-defined Gaussian random element in $V_j$, satisfying
	\begin{equation}
	R_j \bar Y_j=f^{(j)}+\frac1{\sqrt n}R_j \bar \xi_j.
	\label{EqRY}
	\end{equation}
	The variable $R_j \bar \xi_j$ is a Gaussian random element in $V_j$ with strong and weak
	second moments
	\begin{align*}
	\E\bigl\|R_j \bar \xi_j\bigr\|_0^2&\le \|R_j\|^2\E\|\bar\xi_j\|^2
	=\|R_j\|^2\E\sum_{i<j} \xi_{\bar\psi_i}^2=\|R_j\|^2(j-1)\lesssim \frac j{\d(j,\g)^2},\\
	\sup_{\|f\|_0\le 1}\E \langle R_j \bar \xi_j, f\rangle_0^2
	&=\!\sup_{\|f\|_0\le 1}\E \langle \bar \xi_j, R_j^*f\rangle^2
	=\! \sup_{\|f\|_0\le 1}\| Q_jR_j^*f\|^2\le \|R_j^*\|^2\lesssim \frac1{\d(j,\g)^2}.
	\end{align*}
	In both cases the  inequality on $\|R_j\|=\|R_j^*\|$ at the far right side follows from 
	\cref{eq:whitenoise_galerkin_R_j_estimate}.
	
	The first inequality implies that the first moment 
	$\E\bigl\|R_j \bar \xi_j\bigr\|_0$ of the variable $\|R_j\bar\xi_j\|_0$ is bounded
	above by $\sqrt j/\d(j,\g)$. 
	By Borell's inequality (e.g. Lemma~3.1 in \cite{LedouxTalagrand} and subsequent discussion),
	applied to the Gaussian random variable $R_j\bar\xi_j$ in $H_0$, we see that
	there exist positive constants $a$ and $b$ such that, for every $t>0$,
	$$\Pr\Bigl(\|R_j\bar\xi_j\|_0>t+a\frac{\sqrt j}{\d(j,\g)}\Bigr)\le e^{-bt^2\d(j,\g)^2}.$$
	For $t=2\sqrt n \eta_n/\sqrt b$ and $\eta_n$, $\e_n$ and $j_n$ satisfying
	\cref{EqjEpsilon},  \cref{EqEtaEpsilonDelta} and \cref{EqEtaDelta} this yields,
	for some $a_1>0$,
	\begin{equation}
	\label{EqBorellSpecial}
	\Pr\Bigl(\|R_{j_n}\bar\xi_{j_n}\|_0>a_1\sqrt n\eta_n\Bigr)\le e^{-4n\e_n^2}.
	\end{equation}
	We apply this to bound the error probabilities of the tests 
	\begin{equation}
	\label{EqTestTaun} 
	\tau_n=1\bigl\{\|R_{j_n}\bar Y_{j_n}-f_0\|_0\ge M_0 \eta_n\bigr\},
	\end{equation}
	where $M_0$ is a given constant, to be determined.
	
	Under $f_0$, the decomposition \cref{EqRY} is valid with $f=f_0$,
	and hence $R_j\bar Y_j-f_0=n^{-1/2}R_j\bar \xi_j+f_0^{(j)}-f_0$. By the triangle inequality
	it follows that $\t_n=1$ implies that $n^{-1/2}\|R_{j_n}\bar \xi_{j_n}\|_0\ge M_0\eta_n-\|f_0^{(j)}-f_0\|_0$.
	By \cref{eq:whitenoise_galerkin_solution_error_estimate}
	the assumption that  $f_0\in H_\b$ implies that $\|f_0^{(j)}-f_0\|_0\le M_1 \d(j,\b)$, for some $M_1$,
	which at $j=j_n$ is further bounded by $M_1\eta_n$, by assumption \cref{EqEtaDelta}. 
	Hence the probability of an error of the first kind satisfies
	\begin{align*}
	P_{f_0}^{(n)} \tau_n%&\le P_{f_0}^{(n)}\Bigl(\frac1{\sqrt n}\|R_{j_n}\bar\xi_{j_n}\|\ge M_0\eta_n-\|f_0^{(j)}-f_0\|_0\Bigr)\\
	&\le \Pr\Bigl(\frac1{\sqrt n}\|R_{j_n}\bar\xi_{j_n}\|_0\ge (M_0-M_1)\eta_n\Bigr).
	\end{align*}
	For $M_0-M_1>a_1$, the right side is bounded by $e^{-4n\e_n^2}$, by \cref{EqBorellSpecial}.
	
	Under $f$ the decomposition \cref{EqRY} gives that 
	$R_j\bar Y_j-f_0=n^{-1/2}R_j\bar \xi_j+f^{(j)}-f_0$. By the triangle inequality $\t_n=0$ implies
	that $n^{-1/2}\|R_{j_n}\bar\xi_{j_n}\|_0\ge\|f^{(j_n)}-f_0\|_0- M_0\eta_n$. 
	For $f$ such that $\|f-f_0\|_0>M\eta_n$ and $\|f-f^{(j_n)}\|_0\le \eta_n$, we have
	$\|f^{(j_n)}-f_0\|\ge (M-1)\eta_n$. Hence the probability of an error of the second kind satisfies
	\begin{align*}
	P_{f}^{(n)}(1- \tau_n) %&\le P_{f}^{(n)}\Bigl(\frac1{\sqrt n}\|R_{j_n}\bar\xi_{j_n}\|\ge\|f^{(j)}-f_0\|- M_0\eta_n\Bigr)\\
	&\le \Pr\Bigl(\frac1{\sqrt n}\|R_{j_n}\bar\xi_{j_n}\|_0\ge(M-1-M_0)\eta_n\Bigr),
	\end{align*}
	For $M-1-M_0>a_1$, this is bounded by $e^{-4n\e_n^2}$, by \cref{EqBorellSpecial}.
	
	We can first choose $M_0$ large enough so that $M_0-M_1>a_1$,
	and next $M$ large enough so that $M-1-M_0>a_1$, to finish the proof.
\end{proof}

Inequality \cref{EqPriorMass} is the usual \emph{prior mass condition} for the `direct problem' of
estimating $Af$  (see \cite{vandervaart2000posterior}). It determines the rate of contraction $\e_n$ of the posterior
distribution of $Af$ to $Af_0$. The rate of contraction $\eta_n$ of the posterior distribution of $f$ is slower
due to the necessity of (implicitly) inverting the operator $A$. The theorem shows that the rate $\eta_n$
depends on the combination of the prior, through \cref{EqPriorApproximation}, and the
inverse problem, through the various approximation rates.

\begin{remark}
	It would be possible to obtain the theorem as a corollary of Theorem 2.1 in
	\cite{knapiksalomond}. We would take the sets ${\mathcal S}_n^c$ in the latter high-level result equal to the sets
	$\{f: \|f^{(j_n)}-f\|_0>\eta_n\}$ appearing in \cref{EqPriorApproximation}. To verify the
	conditions of \cite{knapiksalomond} for this choice, most of the preceding proof would be needed.
	Since the next theorem appears not to be a consequence of this approach, and its proof uses
	the preceding proof, we have given a direct proof instead.
\end{remark}

The theorem applies to a true function $f_0$ that is `smooth' of order $\b$ (i.e., $f_0\in H_\b$).
For a prior that is constructed to give an optimal contraction rate for multiple
values of $\b$ simultaneously, the theorem may not give the best result.
The following theorem refines Theorem~\ref{thm:whitenoise_general_contraction_rate_subspace} by considering a mixture prior of the
form 
\begin{equation}
\label{EqMixturePrior}
\Pi=\int \Pi_\t\,dQ(\t),
\end{equation}
where $\Pi_\t$ is a prior on $H$, for every given `hyperparameter' $\t$ running through some measurable space, 
and $Q$ is a prior on this hyperparameter. The idea is to  \emph{adapt} the prior
to multiple smoothness levels through the hyperparameter $\t$.

\begin{theorem}
	\label{TheoremGeneralAdaptation}
	Consider the setup and assumptions of \Cref{thm:whitenoise_general_contraction_rate_subspace} with
	a prior of the form \cref{EqMixturePrior}. Assume that
	\cref{EqjEpsilon}, \cref{EqEtaEpsilonDelta}, \cref{EqEtaDelta} and \cref{EqPriorMass} hold,
	but replace \cref{EqPriorApproximation} by the pair of conditions, 
	for numbers $\eta_{n,\t}$ and $C>0$ and every $\t$,
	\begin{align}
	\Pi_\t\bigl(f: \|f^{(j_n)}-f\|_0> \eta_{n,\t}\bigr)&\le e^{-4n\e_n^2},\label{EqPriorApproximationExtended}\\
	\Pi_\t\bigl(f: \|f-f_0\|_0<2\eta_{n,\t}\bigr)&\le e^{-4n\e_n^2}, \quad\forall \t \text{ with }\eta_{n,\t}\ge C\eta_n.\label{EqPriorMassRough}
	\end{align}
	Then the posterior distribution in the model \cref{eq:whitenoise_observation_scheme}
	contracts at the rate $\eta_n$ at $f_0$, i.e.\ for a sufficiently large constant $M$ we have 
	$\Pi_n\bigl(f: \|f-f_0\|_0> M\eta_n\given Y^{(n)}\bigr)\ra 0$,
	in probability under the law of $Y^{(n)}$ given by \cref{eq:whitenoise_observation_scheme} with $f=f_0$. 
\end{theorem}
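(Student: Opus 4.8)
The plan is to re-run the proof of \Cref{thm:whitenoise_general_contraction_rate_subspace}. Since \cref{EqjEpsilon}, \cref{EqEtaEpsilonDelta}, \cref{EqEtaDelta} and \cref{EqPriorMass} are retained, everything in that proof that does not invoke \cref{EqPriorApproximation} carries over verbatim: the Kullback--Leibler identity, the resulting evidence lower bound $\int (p_f^{(n)}/p_{f_0}^{(n)})(Y^{(n)})\,d\Pi(f)\ge e^{-3n\e_n^2}$ with $\PP_{f_0}^{(n)}$-probability tending to one, and the Galerkin test $\t_n$ of \cref{EqTestTaun} with a fixed constant $M_0$, which satisfies $P_{f_0}^{(n)}\t_n\le e^{-4n\e_n^2}$ and, by \cref{EqBorellSpecial}, $P_f^{(n)}(1-\t_n)\le e^{-4n\e_n^2}$ for every $f$ with $\|f^{(j_n)}-f_0\|_0\ge (M_0+a_1)\eta_n$. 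What changes is the treatment of the non-approximation set. Put $K:=M_0+a_1$, let $C$ be the constant of \cref{EqPriorMassRough}, taken $\ge K$ (enlarging it if necessary), and split the hyperparameter space into $\mathsf{T}_n:=\{\t:\eta_{n,\t}<C\eta_n\}$ and its complement. Viewing the prior as hierarchical, it is enough to prove that $\Pi_n(\t\notin\mathsf{T}_n\given Y^{(n)})\to0$ and $\Pi_n(f:\|f-f_0\|_0>M\eta_n,\ \t\in\mathsf{T}_n\given Y^{(n)})\to0$ in $\PP_{f_0}^{(n)}$-probability, for $M$ to be fixed large at the end.

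For the first limit, fix $\t\notin\mathsf{T}_n$ and cover $H_0$ by $\{\|f-f_0\|_0<2\eta_{n,\t}\}\cup\{\|f^{(j_n)}-f_0\|_0\ge\eta_{n,\t}\}\cup\{\|f-f^{(j_n)}\|_0\ge\eta_{n,\t}\}$, which is legitimate by the triangle inequality, and split $\int p_f^{(n)}(Y^{(n)})\,d\Pi_\t(f)$ along this cover. Multiply each of the three integrals by $1-\t_n$, divide by $p_{f_0}^{(n)}(Y^{(n)})$ and take $\E_{f_0}$; using Fubini together with $\E_{f_0}[(p_f^{(n)}/p_{f_0}^{(n)})(Y^{(n)})]=1$ and $\E_{f_0}[(1-\t_n)(p_f^{(n)}/p_{f_0}^{(n)})(Y^{(n)})]=P_f^{(n)}(1-\t_n)$, the first contribution is at most $\Pi_\t(\|f-f_0\|_0<2\eta_{n,\t})\le e^{-4n\e_n^2}$ by \cref{EqPriorMassRough}, the third is at most $\Pi_\t(\|f-f^{(j_n)}\|_0\ge\eta_{n,\t})\le e^{-4n\e_n^2}$ by \cref{EqPriorApproximationExtended}, and the second is at most $\sup_{\|f^{(j_n)}-f_0\|_0\ge\eta_{n,\t}}P_f^{(n)}(1-\t_n)\le e^{-4n\e_n^2}$, the last step because $\eta_{n,\t}\ge C\eta_n\ge K\eta_n$ places that set inside the region where $\t_n$ controls the error of the second kind. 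Integrating over $Q$ gives $\E_{f_0}\big[(1-\t_n)\int_{\{\t\notin\mathsf{T}_n\}}\int (p_f^{(n)}/p_{f_0}^{(n)})(Y^{(n)})\,d\Pi_\t(f)\,dQ(\t)\big]\le 3e^{-4n\e_n^2}$, so Markov's inequality bounds this numerator by $e^{-7n\e_n^2/2}$ with probability tending to one; dividing by the evidence lower bound $e^{-3n\e_n^2}$ and adding $P_{f_0}^{(n)}\t_n\to0$ gives $\Pi_n(\t\notin\mathsf{T}_n\given Y^{(n)})\to0$.

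For the second limit, write $\{\|f-f_0\|_0>M\eta_n\}\subset\{\|f^{(j_n)}-f_0\|_0\ge M\eta_n/2\}\cup\{\|f-f^{(j_n)}\|_0\ge M\eta_n/2\}$. The first set is handled by the test $\t_n$ exactly as above once $M/2\ge K$; the second set, intersected with $\{\t\in\mathsf{T}_n\}$, has prior mass $\int_{\mathsf{T}_n}\Pi_\t(\|f-f^{(j_n)}\|_0\ge M\eta_n/2)\,dQ(\t)\le e^{-4n\e_n^2}$ by \cref{EqPriorApproximationExtended}, because $M\eta_n/2\ge C\eta_n>\eta_{n,\t}$ for $\t\in\mathsf{T}_n$ provided $M\ge 2C$. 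The same Markov-plus-evidence argument as above (the second set requiring no test, only $\E_{f_0}[\int_B(p_f^{(n)}/p_{f_0}^{(n)})(Y^{(n)})\,d\Pi]=\Pi(B)$) shows that both contributions vanish. Fixing $M\ge\max\{2C,2K\}$ and collecting the estimates finishes the proof.

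The step I expect to be the crux --- and the reason the result is not a mechanical corollary of \Cref{thm:whitenoise_general_contraction_rate_subspace} --- is the rough regime $\t\notin\mathsf{T}_n$: there one cannot claim that these components carry negligible prior mass, because nothing bounds $\eta_{n,\t}$, hence the size of the region over which $\Pi_\t$ may spread. The resolution is the three-way split above, in which the part near $f_0$ is annihilated by the non-concentration hypothesis \cref{EqPriorMassRough}, the part whose Galerkin solution is far from $f_0$ is annihilated by the test \emph{used at the $\t$-dependent scale $\eta_{n,\t}$} --- which is legitimate precisely because $\eta_{n,\t}\ge C\eta_n$ with $C$ chosen above the test threshold $K$ --- and the part that is a poor Galerkin approximant is annihilated by \cref{EqPriorApproximationExtended}; the remaining care is bookkeeping of exponents, so that the $e^{-4n\e_n^2}$ bounds, degraded to $e^{-7n\e_n^2/2}$ by Markov's inequality, still dominate the evidence lower bound $e^{-3n\e_n^2}$.
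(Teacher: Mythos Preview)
Your proof is correct and uses the same ingredients as the paper's --- the Galerkin test $\tau_n$ from \cref{EqTestTaun}, the prior mass condition \cref{EqPriorMass} for the evidence lower bound, and the pair \cref{EqPriorApproximationExtended}--\cref{EqPriorMassRough} to control the rough components --- but arranged differently. The paper works with the joint parameter $(f,\t)$ and identifies two ``sieve complements'' whose posterior mass vanishes: $\{(f,\t):\|f^{(j_n)}-f\|_0>\eta_{n,\t}\}$ and $\{(f,\t):\eta_{n,\t}\ge C\eta_n,\ \|f-f_0\|_0<2\eta_{n,\t}\}$, then verifies that the test has small type~II error on the remaining set $\{\|f-f_0\|_0>M\eta_n\vee 2\eta_{n,\t},\ \|f^{(j_n)}-f\|_0\le\eta_{n,\t}\}$, using the neat chain $M\eta_n\vee 2\eta_{n,\t}-\eta_{n,\t}-M_0\eta_n\ge(M/2-M_0)\eta_n$. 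You instead split by $\t$-regime first and eliminate the rough regime $\{\eta_{n,\t}\ge C\eta_n\}$ in its entirety via the three-way decomposition; this actually makes explicit the stronger fact (which the paper notes only as a remark after the proof) that $\Pi_n(\t:\eta_{n,\t}\ge C\eta_n\given Y^{(n)})\to 0$. The paper's route is more compact because it leans on the abstract Theorems~8.20 and~8.22 of \cite{vandervaart2017fundamentals}; yours is more self-contained because you carry out the Markov-plus-evidence argument by hand. One minor point: your ``enlarging $C$ if necessary'' is legitimate since \cref{EqPriorMassRough} only becomes weaker as $C$ grows, but it is worth stating this explicitly.
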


\begin{proof}
	We take the parameter of the model as the pair $(f,\t)$, which receives the joint prior
	given by $f\given \t\sim \Pi_\t$ and $\t\sim Q$. With abuse of notation, we denote this prior also by $\Pi$.
	The likelihood still depends on $f$ only, but the joint prior gives rise to a posterior distribution on 
	the pair $(f,\t)$, which we also denote by $\Pi_n(\cdot\given Y^{(n)})$, by a similar abuse of notation.
	
	By \cref{EqMixturePrior} and \cref{EqPriorApproximationExtended}--\cref{EqPriorMassRough},
	\begin{align*}
	\Pi\bigl((f,\t):  \|f^{(j_n)}-f\|_0> \eta_{n,\t}\bigr)
	&%=\int \Pi_\t\bigl((f,\t): \|f^{(j_n)}-f\|_0\ge \eta_{n,\t}\bigr)\,dQ(\t)
	\le e^{-4n\e_n^2},\\
	\Pi\bigl((f,\t): \eta_{n,\t}\ge C\eta_n, \|f-f_0\|_0< 2\eta_{n,\t}\bigr)
	&%=\int \Pi_\t\bigl((f,\t): \eta_{n,\t}\ge c\eta_n, \|f-f_0\|_0\le \eta_{n,\t}\bigr)\,dQ(\t)
	\le e^{-4n\e_n^2}.
	\end{align*}
	In view of \cref{EqPriorMass} and Theorem~8.20 in \cite{vandervaart2017fundamentals}, the posterior probabilities of
	the two sets in the left sides tend to zero. As in the proof of \Cref{thm:whitenoise_general_contraction_rate_subspace},
	we can apply a variation of Theorem~8.22 in \cite{vandervaart2017fundamentals} to see that it is now sufficient
	to show the existence of tests $\t_n$ such that, for some $M\ge 2C$,
	$$P_{f_0}^{(n)}\t_n\ra0,\qquad
	\sup_{\substack{(f,\t):\|f-f_0\|_0>M\eta_n\vee 2\eta_{n,\t},\\ \|f^{(j_n)}-f\|_0\le \eta_{n,\t}}}P_f^{(n)}(1-\t_n)\le e^{-4n\e_n^2}.$$
	%(Note that for a given pair $(f,\t)$ we have $\|f-f_0\|_0>M\eta_n\vee 2\eta_{n,\t}$ if and only if
	%($\eta_{n,\t}<C\eta_n$ and $\|f-f_0\|_0>M\eta_n$) or ($\eta_{n,\t}\ge C\eta_n$ and $\|f-f_0\|_0>M\eta_n$ and $\|f-f_0\|_0>2\eta_N{n,\t}$),
	%where we use $M\ge 2C$ to simplify the first.)
	(Note that $M\eta_n\vee 2\eta_{n,\t}=M\eta_n$ if $\eta_{n,\t}<C\eta_n$ and $M\ge 2C$.)
	We use the tests defined in \cref{EqTestTaun}, as in the proof of \Cref{thm:whitenoise_general_contraction_rate_subspace}.
	The latter proof shows that the tests are consistent. We adapt the bound on the power, as follows.
	
	By the triangle inequality $\t_n=0$ implies that,
	for $(f,\t)$ with $\|f-f_0\|_0>M\eta_n\vee 2\eta_{n,\t}$ and $\|f^{(j_n)}-f\|_0\le \eta_{n,\t}$,
	\begin{align*}
	n^{-1/2}\|R_{j_n}\bar\xi_{j_n}\|_0&\ge\|f^{(j_n)}-f_0\|_0- M_0\eta_n
	\ge \|f-f_0\|_0-\|f^{(j_n)}-f\|_0-M_0\eta_n\\
	&\ge M\eta_n\vee 2\eta_{n,\t}-\eta_{n,\t}-M_0\eta_n\ge (M/2-M_0)\eta_n.\end{align*}
	Hence by \cref{EqBorellSpecial} the probability of an error of the second kind 
	is bounded by $e^{-4n\e_n^2}$, for $M$ sufficiently large that $M/2-M_0>a_1$.
\end{proof}

In a typical application of the preceding theorem the priors $\Pi_\t$ for $\t$ such that $\eta_{n,\t}\ge C\eta_n$
will be the priors on `rough' functions, with `intrinsic' contraction rate $\eta_{n,\t}$
slower than $\eta_n$. These `bad' priors do not destroy the overall contraction rate,
because they put little mass near the true function $f_0$, by
condition \cref{EqPriorMassRough}. It is necessary to address these priors explicitly in the conditions, because 
they will typically fail the approximation condition \cref{EqPriorApproximation}, which must be 
relaxed to \cref{EqPriorApproximationExtended}. A further generalization 
might be to allow the truncation levels $j_n$ to depend on $\t$, but this will not be needed 
for our examples.

Inspection of the proof shows that the posterior probability of the
sets $\{\t: \eta_{n,\t}\gtrsim C\eta_n\}$ tends to zero. This means that the posterior correctly
disposes of the models that are `too rough', for the given true function $f_0$. 
In general there is no similar protection against models that are too smooth, but this
does not affect the contraction rate.

\section{Random Series Priors}
\label{sec:whitenoise_random_series_prior}
Suppose that $\{\phi_i\}_{i\in\NN}$ is an orthonormal basis of $H=H_0$ that gives optimal approximation
relative to the scale of smoothness classes $(H_s)_{s\in\RR}$ in the sense
that the linear spaces $V_j = \mathsf{Span}\{\phi_i\}_{i<j}$ satisfy \Cref{AssumptionApproximation}. 
Consider a prior defined as the law of the random series 
\begin{align}
\label{eq:whitenoise_random_series_prior}
f= \sum_{i=1}^M f_i \phi_i,
\end{align}
where $M$ is a random variable in $\NN$ independent from the independent random variables 
$f_1,f_2,\ldots$ in $\RR$.

\begin{condition}[Random series prior]
	\label{cond:whitenoise_random_series_prior}
	\begin{enumerate}[(i)]
		\item The probability density function $p_M$ of $M$ satisfies, for  some positive constants $b_1,b_2$,
		\begin{align*}
		e^{-b_1 k}\lesssim p_M(k) \lesssim e^{-b_2 k}, \qquad \forall k\in\NN.
		\end{align*}
		\item The variable $f_i$ has density $p(\cdot/\k_i)/\k_i$, for a given probability density
		$p$ on $\RR$ and a constant $\k_i>0$ such that, for some $C>0$ and $w>0$, $\a, \b_0>0$,
		\begin{align}
		\label{eq:whitenoise_random_series_prior_tail_weight}
		p(x) &\gtrsim e^{-C|x|^w},\\
		i^{-\b_0/d} (\log i) ^{-1/w}&\lesssim \k_i \lesssim i^{\a}.
		\label{EqBoundsScalingKappa}
		\end{align}
	\end{enumerate}
\end{condition}

Priors of this type were studied in \cite{rousseau_2013_sieve_priors, ray2013nonconjugate},
and applied to inverse problems in the SVD framework in \cite{ray2013nonconjugate}
(see Section~3.1 of the latter paper for discussion).
For Gaussian variables $f_j$ and degenerate $M$ the series \cref{eq:whitenoise_random_series_prior}
is a Gaussian process, and has been more widely studied, but  we focus here on the non-Gaussian case.
Since the basis $(\phi_i)_{i\in\NN}$ used in the prior is linked to the smoothness class $(H_s)_{s\in\RR}$,
rather than to the operator $A$, the prior is not restricted to the SVD framework. Of course,
in the theorem below we do require the operator to be smoothing in the same smoothness scale,
thus maintaining a link between prior and operator.

The assumption on the density $p_M$ is mild and is satisfied, for instance, by the Poisson distribution. 
The assumption on the density $p$ is mild as well, and is satisfied by many distributions with full support in $\RR$, 
including the Gaussian and Laplace distributions. The parameter $\b_0$ in \cref{EqBoundsScalingKappa}
must be a lower bound on the smoothness of the true parameter
$f_0$. Apart from this, condition \cref{EqBoundsScalingKappa} is also very mild, and allows the
scale parameters $\k_i$ to tend both to zero or to infinity.

The preceding random series prior is not conjugate to the inverse problem \cref{eq:whitenoise_Y=Af+xi}.
In general the resulting posterior distribution will not have a closed form expression,
but must be computed using simulation, such as Markov chain Monte Carlo,
or approximated using an optimisation method, such as variational approximation.
However, the contraction rate of the posterior distribution can be established without the help of
an explicit expression for the posterior distribution, as shown in the following theorem. 

\begin{theorem}[Random Series Prior]
	\label{thm:whitenoise_random_series_prior_mild}
	Let $(\phi_i)_{i\in \NN}$ be an orthonormal basis of $H_0$ such that the spaces
	$V_j = \mathsf{Span}\{\phi_i\}_{i<j}$ satisfy \Cref{AssumptionApproximation} with 
	$\d(j,s) = j^{-s/d}$ relative to smoothness classes $(H_s)_{s\in\RR}$ as in \Cref{cond:smoothness_class_structure}.
	Assume that $\|Af \|\simeq \|f\|_{-\g}$ for some $\g>0$, and let $f_0\in H_\b$ for some $\b\in (0,S)$.
	Then, for the random series prior defined in \cref{eq:whitenoise_random_series_prior} 
	and satisfying \Cref{cond:whitenoise_random_series_prior} with $\b_0 \leq \b$,
	and sufficiently large $\underline M>0$, for $\t=(\b + \g )(1+2\g/d)/(2\b + 2 \g + d)$,
	\begin{align*}
	\Pi_n\Bigl( f: \|f - f_0\|_0 > \underline M  n^{-\b/(2\b + 2 \g + d)} (\log n)^\t \given Y^{(n)}\Bigr) 
	\overset{\mathbb{P}^{(n)}_{f_0} }{\ra} 0.
	\end{align*}
\end{theorem}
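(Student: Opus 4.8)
The plan is to verify the hypotheses of \Cref{TheoremGeneralAdaptation}, viewing the random series prior \cref{eq:whitenoise_random_series_prior} as a mixture over the hyperparameter $\t = M$, i.e. $\Pi = \sum_k p_M(k)\,\Pi_k$ where $\Pi_k$ is the law of $\sum_{i<k+1} f_i\phi_i$ with the $f_i$ as in \Cref{cond:whitenoise_random_series_prior}(ii). I would first choose the key sequences: take $n\e_n^2 \simeq j_n$ with $j_n$ a power of $n$ chosen so that the target rate $\eta_n = n^{-\b/(2\b+2\g+d)}(\log n)^\t$ arises. Concretely, the constraints \cref{EqjEpsilon}--\cref{EqEtaDelta} with $\d(j,s)=j^{-s/d}$ read $j_n \lesssim n\e_n^2$, $\eta_n \ge \e_n j_n^{\g/d}$ and $\eta_n \ge j_n^{-\b/d}$; balancing the last two (ignoring logs) gives $\e_n j_n^{\g/d} \simeq j_n^{-\b/d}$, hence $\e_n \simeq j_n^{-(\b+\g)/d}$, and then $n\e_n^2 \simeq j_n$ forces $j_n \simeq n^{d/(2\b+2\g+d)}$ and $\eta_n \simeq n^{-\b/(2\b+2\g+d)}$, matching the claimed polynomial rate. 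The logarithmic factor $(\log n)^\t$ will be absorbed into $\eta_{n,k}$ and $\eta_n$ as I track constants below.

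Next I would verify the four ``easy'' conditions. The prior mass condition \cref{EqPriorMass} for the direct problem: since $\|Af - Af_0\| \simeq \|f-f_0\|_{-\g}$, it suffices to lower bound $\Pi(\|f-f_0\|_{-\g} < c\e_n)$. Writing $f_0 = \sum f_{0,i}\phi_i$ with $\sum i^{2\b/d} f_{0,i}^2 < \infty$, I would condition on $\{M = k\}$ for $k \simeq j_n$, approximate $f_0$ by its truncation $\sum_{i<k} f_{0,i}\phi_i$ (tail error $\lesssim k^{-\b/d} \lesssim \e_n j_n^{\g/d} \cdot j_n^{-\g/d}$-type bound, small in $\|\cdot\|_{-\g}$), and then require each $f_i$ to be within a small box of $f_{0,i}$; the density lower bound \cref{eq:whitenoise_random_series_prior_tail_weight} $p(x) \gtrsim e^{-C|x|^w}$ together with the lower bound on $\k_i$ in \cref{EqBoundsScalingKappa} gives that this box has probability $\gtrsim e^{-c k \log k}$, and combined with $p_M(k) \gtrsim e^{-b_1 k}$ this yields the required $e^{-n\e_n^2}$ up to the log factor in $\eta_n$ — this is where the exponent $\t$ of $\log n$ enters, through the $(\log i)^{-1/w}$ in the lower bound on $\k_i$ and the $|x|^w$ in $\log(1/p)$. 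For \cref{EqPriorApproximationExtended}, note that on $\{M = k\}$ we have $f \in V_{k+1}$, and the Galerkin solution error $\|f^{(j_n)} - f\|_0$ is controlled: for $k \le j_n$ it is zero, and for $k > j_n$ it is bounded via \cref{eq:whitenoise_galerkin_solution_error_estimate} by $\d(j_n,s)\|f\|_s$-type quantities; I would set $\eta_{n,k}$ accordingly (of order $\eta_n$ for $k \lesssim j_n$, growing polynomially in $k$ otherwise), and use the exponential tail $p_M(k) \lesssim e^{-b_2 k}$ plus Gaussian/subexponential tails of the $f_i$ (from \cref{eq:whitenoise_random_series_prior_tail_weight}, which forces $p$ to have at least exponential-power tails only as a lower bound — so I must also assume or derive an upper tail; here one uses that $\k_i \lesssim i^\a$ and a crude union bound that the event $\{\|f\|_s \text{ large}\}$ has small probability) to get the $e^{-4n\e_n^2}$ bound.

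The main obstacle, and the step I would spend the most care on, is the ``rough prior'' condition \cref{EqPriorMassRough}: for every $k$ with $\eta_{n,k} \ge C\eta_n$ — i.e. the truncation level $k$ is too small to resolve $f_0$ at scale $\eta_n$ — one must show $\Pi_k(\|f - f_0\|_0 < 2\eta_{n,k}) \le e^{-4n\e_n^2}$. The point is that if $k$ is small then $f \in V_{k+1}$ cannot approximate $f_0 \notin V_{k+1}$ well in $\|\cdot\|_0$: specifically $\|f - f_0\|_0 \ge \|(I - P_{k})f_0\|_0$ where $P_k$ is the projection onto $V_k$... but this is not bounded below in general without a lower bound on the ``true'' smoothness, so the correct argument is more subtle. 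The resolution is that for $k$ \emph{too small}, $\eta_{n,k}$ is itself large (polynomial in $n$), so $\{\|f-f_0\|_0 < 2\eta_{n,k}\}$ is a large ball and we instead need a small-ball bound from \emph{above}: the probability that all $k$ coordinates $f_i$ land in a ball of radius $2\eta_{n,k}$ around $f_{0,i}$ is at most $\prod_{i<k}\bigl(2\eta_{n,k}\cdot \|p\|_\infty/\k_i\bigr)$ roughly, which using the lower bound $\k_i \gtrsim i^{-\b_0/d}(\log i)^{-1/w}$ and counting gives something like $e^{-c k\log(\k/\eta_{n,k})}$; one then checks that for the relevant range of $k$ (those with $\eta_{n,k} \ge C\eta_n$, which after unwinding the definition of $\eta_{n,k}$ means $k$ of order at most $j_n$ times logarithmic corrections) this is indeed $\le e^{-4n\e_n^2}$. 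Making the bookkeeping of $\eta_{n,k}$ versus $k$ versus $n\e_n^2 \simeq j_n$ consistent across all three regimes of $k$ — much smaller than $j_n$, comparable to $j_n$, much larger than $j_n$ — while keeping the logarithmic powers matching the stated $\t$, is the technical heart of the proof. Once \cref{EqPriorMassRough} and \cref{EqPriorApproximationExtended} hold for these $\eta_{n,k}$, \Cref{TheoremGeneralAdaptation} delivers contraction at rate $\eta_n = n^{-\b/(2\b+2\g+d)}(\log n)^\t$, which is the assertion.
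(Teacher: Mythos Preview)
Your approach via \Cref{TheoremGeneralAdaptation} is substantially more complicated than needed, and it contains a genuine gap. The paper instead applies the simpler \Cref{thm:whitenoise_general_contraction_rate_subspace} directly, and the key simplification you are missing is this: since the random series prior is supported on $V_{M+1}$, and the Galerkin solution acts as the identity on $V_{j}$, on the event $\{M < j_n\}$ one has $f^{(j_n)} = f$ exactly. Hence for \emph{any} $\eta_n > 0$,
\[
\Pi\bigl(\|f^{(j_n)} - f\|_0 > \eta_n\bigr) \le \Pi(M \ge j_n) \lesssim e^{-b_2' j_n},
\]
by the upper tail of $p_M$ in \Cref{cond:whitenoise_random_series_prior}(i). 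Choosing $j_n$ a large enough multiple of $n\e_n^2$ makes this $\le e^{-4n\e_n^2}$, verifying \cref{EqPriorApproximation} with no need to control $\|f\|_s$ for large $M$, no case analysis in $k$, and no appeal to \cref{EqPriorMassRough} at all. The remaining work is then only the prior mass condition \cref{EqPriorMass}, which the paper handles essentially as you sketch (project $f_0$ onto $V_{i_n}$, use the coordinate-wise density lower bound and $p_M(i_n-1)\gtrsim e^{-b_1 i_n}$), arriving at $\e_n \simeq (\log n/n)^{(\b+\g)/(2\b+2\g+d)}$ and $j_n \simeq n\e_n^2$; the stated logarithmic exponent $\t$ then falls out of \cref{EqEtaEpsilonDelta}, $\eta_n \ge \e_n j_n^{\g/d}$.

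The gap in your route is real: to verify \cref{EqPriorApproximationExtended} for $k > j_n$ you need to bound $\Pi_k(\|f^{(j_n)}-f\|_0 > \eta_{n,k})$, which via \cref{eq:whitenoise_galerkin_solution_error_estimate} requires controlling the tail of $\|f\|_s$ under $\Pi_k$. But \Cref{cond:whitenoise_random_series_prior} gives only a \emph{lower} bound on the density $p$, not an upper tail; you cannot derive the needed concentration without an additional assumption. (You flag this yourself but do not resolve it.) Your discussion of \cref{EqPriorMassRough} is also muddled: the ``rough'' components with $\eta_{n,k} \ge C\eta_n$ are those with $k$ \emph{large}, not small, yet your small-ball argument is phrased for small $k$; and the upper bound $\prod_i (2\eta_{n,k}\|p\|_\infty/\k_i)$ again requires a boundedness assumption on $p$ that is not given. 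All of this is sidestepped by the paper's observation that the Galerkin error vanishes identically on $\{M<j_n\}$.
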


The rate $n^{-\b/(2\b + 2 \g + d)}$ is known to be the minimax rate of estimation of a $\b$-regular function
on a $d$-dimensional domain, in an inverse problem with inverse parameter $\g$ (see, e.g., \cite{cohen2004adaptive_galerkin}).
The assumption that $\d(j,s) = j^{-s/d}$ places the setup of the theorem in this setting,
and hence the rate of contraction obtained in the preceding theorem is the minimax rate
up to a logarithmic factor. The rate is adaptive to the regularity of $\b$ of the
true parameter, which is not used in the construction of the prior, apart from the assumption that
$\b\ge\b_0$.  (See \cite{GhosalLemberandvanderVaart} and 
Chapter~10 in \cite{vandervaart2017fundamentals} for general discussion of adaptation in the Bayesian sense.)

The proof of the theorem is deferred to Section~\ref{SectionProofs}; it will be
based on \Cref{thm:whitenoise_general_contraction_rate_subspace}.
%The theorem can be viewed as an extension of Proposition 3.2 in \cite{ray2013nonconjugate}, in which the prior is constructed in the SVD framework.

\begin{example}
	[Wavelet basis]
	\label{exa:whitenoise_random_series_prior}
	Let $p$ be a standard normal density, $p_M$ a standard Poisson probability mass function,
	and set the scaling parameters $\k_i$ equal to 1 (no scaling). 
	
	Consider an $S$-regular orthonormal wavelet basis $\{\phi_{j,k}\}$ for the 
	space of square-integrable functions on the $d$-dimensional torus $(0,2\pi]^d$.
	We can renumber the index $(j,k)$ into $\NN$ by ordering the basis functions by their multiresolution levels, 
	$2^{jd}+k$, and next construct the random series prior \cref{eq:whitenoise_random_series_prior}. 
	
	An $S$-regular orthonormal wavelet basis is known to correspond to the scale of Sobolev
	spaces up to smoothness level $S$. Therefore, by \Cref{thm:whitenoise_random_series_prior_mild}, 
	the contraction rate of the posterior distribution is 
	$n^{-\b/(2\b + 2 \g + d)}$ times a logarithmic factor whenever the operator is smoothing
	relative to the Sobolev scale and the true function $f_0$ belongs
	to the Sobolev space of order $\b$, for $\b_0\le \b< S$. Thus the posterior distributions are adaptive
	up to a logarithmic factor to the scale of Sobolev spaces of orders between $\b_0$ and $S$.
	
	For increasing $\b\ge S$ the rate given by the theorem still improves. However, the `regularity' $\b$ defined by the
	scale $(H_s)_{s\in\RR}$ may then not coincide with the Sobolev scale.
\end{example}

\section{Hilbert Scales}
\label{SectionHilbertScales}
A \textit{Hilbert scale} is a special type of smoothness scale $(H_s)_{s\in\RR}$, as in \Cref{cond:smoothness_class_structure},
generated by an unbounded operator. Such a scale is particularly useful in connection
to differential operators and Gaussian priors, as considered in the next sections. 
For reference we include a short summary on Hilbert scales, and some examples.
Extended discussions of Hilbert scales in the context of regularization theory can be
found e.g. in Chapter 8 of \cite{engl2000regularization}, and a general treatment of the subject
in \cite{krein1966banachscales}.

A Hilbert scale is generated by an unbounded operator $L: D(L)\subset H_0\to H_0$,
with domain  $D(L)$ such that 
\begin{enumerate}[(i)]
	\item[(a)] $D(L)$ is dense in $H_0$, (i.e.\ `$L$ is densely defined'),
	\item[(b)] $D(L)=D(L^*)$,
	\item[(c)] $\langle Lx,y \rangle = \langle x,Ly \rangle$ for all $x,y \in D(L)$, (i.e.\ `$L$ is symmetric'),
	\item[(d)] $\langle Lx,x \rangle\geq \k\|x\|^2$, for all $x\in D(L)$, and some $\k>0$.
\end{enumerate}
The set $D(L^*)$ in (b) is the domain of the adjoint $L^*$ of $L$, which is \emph{defined} as 
the set of all $y\in H$ such that the map $x\mapsto \langle Lx,y\rangle$ from $D(L)$ to $\RR$ is continuous. 
Thus $D(L^*)$ depends on the domain $D(L)$, which is considered part of the definition of $L$ and
is restricted by (a) only. Together, requirements (b) and (c) are equivalent to the requirement that $L$ 
be \emph{self-adjoint}. The latter is important for the existence of a spectral decomposition, used below.

The domain of the $k$-th power of the operator $L$ is defined, by induction for $k=2,3,\ldots$, as
$D(L^k) = \bigl\{ f\in D(L^{k-1}): Lf \in D(L) \bigr\}$,  (with $L^1=L$).
All powers $L^k$, for $k\in \NN$, are defined on 
\begin{align}
\label{eq:whitenoise_intersection_of_domains_of_L}
H_\infty := \bigcap_{k\in \NN} D(L^k).
\end{align}
It can be shown that $H_\infty$ is dense in $H_0$ (Lemma 8.17 in \cite{engl2000regularization}). 
Next, using spectral theory, fractional powers $L^s$ can be defined as well on the domain $H_\infty$,
for every $s\in \RR$, through integration with respect to the spectral family $(E_\lambda)$ of $L$, i.e.
\begin{align*}
L^s:= \int_\RR \lambda^s\, d E_\lambda = \int_{\k}^{\infty} \lambda^s\, d E_\lambda.
%\quad \text{with } D(L^s) = \left\{ f\in (H_\infty)^*: \|f\|_{H_s}^2 :=\int_\RR \lambda^{2s} d|E_\lambda f|^2 < \infty \right\},
\end{align*} 
This allows to define an inner product on $H_\infty$ by, for $h,g \in H_\infty$ and $s\in \RR$,
\begin{align}
\label{eq:whitenoise_hilbert_scale_innerproduct}
\langle h,g \rangle_s := \langle L^s h, L^s g \rangle.
\end{align}

\begin{definition}[Hilbert scales]
	\label{def:whitenoise_hilbert_scale}
	The Hilbert space $H_s$ is the completion of $H_\infty$ with respect to the norm induced by the
	inner product $\langle \cdot,\cdot \rangle_s$ defined in \cref{eq:whitenoise_hilbert_scale_innerproduct}. 
	The family $(H_s)_{s\in\RR}$ is called the \textit{Hilbert scale generated by} $L$.
\end{definition} 

The following proposition, adapted from Proposition~8.19 in \cite{engl2000regularization},
lists basic properties of Hilbert scales. 

\begin{prop}
	\label{prop:whitenoise_hilbert_scale_properties}
	Let $L$ be a densely defined unbounded operator satisfying (a)--(d). Then the 
	Hilbert scale $(H_s)_{s\in \RR}$ is a smoothness scale in the sense of \Cref{cond:smoothness_class_structure}, with
	\begin{enumerate}[(i)]
		\item[(i)] $\|f\|_s \leq \k^{s-t}\|f\|_t$, for $f\in H_t$, and $s<t$. 
		\item[(ii)] $\|f\|_s \leq \|f\|_r^{\lambda} \|f\|_t^{1 - \lambda}$, for $\lambda = (t-s)/(t-r)$, and $r < s < t$.
	\end{enumerate}
	Furthermore, for any $s,t\in\RR$ the operator $L^{t-s}$ has a unique extension from $H_\infty$ to 
	a bounded, self-adjoint operator $L^{t-s}: H_t\to H_s$, satisfying
	\begin{enumerate}[(i)]
		\item[(iii)] $\|L^{t-s}f\|_s\simeq\|f\|_t$, for $f\in H_t$.
		\item[(iv)] $L^{t-s} = L^tL^{-s}$.
		\item[(v)] $(L^s)^{-1} = L^{-s}$.
	\end{enumerate}
\end{prop}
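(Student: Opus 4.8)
The plan is to run everything through the spectral theorem for the self-adjoint operator $L$ (self-adjointness being exactly what (b)--(c) provide), reducing each of (i)--(v) and the two axioms of \Cref{cond:smoothness_class_structure} to an elementary inequality or identity for the scalar functions $\lambda\mapsto\lambda^{r}$ on the spectrum of $L$, which by (d) lies in $[\k,\infty)$. First I would fix the spectral resolution $(E_\lambda)$ of $L$ and record that, for $f\in H_\infty$, the inner product \cref{eq:whitenoise_hilbert_scale_innerproduct} gives $\|f\|_s^2=\|L^sf\|_0^2=\int_{\k}^{\infty}\lambda^{2s}\,d\mu_f(\lambda)$, where $\mu_f=\langle E_{(\cdot)}f,f\rangle$ is the finite spectral measure of $f$; moreover $H_\infty$ is invariant under every $L^{r}$ and the functional calculus is multiplicative on it, so $L^{s}L^{t-s}=L^{t}$ on $H_\infty$. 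Hence $\|L^{t-s}f\|_s=\|L^sL^{t-s}f\|_0=\|L^tf\|_0=\|f\|_t$ for $f\in H_\infty$, i.e.\ $L^{t-s}$ is an isometry of $(H_\infty,\|\cdot\|_t)$ onto $(H_\infty,\|\cdot\|_s)$; specialising, $L^{s}$ identifies $H_s$ isometrically with $H_0$ (after extension), which, since the unboundedness of $L$ forces $\dim H_0=\infty$, shows each $H_s$ is an infinite-dimensional separable Hilbert space.

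Next I would verify the two requirements of \Cref{cond:smoothness_class_structure}. Part~(i) of the proposition follows by integrating the pointwise bound $\lambda^{2s}\le\k^{2(s-t)}\lambda^{2t}$, valid on $[\k,\infty)$ since $s-t<0$, against $\mu_f$; this also shows every $\|\cdot\|_t$-Cauchy sequence is $\|\cdot\|_s$-Cauchy, so the identity on $H_\infty$ extends to a continuous embedding $H_t\hookrightarrow H_s$ with dense range (density because $H_\infty\subset H_t$ and $H_\infty$ is dense in $H_s$ by construction). To make the scale genuinely nested I would use the concrete realisation: for $s\ge0$, $H_s=D(L^s)=\{f\in H_0:\int\lambda^{2s}\,d\mu_f<\infty\}$ is already complete and sits inside $H_0$; for $s<0$, one identifies $H_s$ with the dual of $H_{-s}$ via the pivot identification $H_0\cong H_0^{*}$, so that injectivity of the embeddings reduces to the fact that a continuous functional vanishing on the dense subspace $H_\infty$ is zero. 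Inequality~(ii) is H\"older's inequality applied to the spectral integral, using the convex combination $s=\lambda r+(1-\lambda)t$ with weight $\lambda=(t-s)/(t-r)$. For the norm duality \cref{EqNormDuality} I would write, for $f\in H_0$ and $g\in H_s$ with $s\ge0$, the identity $\langle f,g\rangle_0=\langle L^{-s}f,L^{s}g\rangle_0$ (from self-adjointness of the bounded operator $L^{-s}$ on $H_0$ together with $L^{-s}L^{s}=\mathrm{id}$, first on $H_\infty$ and then by continuity), so that Cauchy--Schwarz gives $\langle f,g\rangle_0\le\|L^{-s}f\|_0\|L^sg\|_0=\|f\|_{-s}\|g\|_s$, with equality attained along $g\propto L^{-2s}f$.

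For the ``furthermore'' part, the isometry $L^{t-s}:(H_\infty,\|\cdot\|_t)\to(H_\infty,\|\cdot\|_s)$ already noted has dense domain and dense range --- its range contains $L^{t-s}H_\infty=H_\infty$, because $L^{s-t}$ also preserves $H_\infty$ and $L^{t-s}L^{s-t}=\mathrm{id}$ there --- so it extends uniquely to a surjective isometry $L^{t-s}:H_t\to H_s$, which is exactly (iii) (with $\simeq$ an equality). Identities (iv) $L^{t-s}=L^{t}L^{-s}$ and (v) $(L^s)^{-1}=L^{-s}$ are the scalar identities $\lambda^{t-s}=\lambda^{t}\lambda^{-s}$ and $\lambda^{s}\lambda^{-s}=1$ on $[\k,\infty)$: they hold on the core $H_\infty$ and extend by continuity, using the mapping properties just obtained (the relevant unitaries being $L^{-s}:H_t\to H_{t+s}$ and $L^{t}:H_{t+s}\to H_s$). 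Self-adjointness of the extensions --- understood through the pairing furnished by the pivot space $H_0$ --- is inherited from the symmetry of $L$ on $D(L)\supset H_\infty$ via the same functional calculus.

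I expect the only genuinely delicate point to be the bookkeeping of the density extensions: ensuring that the norms $\|\cdot\|_s$ on the various completions are mutually compatible and that every embedding $H_t\hookrightarrow H_s$ is injective, so that one really obtains a single nested scale rather than an inverse system of abstract completions. The pivot-space realisation above is the device that disposes of this; once it is in place, the rest is the scalar bookkeeping indicated, and the argument is that of Proposition~8.19 in \cite{engl2000regularization}, to which I would refer for the remaining routine verifications.
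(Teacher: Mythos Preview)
The paper does not actually prove this proposition: it is stated as ``adapted from Proposition~8.19 in \cite{engl2000regularization}'' and no argument is given in the text. Your proposal is the standard spectral-calculus proof that one finds in that reference, and you even cite it explicitly at the end; so there is nothing to compare, and your sketch is correct and appropriate.
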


Somewhat abusing notation, we have denoted the extension of $L^{t-s}$ in the proposition using the same symbol $L^{t-s}$.
Taking $s=0$ or $t=0$, we see that $L^s: H_s\to H_0$ and $L^{s}: H_0\to H_{-s}$ are norm isomorphisms, for every $s\in\RR$.
In particular, the unbounded densely defined operator $L: D(L)\subset H_0\to H_0$ that generates the scale can be extended to a
bounded operator $L: H_1\to H_0$, by strengthening the norm on its domain, and also to a bounded operator
$L: H_0\to H_{-1}$, by extending its range space and weakening the norm of its range space.
Moreover, the inverse map is a norm isomorphism $L^{-1}: H_0\to H_1$, and hence is certainly 
bounded as an operator $L^{-1}: H_0\to H_0$.

The eigenvalues of $L^{-1}$ are closely connected to the approximation numbers 
in \Cref{AssumptionApproximation}.

\begin{prop}
	\label{PropositionEigenvaluesApproximationNumbers}
	If $L^{-1}: H_0\to H_0$ is compact with eigenvalues $\l_j\da0$, then
	\Cref{AssumptionApproximation} is satisfied in the Hilbert scale $(H_s)_{s\in\RR}$ generated by $L$,
	with $\d(j,t)\simeq \l_j^{t}$ and $S=\infty$. In fact, there exist linear spaces $V_j$ of dimension $j-1$ such that,
	for $s\ge 0$ and $t\in\RR$,
	\begin{align}
	\label{EqApproximationPropertyExtended}
	\inf_{g \in V_j} \|f - g\|_t &\lesssim \d(j,s)\, \|f\|_{s+t},\\
	\label{EqStabilityPropertyExtended}
	\|g\|_{s+t} &\lesssim \frac{1}{\d(j,s)}\,\|g\|_t,\qquad \forall g\in V_j.
	\end{align}
\end{prop}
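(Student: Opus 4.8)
The plan is to diagonalize the \emph{whole} scale simultaneously by the eigenbasis of $L^{-1}$, which turns the Hilbert scale into the weighted sequence space of Example~\ref{ExampleSequenceSpaces} and reduces the two inequalities to elementary monotonicity estimates. First I would set up the spectral picture. Since $L$ satisfies (a)--(d) it is self-adjoint and bounded below, so $0$ lies outside its spectrum and $L^{-1}\colon H_0\to H_0$ is a bounded, self-adjoint, positive operator; by hypothesis it is moreover compact. Hence there is an orthonormal basis $(e_i)_{i\in\NN}$ of $H_0$ with $L^{-1}e_i=\l_ie_i$ and $\l_i\da 0$, and from (d) one gets $0<\l_i\le\k^{-1}$. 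Each $e_i$ is then an eigenvector of $L$ with eigenvalue $\l_i^{-1}$, so it lies in $D(L^k)$ for every $k$ and hence in $H_\infty$ (see \eqref{eq:whitenoise_intersection_of_domains_of_L}); by the spectral calculus $L^se_i=\l_i^{-s}e_i$ for every $s\in\RR$, so by \eqref{eq:whitenoise_hilbert_scale_innerproduct} the $e_i$ are mutually orthogonal in every $H_s$ with $\|e_i\|_s^2=\l_i^{-2s}$.

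Next I would promote this to an identification of every $H_s$ with a weighted $\ell^2$ space. Using Proposition~\ref{prop:whitenoise_hilbert_scale_properties}, the map $L^{-s}$ extends to a norm isomorphism $L^{-s}\colon H_0\to H_s$, inverse to $L^s\colon H_s\to H_0$, which on $H_0$ acts as $h\mapsto\sum_i\l_i^{s}\langle h,e_i\rangle_0\,e_i$. Consequently, for every $s\in\RR$, $H_s=L^{-s}(H_0)$ is isometric to $\{(f_i)_{i\in\NN}\colon\sum_i\l_i^{-2s}f_i^2<\infty\}$ via $f=\sum_if_ie_i$ with $\|f\|_s^2=\|L^sf\|_0^2=\sum_i\l_i^{-2s}f_i^2$. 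This is the setting of Example~\ref{ExampleSequenceSpaces} with $b_i=\l_i^{-1}$ (up to the harmless normalization $\l_i\le\k^{-1}$), and in particular the $e_i$ are orthogonal in each $H_t$, so the $H_t$-orthogonal projection of $f=\sum_if_ie_i$ onto $\mathsf{Span}(e_1,\dots,e_{j-1})$ is the truncation $\sum_{i<j}f_ie_i$.

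I would then take $V_j=\mathsf{Span}(e_1,\dots,e_{j-1})$, of dimension $j-1$, and set $\d(j,s)=\l_j^s$. For $s\ge0$ and $t\in\RR$, the approximation inequality \eqref{EqApproximationPropertyExtended} follows from
\[\inf_{g\in V_j}\|f-g\|_t^2=\sum_{i\ge j}\l_i^{-2t}f_i^2=\sum_{i\ge j}\l_i^{2s}\,\l_i^{-2(s+t)}f_i^2\le\l_j^{2s}\sum_{i\ge j}\l_i^{-2(s+t)}f_i^2\le\l_j^{2s}\|f\|_{s+t}^2,\]
using $\l_i\le\l_j$ for $i\ge j$ together with $s\ge0$, and the stability inequality \eqref{EqStabilityPropertyExtended} follows from
\[\|g\|_{s+t}^2=\sum_{i<j}\l_i^{-2s}\,\l_i^{-2t}g_i^2\le\l_j^{-2s}\sum_{i<j}\l_i^{-2t}g_i^2=\l_j^{-2s}\|g\|_t^2\]
for $g=\sum_{i<j}g_ie_i\in V_j$, using $\l_i\ge\l_j$ for $i<j$ and $s\ge0$. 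Both hold with constant $1$, so $\d(j,s)\simeq\l_j^s$ as claimed; since $\l_j\ra0$, this tends to $0$ for every $s>0$, and the argument imposes no upper bound on $s$, so $S=\infty$. Specializing to $t=0$ recovers \eqref{eq:approximation_property}--\eqref{eq:stability_property}, i.e.\ \Cref{AssumptionApproximation}.

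The only genuinely delicate point is the simultaneous diagonalization of the second paragraph: one must be certain that the eigenbasis of $L^{-1}$ really generates the completions $H_s$ for \emph{all} real $s$ (in particular for $s<0$, where the series $\sum_if_ie_i$ need carry no pointwise meaning) and that $\|\cdot\|_s$ takes the stated weighted-$\ell^2$ form there, not merely on the dense subspace $H_\infty$. I would handle this by transporting everything through the norm isomorphisms $L^{\pm s}$ of Proposition~\ref{prop:whitenoise_hilbert_scale_properties}, which identify $H_s$ with $L^{-s}(H_0)$ and thereby reduce the claim to the Borel functional calculus of the single compact self-adjoint operator $L^{-1}$ on $H_0$. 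Once that identification is in place, the remaining content is the two elementary monotonicity computations displayed above, and compactness of $L^{-1}$ enters only to guarantee the discrete spectrum that makes the finite-dimensional spaces $V_j$ available in the first place.
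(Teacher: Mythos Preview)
Your proof is correct and follows essentially the same approach as the paper: diagonalize via the eigenbasis of $L^{-1}$, take $V_j$ to be the span of the first $j-1$ eigenfunctions, and verify both inequalities by the elementary monotonicity computations on the weighted coefficient sums. The paper's proof is terser---it simply asserts ``it may be checked that $L^sf=\sum_i f_i\l_i^{-s}\phi_i$ and $\|f\|_s^2=\sum_i f_i^2\l_i^{-2s}$'' and then writes down the same two displayed estimates you give---whereas you spell out more carefully why the eigenbasis diagonalizes every $H_s$ via the isomorphisms $L^{\pm s}$ of Proposition~\ref{prop:whitenoise_hilbert_scale_properties}; this extra care is justified but does not change the argument.
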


\begin{proof}
	Because $L^{-1}: H_0\to H_0$ is compact, there exists an orthonormal basis $(\phi_i)_{i\in \NN}$ of eigenfunctions in $H_0$.
	It may be checked that $f=\sum_{i\in \NN} f_i\phi_i$ has $L^sf=\sum_{i\in \NN} f_i\l_i^{-s}\phi_i$,  and
	square norm $\|f\|_s^2=\sum_{i\in \NN} f_i^2\l_i^{-2s}$, provided the latter series converges. Take $V_j$ equal to the linear span of the
	first $j-1$ eigenfunctions. Then $f-P_jf=\sum_{i\ge j}f_i\phi_i$ and hence
	$\|f-P_jf\|_t^2=\sum_{i\ge j} f_i^2\l_i^{-2t}\le \l_j^{2s}\sum_{i\ge j} f_i^2\l_i^{-2t-2s}\le  \l_j^{2s}\|f\|_{s+t}^2$, for $s,t\ge 0$,
	and for $f\in V_j$ we have $\|f\|_{s+t}^2=\sum_{i< j} f_i^2\l_i^{-2s-2t}\le \l_j^{-2s}\sum_{i\le j} f_i^2\l_i^{-2t}=\l_j^{-2s}\|f\|_{t}^2$.
\end{proof}

The sequence spaces of Example~\ref{ExampleSequenceSpaces} are one class of examples of Hilbert scales, generated by
the operator $L: (f_i)\mapsto (f_ib_i)$. More intricate Hilbert scales arise from (elliptic) differential operators.
These are useful in that they can incorporate boundary conditions, which are then automatically inherited by a Gaussian 
prior attached to such a scale. The following one-dimensional example is simplistic, but illustrative.

\begin{example}[Sobolev scales]
	%(Cf.\ \cite{grubb2010distributions_and_operators}.)
	\label{exa:hs_sobolev_laplacian}
	Consider the one-dimensional negative Laplacian
	$$-\Delta = -\frac{d^2}{d x^2}$$
	as an operator on the space $C_c^\infty (0,1)$ of infinitely often differentiable functions with
	compact support in $(0,1)$, viewed as subset of $L^2(0,1)$, with range space $L^2(0,1)$.  
	On this domain this operator
	is not self-adjoint, but it has a self-adjoint extension (with differentiation interpreted in the
	sense of distributions) to the space of all functions
	$f\in W^{2,2}(0,1)$ satisfying the \emph{Dirichlet boundary condition}
	\begin{align}
	\label{eq:whitenoise_dirichlet_bc}
	f(0) = 0=f(1).
	\end{align}
	(See Theorem 4.23 in \cite{grubb2010distributions_and_operators}.) 
	The eigenfunctions of the Laplacian under the Dirichlet boundary 
	condition are the functions $x\mapsto \sin(j\pi x)$, for  $j\in \NN$, with eigenvalues of
	the order $b_j\asymp j^{-1}$. The corresponding Hilbert
	scale can also be described as the sequence space generated by this orthogonal basis.
	
	Because the Laplacian is a second derivative it is natural to half the scale parameter, or
	equivalently use the root negative Laplacian $L := \sqrt{-\Delta}$ as the generator of the scale
	(where the root is defined through the spectral decomposition). 
	
	The boundary conditions play an important role in defining the scale. Technically they are needed to 
	create a domain on which the operator is self-adjoint. An alternative choice to the Dirichlet is the 
	\emph{Cauchy boundary condition}
	\begin{align*}
	f'(0)=0=f(1).
	\end{align*}
	This leads to the sequence scale generated by the eigenfunctions $x\mapsto\cos ((j-1/2)\pi x)$,
	for $j\in \NN$,  and is different from the Dirichlet scale. 
	Again the eigenvalues of $L^{-1}$ are of the order $j^{-1}$.
	
	Incidentally, it is shown in \cite{Neubauer} that the full Sobolev scale ($s\in\RR$) of Example~\ref{exa:whitenoise_sobolev_class} is 
	not a Hilbert scale for any generating operator $L$. Also in that sense the boundary conditions are essential.
\end{example}

\begin{example}[Abel operator]
	For a given kernel function $K: (0,1)\times(0,1)\to \RR$ and $\a\in(0,1]$, consider the operator 
	$A: L^2(0,1)\to L^2(0,1)$ given by 
	$$Af(x)=\frac1{\Gamma(\a)}\int_0^x (x-s)^{\a-1}K(x,s)f(s)\,ds.$$
	For $K=1$ this gives the classical \emph{Abel operator}.
	Under mild smoothness conditions on $K$, it is shown in \cite{Gorenflo}, Theorem~1, that $A$ is smoothing 
	(i.e.\ \cref{eq:isomorphism_s=0} holds) of order $\g = 1$
	for the Sobolev scale generated by the root negative Laplacian under the Cauchy boundary condition,
	described in Example~\ref{exa:hs_sobolev_laplacian}.
\end{example}

While in the preceding examples the boundary consists of just two points, for 
multi-dimensional domains the boundary is continuous, 
and the restrictions of functions in a smoothness class 
to the boundary form an infinite-dimensional function space.
By choosing an appropriate generating operator, we can construct a Hilbert scale of functions
that automatically satisfy a desired boundary condition.

Consider a  second order elliptic differential operator
$L: D(L)\subset L^2(\mathcal{D})\to L^2(\mathcal{D})$ 
on a bounded domain $\mathcal{D}\subset \RR^d$ with a smooth boundary. 
To generate a Hilbert scale the operator must be self-adjoint, which involves
both the form of the operator and its domain $D(L)$, where 
different domains will lead to different Hilbert scales.
Self-adjointness requires both the structural property 
$\int_ {\mathcal{D}}(Lf) g\,d\l= \int_{\mathcal{D}} f(Lg)\,d\l$ and 
equality of the domains of $L$ and its adjoint $L^*$, where the domain of $L^*$ is 
by definition the set of $g$ such that the left side of the preceding equality is a continuous
function of $f\in D(L)\subset L^2(\mathcal{D})$. The latter implies restrictions on the domain,
which are typically revealed through partial integrations.

One may start from $L$ as an operator on the space of $C^\infty$-functions with support
within $\mathcal{D}$. The closure of this operator (defined by the
closure of its graph $\{(f, Lf): f\in C_c^\infty(\mathcal{D})\}$ 
in $L^2(\mathcal{D})\times L^2(\mathcal{D})$),
is known as the \emph{minimal realization} associated with $L$,  while the \emph{maximal realization}
has domain of definition 
$\bigl\{f \in L^2(\mathcal{D}): \exists u \in L^2(\mathcal{D})\text{ such that } Lf= u\text{   weakly} \bigr\}$.
(See Definitions~4.1-4.2 in \cite{grubb2010distributions_and_operators};  
a self-adjoint operator is always closed, which explains 'minimal'.) Neither of these operators need to be self-adjoint,
but there always exist self-adjoint operators with a domain between these two extremes.
% In this section we also use
%$$W_0^{k,2} (\mathcal{D}) = \bigl\{f \in W^{k,2} (\mathcal{D}): D^\alpha f|_{\partial\mathcal{D} } = 0,|\alpha|\leq k -1 \bigr\}, \qquad k=1,2,\cdots,$$
%which is the closure of $C_c^\infty(\mathcal{D})$ under the $W^{k,2}$-norm.

For example, the minimal domain of the $d$-dimensional Laplacian operator $L=-\Delta$ is given by
$\{f\in  W^{2,2} (\mathcal{D}): f|_{\partial\mathcal{D} }= \nabla f|_{\partial\mathcal{D} }=0\}$ 
(see Theorem 10.19 in \cite{schmuedgen2012unbounded_selfadjoint_operator_on_hilbert_space})
and the maximal domain contains the full Sobolev space $W^{2,2}(\mathcal{D})$ given in 
\Cref{exa:whitenoise_sobolev_class} (see Exercise 10.11 in
\cite{schmuedgen2012unbounded_selfadjoint_operator_on_hilbert_space}).
%$L_{min}$ is too small to be self-adjoint. Indeed, by the Green's identity (only real functions for simplicity)
%\begin{align*}
%\int_{\mathcal{D}} (-\Delta u) v dx 
%= 
%\int_{\mathcal{D}}  u ((-\Delta)^\dagger v) dx  + \int_{\partial\mathcal{D}} u \partial_\nu v - (\partial_\nu u)  v d\sigma,
%\end{align*}
%where $(-\Delta)^\dagger$ is the formal adjoint and $\partial_\nu$ is the directional derivative in
%the direction of outward pointing normal $\nu$ to the surface element $d\sigma$. This implies that
%$D(-\Delta^\dagger)\supseteq W^{2,2}(\mathcal{D})\cap W_0^{1,2}(\mathcal{D})\supsetneq W_0^{2,2}(\mathcal{D})$.
Two possible domains on which $L$ is self-adjoint are 
(see Theorems~10.19 and 10.20 in \cite{schmuedgen2012unbounded_selfadjoint_operator_on_hilbert_space}):
\begin{align*}
\{f \in  W^{2,2}(\mathcal{D}): f|_{\partial\mathcal{D} } =0\}, \\
\{f \in  W^{2,2}(\mathcal{D}): \nabla f|_{\partial\mathcal{D} } =0\}.
\end{align*} 
These correspond to the \emph{Dirichlet} and \emph{Neumann} boundary conditions, respectively.
More sophisticated boundary conditions are possible as well,
see \cite{grubb2010distributions_and_operators,lions1972nonhomo_BVP_application_vol_1}.

In the Bayesian setup we model a function through a prior. When a true function is known to satisfy
certain boundary conditions, as in many problems involving differential forward operators, we 
can incorporate these in the prior by choosing an appropriate generating operator. For an operator $A$
defined in terms of the Laplacian and the same boundary conditions the smoothing condition 
\cref{eq:isomorphism_s=0} will be satisfied. The following is a another example of a pair
of $L$ and $A$.

%It is reasonable to consider Sobolev scales of functions that satisfy boundary conditions, since the
%existence and uniqueness of the solution of the forward problem \cref{eq:whitenoise_discussion_pde}
%is proved by establishing the fact that $L$ is isomorphism between Sobolev spaces satisfying
%boundary conditions, see \cite{lions1972nonhomo_BVP_application_vol_1}. As an immediate consequence,
%the isomorphism of the forward operator $A = L^{-1}$ is clear in the context of the corresponding inverse problem.

\begin{example}
	[Volterra]
	\label{ExampleTwoDimensionalVolterra}
	Consider the operator $A: L^2((0,1)^2)\to L^2((0,1)^2)$ on functions $f: (0,1)^2\to\RR$ 
	on the unit square satisfying the differential equation 
	$$D_{x,y} Af =f,\qquad\qquad D_{x,y}=\frac{\partial^2}{\partial x\partial y}.$$
	We can render the solution of the equation unique by imposing boundary conditions.
	Two solutions are given by 
	\begin{align*}
	Af(x,y)&=\int_0^x\!\!\int_0^y f(s,t)\,ds\,dt,\\
	A_0f(x,y)&=Af(x,y)-\int_0^1 \! Af(x,t)\,dt-\int_0^1 \! Af(s,y)\,ds+\int_0^1\!\!\int_0^1 \! Af(s,t)\,ds\,dt.
	\end{align*}
	The first satisfies the boundary conditions $Af(x,0)=Af(0,y)=0$,
	while the second is obtained from the first by subtracting its projection on the set
	of all functions of the form $(x,y)\mapsto g_1(x)+g_2(y)$, which forms the kernel of the
	differential operator. Other boundary conditions will still give different versions of the operator.
	
	We claim that $A_0$ is smoothing of order $\gamma=1$
	for the Hilbert scale generated by the root $L$ of $D_{x,y}^2$ with Dirichlet boundary condition, 
	while $A$ is smoothing relative to the scale of $L$ combined with Cauchy boundary condition.
	
	The scale under the Dirichlet boundary condition is generated by the orthogonal system
	of eigenfunctions $e_{k,l}: (x,y)\mapsto \sin(k\pi x)\sin(l\pi y)$, for $(k,l)\in\NN^2$, the tensor product
	of the basis of the one-dimensional Dirichlet-Laplacian as in Example~\ref{exa:hs_sobolev_laplacian}, with
	corresponding eigenvalues are $k^2l^2\pi^4$.
	By explicit calculation
	\begin{align*}
	Ae_{k,l}(x,y)&=\frac1{kl\pi^2}\bigl[\cos(k\pi x )\cos(l\pi y )-\cos(k\pi x )-\cos(l\pi y )+1\bigr],\\
	A_0e_{k,l}(x,y)&=\frac1{kl\pi^2}\cos(k\pi x )\cos(l\pi y ).
	\end{align*}
	The functions $(x,y)\mapsto \cos(k\pi x )\cos(l\pi y )$, for $(k,l)\in(\NN\cup\{0\})^2$ form an
	orthogonal basis of $L^2((0,1)^2)$. We conclude that for $f=\sum_{k,l}f_{k,l}e_{k,l}$,
	\begin{align*}
	\|Af\|^2&\simeq \sum_{k,l}\frac{f_{k,l}^2}{k^2l^2}+\sum_k\Bigl(\sum_{l}\frac{f_{k,l}}{kl}\Bigr)^2
	+\sum_l\Bigl(\sum_{k}\frac{f_{k,l}}{kl}\Bigr)^2+\Bigl(\sum_{k,l}\frac{f_{k,l}}{kl}\Bigr)^2,\\
	\|A_0f\|^2&\simeq \sum_{k,l}\frac{f_{k,l}^2}{k^2l^2}\simeq \|f\|_{-1}^2,
	\end{align*}
	where $\|\cdot\|_{-1}$ refers to the scale of $L$ with Dirichlet boundary condition.
	The first equation shows that the operator $A$ is not smoothing in this scale, but in general
	satisfies $\|Af\|\gtrsim \|f\|_{-1}$.
	
	On the other hand,  the Cauchy boundary condition generates the system of
	eigenfunctions $(x,y)\mapsto \cos ((k-1/2)\pi x)\cos ((l-1/2)\pi y)$, for $(k,l)\in\NN^2$. These
	can be seen to be also the eigenfunctions of $A^*A$, and hence the smoothing property of $A$ fits the`
	SVD framework, as in Example~\ref{exa:whitenoise_SVD}.
	
	The two versions $A$ and $A_0$ possess the same inverse operator, namely the differential operator $D_{x,y}$
	used for their definitions. This suggests that 
	from the point of view of reconstructing $f$ in the inverse problem it should not
	matter whether one is provided with a noisy version of either $Af$ or $A_0f$ as input data,
	seemingly contradicting the fact that the operators are smoothing in different scales. This paradox
	may be resolved by considering $A$ or $A_0$ as maps into the quotient space
	$L^2((0,1)^2)/N(D_{x,y})$, where $N$ denotes the kernel of the operator. The map $f\mapsto [Af]=[A_0f]$ into
	the class of $Af$ in this quotient space is injective and can be shown to 
	be appropriately smoothing (see \eqref{EqSmoothingConditionRelaxed}-\eqref{EqSmoothingConditionRelaxedtwo}), and consequently
	both scales can be used with both operators (cf.\ Remark~\ref{RemarkAAnul}).
\end{example}

\section{Gaussian Priors}
\label{sec:whitenoise_gaussian_prior}
If the function $f$ in \cref{eq:whitenoise_Y=Af+xi} is equipped with a Gaussian prior, then
the corresponding posterior distribution will be Gaussian as well. Furthermore, the posterior mean
will then be equal to the solution found by the method of  Tikhonov-type regularization
(see e.g.\, \cite{knapik2011bayesianmild,florens2016regularizing, stuart2010bayesianperspective}).
Although this allows to study the posterior mean and the full posterior distribution by direct methods,
in this section we derive the rate of posterior contraction from the general result
\Cref{thm:whitenoise_general_contraction_rate_subspace}. An advantage of this approach is
that the proof can be extended to mixtures of Gaussian priors, which is
important to obtain optimal recovery rates for true functions of different smoothness levels.
See \Cref{sec:whitenoise_gaussian_mixture_prior}. 

Centred Gaussian distributions on a separable Hilbert space correspond bijectively to covariance operators.
By definition a random variable $F$ with values in $H_0$ is Gaussian if
$\langle F, g\rangle_0$ is normally distributed, for every $g\in H_0$, and it has zero mean if these
variables have zero means. The variances of these variables can then be written as 
$$\E \langle F, g\rangle_0^2=\langle C g,g\rangle_0,$$
for a linear operator $C: H_0\to H_0$, called the \emph{covariance operator}. 
A covariance operator $C$ is necessarily self-adjoint, nonnegative, and
of \emph{trace class}, i.e., $\sum_{i\in\NN} \langle C\phi_i,\phi_i\rangle<\infty$, for some (and then every) orthonormal
basis $(\phi_i)_{i\in\NN}$ of $H_0$; and every operator with these properties generates a Gaussian distribution.

In the setting of a Hilbert scale $(H_s)_{s\in\RR}$ generated by the operator $L$ it is natural to choose a Gaussian prior 
with covariance operator of the form $L^{-2\a}$, for some $\a>0$. If $L^{-1}$ has eigenvalues
$\l_j$, then this operator is of trace class if $\sum_{j\in\NN}\l_j^{-2\a}<\infty$. Thus 
$\a$ must be chosen big enough for the Gaussian prior to exist as a `proper' prior on $H_0$.
For instance, if $\l_j\simeq j^{-1/d}$, then every choice $\a>d/2$ yields a proper prior.

This leads to the following theorem on posterior contraction rates for Gaussian priors, the proof of which is given in Section~\ref{SectionProofs}.

\begin{theorem}[Gaussian Prior]
	\label{thm:whitenoise_gaussian_prior}
	Consider a Hilbert scale $(H_s)_{s\in\RR}$ generated by an operator $L$ as in the preceding such that
	$L^{-1}: H_0\to H_0$ is compact with eigenvalues $\l_j$ satisfying $\l_j\simeq j^{-1/d}$.
	Suppose the operator $A: H_0\to G$ satisfies $\|Af\|\simeq\|f\|_{-\g}$, 
	assume that $f_0 \in H_\b$, for some $\b >0$, and let the prior be zero-mean Gaussian with 
	covariance operator $L^{-2\a}$,  for some $\a>d/2$.
	Then the posterior distribution satisfies, for sufficiently large $M>0$, 
	\begin{align*}
	\Pi_n\Bigl( f: \|f - f_0\|_0 > M n^{-((\a - d/2)\wedge\b)/(2\a+2\g)} \given Y^{(n)}\Bigr) \overset{\mathbb{P}^{(n)}_{f_0} }{\ra} 0.
	\end{align*}
\end{theorem}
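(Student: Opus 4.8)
The plan is to deduce Theorem~\ref{thm:whitenoise_gaussian_prior} from the general contraction result \Cref{thm:whitenoise_general_contraction_rate_subspace} by verifying the two prior conditions \cref{EqPriorMass} and \cref{EqPriorApproximation} for the Gaussian prior with covariance $L^{-2\a}$, with suitable choices of $\e_n$, $\eta_n$ and $j_n$. By \Cref{PropositionEigenvaluesApproximationNumbers} the Hilbert scale generated by $L$ satisfies \Cref{AssumptionApproximation} with $\d(j,s)\simeq \l_j^s\simeq j^{-s/d}$ and $S=\infty$, so the available approximation rates are $\d(j_n,\g)\simeq j_n^{-\g/d}$ and $\d(j_n,\b)\simeq j_n^{-\b/d}$. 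I will first fix the balancing: choose $j_n\simeq n^{d/(2\a+2\g)}$ (equivalently $\d(j_n,\a)^2\simeq 1/n$), let $\e_n^2\simeq j_n/n \simeq n^{-(2\a+2\g-d)/(2\a+2\g)}$ — which makes \cref{EqjEpsilon} hold and $n\e_n^2\to\infty$ since $\a>d/2$ — and set $\eta_n$ equal to the maximum of $\e_n/\d(j_n,\g)\simeq n^{-(\a-d/2)/(2\a+2\g)}$ and $\d(j_n,\b)\simeq n^{-\b/(2\a+2\g)}$, so that \cref{EqEtaEpsilonDelta} and \cref{EqEtaDelta} are satisfied by construction; this $\eta_n$ is exactly the rate $n^{-((\a-d/2)\wedge\b)/(2\a+2\g)}$ claimed.

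Next I would verify the prior-mass condition \cref{EqPriorMass}. Since $\|Af-Af_0\|\simeq\|f-f_0\|_{-\g}$, it suffices to lower-bound $\Pi(\|f-f_0\|_{-\g}<c\,\e_n)$ for the Gaussian law with covariance $L^{-2\a}$. This is a standard small-ball / concentration-function estimate for Gaussian measures: by the usual decomposition (e.g.\ the Cameron--Martin shift argument, as in Chapter~11 of \cite{vandervaart2017fundamentals}), the log prior mass is controlled by $\inf\{\|h\|_{\HH}^2/2: \|h-f_0\|_{-\g}<\e_n/2\} - \log\Pi(\|f\|_{-\g}<\e_n/2)$, where $\HH$ is the reproducing kernel Hilbert space of the prior, namely $H_\a$ with norm $\|L^\a\cdot\|_0$. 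The centred small-ball probability $-\log\Pi(\|f\|_{-\g}<\e)$ is governed by the decay of the eigenvalues of the prior covariance measured in $\|\cdot\|_{-\g}$, i.e.\ $\l_j^{2\a+2\g}\simeq j^{-(2\a+2\g)/d}$, giving $-\log\Pi(\|f\|_{-\g}<\e)\simeq \e^{-d/(\a+\g-d/2)}$ up to constants; and the approximation term, using $f_0\in H_\b$ and truncating at level $j_n$, is bounded by a multiple of $j_n \e_n^2 \lesssim n\e_n^2$ when $\b\le\a$, and is even smaller when $\b>\a$. With the above choice of $\e_n$ both terms are $O(n\e_n^2)$, which yields \cref{EqPriorMass} after absorbing constants (shrinking $c$ and enlarging the constant multiple of $\e_n$ if necessary, which is harmless in the statement of \Cref{thm:whitenoise_general_contraction_rate_subspace}).

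For the approximation condition \cref{EqPriorApproximation} I would use that, under the Gaussian prior, $f^{(j_n)}-f = R_{j_n}Af - f$ is itself a Gaussian element of $H_0$, and bound its $\|\cdot\|_0$-norm. Writing $f^{(j_n)}-f = (P_{j_n}f - f) + (R_{j_n}A f - P_{j_n} f)$, where $P_{j_n}$ is the $H_0$-orthogonal projection onto $V_{j_n}$, the Galerkin estimates of \Cref{sec:whitenoise_galerkin_method} (specifically \cref{eq:whitenoise_galerkin_solution_error_estimate}) control the second term in terms of smoothness norms, so it suffices to show the Gaussian variable $f^{(j_n)}-f$ has small mean norm and weak variance — both of order $\eta_n$ up to constants, using $\a>d/2$ and the eigenvalue decay — and then invoke Borell's inequality exactly as in the proof of \Cref{thm:whitenoise_general_contraction_rate_subspace} to get the exponential bound $e^{-4n\e_n^2}$. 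A slightly cleaner route is to split $f = f_{\le j_n} + f_{>j_n}$ along the eigenbasis: on $V_{j_n}$ the Galerkin solution reproduces the component exactly so $f^{(j_n)}-f$ only involves the tail $f_{>j_n}$ and the Galerkin image of that tail, whose expected squared $\|\cdot\|_0$-norm is $\sum_{i\ge j_n}\l_i^{2\a} \lesssim j_n^{-(2\a-d)/d}\simeq \eta_n^{\,?}$-comparable to $n^{-(2\a-d)/(2\a+2\g)}$, which is $\lesssim n\e_n^2 \cdot$(something going to zero), combined with the operator-norm bound $\|R_{j_n}\|\lesssim 1/\d(j_n,\g)$ from \cref{eq:whitenoise_galerkin_R_j_estimate} to handle the magnified-noise-free deterministic part.

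The main obstacle I expect is the bookkeeping in the prior-mass estimate \cref{EqPriorMass} when $\b$ and $\a$ interact: one must check that the Cameron--Martin approximation term $\inf\{\|h\|_\a^2: \|h-f_0\|_{-\g}<\e_n\}$ does not exceed $n\e_n^2$ in the regime $\b<\a$ (where $f_0\notin H_\a$, so one cannot take $h=f_0$ and must instead use a smoothed/truncated version, trading off the excess Cameron--Martin norm $j_n^{2(\a-\b)/d}\|f_0\|_\b^2$ against the approximation error $j_n^{-2\b/d}\|f_0\|_\b^2 < \e_n$), and that in the regime $\b\ge\a$ the rate is correctly capped at $n^{-(\a-d/2)/(2\a+2\g)}$ because the small-ball (complexity) term, not the bias, then dominates $\eta_n$. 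Getting these two regimes to combine into the single clean exponent $((\a-d/2)\wedge\b)/(2\a+2\g)$, with all the $\lesssim$-constants consistent with the hypotheses of \Cref{thm:whitenoise_general_contraction_rate_subspace}, is where the care is needed; the testing half is automatic since it is entirely inherited from the general theorem.
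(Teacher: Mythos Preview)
Your overall strategy is the same as the paper's: derive the theorem from \Cref{thm:whitenoise_general_contraction_rate_subspace} by verifying \cref{EqPriorMass} via the Gaussian concentration function (small ball plus Cameron--Martin decentering) and \cref{EqPriorApproximation} via a Borell bound on the Gaussian variable $f^{(j_n)}-f$, with $j_n\simeq n^{d/(2\a+2\g)}$. The verification of \cref{EqPriorApproximation} is essentially correct and matches the paper's \Cref{lem:whitenoise_gaussian_prior_approximation}.

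The gap is in your choice of $\e_n$. You fix $\e_n^2\simeq j_n/n$, i.e.\ $\e_n\simeq n^{-(\a+\g-d/2)/(2\a+2\g)}$, independently of $\b$. This works only when $\b\ge \a-d/2$. When $\b<\a-d/2$ the decentering term in the concentration function, not the centered small-ball term, dominates: the optimal truncation of $f_0$ at level $j$ requires $j^{-(\b+\g)/d}\lesssim \e_n$ (so $j\gtrsim \e_n^{-d/(\b+\g)}$), and then $\inf_{h}\|h\|_\a^2\simeq j^{2(\a-\b)/d}\simeq \e_n^{-2(\a-\b)/(\b+\g)}$. With your $\e_n$ this is $n^{2(\a-\b)(\a+\g-d/2)/((\b+\g)(2\a+2\g))}$, and an easy calculation (equality at $\b=\a-d/2$, strict inequality for smaller $\b$) shows this exceeds $n\e_n^2=n^{d/(2\a+2\g)}$ whenever $\b<\a-d/2$. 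So \cref{EqPriorMass} fails for your $\e_n$ in that regime; the remark ``enlarging the constant multiple of $\e_n$ if necessary'' does not help, since the shortfall is polynomial in $n$. (Relatedly, your claim that the approximation term is ``bounded by a multiple of $j_n\e_n^2\lesssim n\e_n^2$'' is not correct: truncating at the Galerkin level $j_n$ does not even bring $P_{j_n}f_0$ within $\e_n$ of $f_0$ in $\|\cdot\|_{-\g}$ when $\b<\a-d/2$.)

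The fix, which is what the paper does, is to let $\e_n$ depend on the regime: take $\e_n\simeq n^{-(\b\wedge(\a-d/2)+\g)/(2\a+2\g)}$, obtained by balancing the two terms of the concentration function separately (cf.\ \Cref{lem:whitenoise_gaussian_prior_concentration}). In the rough regime $\b<\a-d/2$ this larger $\e_n$ makes \cref{EqjEpsilon} strict ($j_n=n^{d/(2\a+2\g)}<n\e_n^2=n^{(2\a-2\b)/(2\a+2\g)}$), and the constraints \cref{EqEtaEpsilonDelta}, \cref{EqEtaDelta} and the Borell bound all still deliver $\eta_n\simeq n^{-\b/(2\a+2\g)}$, so the claimed rate is recovered.
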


If $F$ is distributed according to the prior in the preceding theorem, then $L^sF$ is also zero-mean
Gaussian distributed, with covariance operator $L^{2s-2\a}$, which has eigenvalues
$j^{-(2\a-2s)/d}$. For $s<\a-d/2$, this operator is
of trace class and hence $L^sF$ is a proper random variable in $H_0$. In other words,
the  distribution of $F$ gives probability 1 to $L^{-s}H_0=H_s$, for  every $s<\a-d/2$.
The prior in the preceding theorem can therefore be interpreted as being `almost' of regularity 
$\a-d/2$. The rate $n^{-((\a - d/2)\wedge\b)/(2\a+2\g)}$ is therefore comparable to
the rate obtained in Theorem~3.5 in \cite{ray2013nonconjugate} and Theorem~4.1 in
\cite{knapik2011bayesianmild} (with the scaling parameter fixed to 1), except that the parameter $\a$ 
in the latter references is denoted presently by $\a-d/2$. 

An improvement of the present theorem is that the covariance operator of the Gaussian prior
is not directly linked to the operator $A$, but only weakly so by \cref{eq:isomorphism_s=0}. 
For example, we may construct a prior by a random series (see Theorem I.23 in Appendix I.6,
\cite{vandervaart2017fundamentals}), in any basis corresponding to the smoothness scale.
We illustrate this below by using the wavelet basis for an inverse problem given by a
differential operator, after first noting that the singular value setup is covered as well.

\begin{example}
	[SVD]
	The scale of smoothness classes constructed in \Cref{ExampleSequenceSpaces}
	and \Cref{exa:whitenoise_SVD} is the Hilbert scale attached to
	the operator $L$ given by $Lf=\sum_{i\in\NN} b_if_i\phi_i$ defined on the domain of functions
	$f=\sum_{i\in \NN}f_i\phi_i$, with $\sum_{i\in \NN}b_i^2f_i^2<\infty$. Under assumption
	\eqref{EqSingularValuesArePowersOfb} this operator can also be expressed as $L=(A^*A)^{-1/(2\g)}$,
	and depends on the operator $A$ through its eigenfunctions.
	A Gaussian prior with covariance operator $L^{-2\a}$ corresponds to modelling
	the coefficients $f_i$ relative to the basis $\phi_i$ 
	as independent zero-mean normal variables $F_i$ with variances $b_i^{-2\a}$.
	This follows, because in that case $\E \langle F,g\rangle_0^2=\sum_{i\in \NN}b_i^{-2\a}g_i^2=\langle L^{-2\a}g,g\rangle_0^2$,
	for every $g\in H_0$.
	
	Thus in this case the prior coincides with the ones in the literature studied under the
	SVD framework, e.g. \cite{knapik2011bayesianmild,knapik2013bayesianextreme}. 
	In the present more general setting $L$ need not be directly linked to $A$,
	except that the operator must possess the smoothing property \Cref{cond:ill-posedness-mild}.
\end{example}

\begin{example}
	[Sobolev scales, wavelet prior]
	Let $\{\phi_{j,k}\}_{(j,k)\in \Lambda}$, be an $S$-regular orthonormal wavelet basis in
	$L^2(\mathbb{T})$, on $\mathbb{T}:=(0,2 \pi]$. Let $f_{j,k} = \int_\mathbb{T} f(x) \phi_{j,k}(x)\, dx$ be the wavelet coefficients
	of a function $f$. By Parseval's identity, the map $U: f\mapsto \{f_{j,k}\}$ is
	a unitary operator $U: L^2(\mathbb{T}) \to \ell^2(\Lambda)$. The multiplication operator $m:\{f_{j,k}\}\mapsto \{2^j f_{j,k}\}$ on
	$\ell^2(\Lambda)$ has $s$-th power given by  $m^s :\{f_{j,k}\}\mapsto \{2^{js} f_{j,k}\}$. Then 
	$L:= U^* m U$ has $s$-th power $L^s:= U^* m^s U$ and generates a Hilbert scale $(H_s)_{s\in\RR}$. 
	For $f\in H_s$, we have 
	\begin{align*}
	\|f\|_{H_s(\mathbb{T})}^2 = \sum_{j=0}^\infty 2^{2js} \sum_{k=0}^{2^j-1} f_{j,k}^2.
	\end{align*}
	This norm can be shown to be equivalent to the standard Sobolev norm, for $0 \leq s <S$.
	
	The Gaussian prior with covariance operator $L^{-2\a}$ can be represented by 
	a random series of the form 
	\begin{align*}
	F = \sum_{(j,k) \in \Lambda} F_{j,k} \phi_{j,k},
	\end{align*}
	where $F_{j,k}\sim \mathcal{N}(0, 2^{-2j\a})$ are independent random variables. 
	This prior corresponds to the Hilbert scale, but does not refer to an operator $A$.
	For instance, the eigenbasis of the operator in \Cref{exa:symm_eq} is the Fourier basis 
	(see \cite{kirsch2011introduction}), and not the wavelet basis. Thus 
	we have constructed a Gaussian prior that is not related to the eigenbasis,
	but attains the same contraction rate.
	
	It may be noted that the scale $(H_s)_{s\in\RR}$ is well defined for every $s\in \RR$,
	and with the preceding prior Theorem~\ref{thm:whitenoise_gaussian_prior} is applicable to 
	the full scale, and gives a contraction rate relative to the scale, which is optimal when
	$\b=\a-d/2$. However, the scale agrees with the Sobolev scale only for $\b<S$,
	and hence the optimality is in the Sobolev sense only if $\b<S$. This restriction is typical
	when working with an approximation scheme such as wavelets or splines. One can 
	of course choose a suitably large value of $S$, or may mix over multiple wavelet bases,
	as in the next section.
\end{example} 

As mentioned in \Cref{sec:whitenoise_Introduction}, there are many works on Bayesian inverse
problems with Gaussian priors. The setup of the preceding theorem is similar to
\cite{florens2016regularizing,agapiou_2013_gaussian_linear_inverse}, arguably closer to
\cite{agapiou_2013_gaussian_linear_inverse}. While we mainly treat the white noise case, our results
can be extended to cover the noise structure in \cite{agapiou_2013_gaussian_linear_inverse}, and hence also cover the
model in \cite{florens2016regularizing}. On the other hand, we differ from
\cite{agapiou_2013_gaussian_linear_inverse} in the following sense. First, unlike Assumption 3.1 in
\cite{agapiou_2013_gaussian_linear_inverse}, our characterization of the smoothing property of the
operator $A$, i.e. \Cref{cond:ill-posedness-mild}, is simple, and in principle, our setup can also be
extended to severely ill-posed problems, see \Cref{sec:whitenoise_extension}.
%which cannot be done using the techniques in \cite{agapiou_2013_gaussian_linear_inverse}. 
Second, our proof strategy is different, as we do not use Gaussian conjugacy, which is the main tool in
\cite{agapiou_2013_gaussian_linear_inverse}. This also allows us to obtain posterior
contraction rates for non-conjugate priors in \Cref{sec:whitenoise_random_series_prior},
and for Gaussian mixtures in \Cref{sec:whitenoise_gaussian_mixture_prior}.

\section{Gaussian Mixtures}
\label{sec:whitenoise_gaussian_mixture_prior}

The posterior contraction rate resulting from a zero-mean Gaussian prior with covariance operator $L^{-2\a}$,
as considered in \Cref{sec:whitenoise_gaussian_prior},  is equal to the minimax rate $n^{-\b/(2\b + 2\g +d)}$ 
(see \cite{cohen2004adaptive_galerkin}) only when
$\a - d/2 = \b$, i.e., when the prior smoothness $\a-d/2$ matches the true smoothness $\b$. 
By mixing over Gaussian priors of varying smoothness the minimax rate can often be
obtained simultaneously for a range of values $\b$ (cf.\ \cite{KnapikPTRF}, \cite{vandervaartICM},
\cite{szabo2013empirical}). In this section we consider 
mixtures of the mean-zero Gaussian priors with covariance operators $\t^2L^{-2\a}$ over 
the `hyperparameter' $\t$. Thus the prior $\Pi$ is the distribution of $\t F$, where $F$ is a zero-mean
Gaussian variable in $H_0$ with covariance operator $L^{-2\a}$,
as in \Cref{sec:whitenoise_gaussian_prior}, and $\t$ is an independent scale parameter.
The variable $1/\t^a$ may be taken to possess a Gamma distribution for some given $0<a\le 2$,
or, more generally, should satisfy the following mild condition.

\begin{condition}
	\label{cond:whitenoise_gaussian_mixture}
	The distribution $Q$ of $\t$ has support $[0,\infty)$ and satisfies 
	$$\begin{cases}
	-\log Q\bigl((t,2t)\bigr)\lesssim t^{-2}, & \text{ as } t\da 0,\\
	-\log Q\bigl((t,2t)\bigr)\lesssim t^{d/(\a-d/2)}, & \text{ as } t\ra \infty.
	\end{cases}$$
\end{condition}

\begin{theorem}
	[Gaussian mixture prior]
	\label{thm:whitenoise_gaussian_mixture_prior}
	Consider a Hilbert scale $(H_s)_{s\in\RR}$ generated by an operator $L$ as in the preceding such that
	$L^{-1}: H_0\to H_0$ is compact with eigenvalues $\l_j$ satisfying $\l_j\simeq j^{-1/d}$.
	Suppose the operator $A: H_0\to G$ satisfies $\|Af\|\simeq\|f\|_{-\g}$, 
	assume that $f_0 \in H_\b$, for some $\b \in (0,\a]$, and let the prior be a mixture 
	of the zero-mean Gaussian distributions with covariance operators $\t^2 L^{-2\a}$ 
	over the parameters $\t$ equipped with a prior satisfying \Cref{cond:whitenoise_gaussian_mixture}, for some $\a>d/2$.
	Then the posterior distribution satisfies, for sufficiently large $M>0$, 
	\begin{align*}
	\Pi_n\Bigl( f: \|f - f_0\|_0 > M n^{-\b/(2\b + 2\g + d)} \given Y^{(n)}\Bigr) \overset{\mathbb{P}^{(n)}_{f_0} }{\ra} 0.
	\end{align*}
\end{theorem}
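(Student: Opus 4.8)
The plan is to derive the theorem from the general adaptation result \Cref{TheoremGeneralAdaptation}, with the component priors $\Pi_\t$ equal to the zero-mean Gaussian law with covariance operator $\t^2 L^{-2\a}$ and $Q$ the mixing distribution of $\t$. By \Cref{PropositionEigenvaluesApproximationNumbers} the Hilbert scale satisfies \Cref{AssumptionApproximation} with $\d(j,s)\simeq\l_j^{s}\simeq j^{-s/d}$ and $S=\infty$, so we are in the setting of \Cref{thm:whitenoise_general_contraction_rate_subspace}. I would fix
\[
\e_n\simeq n^{-(\b+\g)/(2\b+2\g+d)},\qquad \eta_n\simeq n^{-\b/(2\b+2\g+d)},\qquad j_n\simeq n\e_n^2\simeq n^{d/(2\b+2\g+d)},
\]
and verify \cref{EqjEpsilon}--\cref{EqEtaDelta}; this is direct once one notes $\e_n/\d(j_n,\g)=\e_n j_n^{\g/d}\simeq\eta_n$ and $\d(j_n,\b)=j_n^{-\b/d}\simeq\eta_n$, so $j_n$ is exactly the dimension balancing the forward rate against the target inverse rate. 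The multiplicative constants in $\e_n$, $\eta_n$ and in $j_n\le cn\e_n^2$ are tuned at the end; enlarging them is harmless, so I carry them implicitly.

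Let $\t_n\simeq n^{(\a-\b-d/2)/(2\b+2\g+d)}$ be the oracle scale, so $\t_n\nu_n\simeq\eta_n$ for $\nu_n:=\d(j_n,\a-d/2)$, and note $\t_n\ra\infty$ if $\b<\a-d/2$, $\t_n\simeq1$ if $\b=\a-d/2$, and $\t_n\ra0$ if $\b>\a-d/2$. For \cref{EqPriorMass} I would bound $\Pi(\,\cdot\,)\ge Q\bigl((\t_n,2\t_n)\bigr)\inf_{\t\in(\t_n,2\t_n)}\Pi_\t(\,\cdot\,)$ and combine two estimates. Since $\|Af-Af_0\|\simeq\|f-f_0\|_{-\g}$, it suffices that $-\log\Pi_\t(\|f-f_0\|_{-\g}<c\e_n)\lesssim n\e_n^2$ for $\t\asymp\t_n$; this follows from the standard Gaussian concentration-function bound, controlling the decentring term $\t^{-2}\inf\{\|h\|_\a^2:\|h-f_0\|_{-\g}<c\e_n\}$ by taking $h$ the spectral truncation of $f_0$ at level $k\simeq\e_n^{-d/(\b+\g)}\simeq j_n$, and the centred small-ball term $-\log\Pi_\t(\|f\|_{-\g}<c\e_n)$ by a small-ball estimate for the weighted chi-square series $\sum_i\t^2\l_i^{2\a+2\g}Z_i^2$ (which has finite mean because $\a+\g>d/2$); a short computation shows both terms are of order $n\e_n^2$ at $\t\asymp\t_n$. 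For the second estimate, $-\log Q\bigl((\t_n,2\t_n)\bigr)\lesssim n\e_n^2$, I would use \Cref{cond:whitenoise_gaussian_mixture}: for $\b>\a-d/2$, $-\log Q\bigl((\t_n,2\t_n)\bigr)\lesssim\t_n^{-2}\simeq n^{(2\b+d-2\a)/(2\b+2\g+d)}$ is $\lesssim n\e_n^2$ precisely because $\b\le\a$; for $\b<\a-d/2$, $-\log Q\bigl((\t_n,2\t_n)\bigr)\lesssim\t_n^{d/(\a-d/2)}=o(n\e_n^2)$ because $\b>0$.

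For the replacement conditions I would set $\eta_{n,\t}:=K\t\nu_n$ for a constant $K$ fixed below. Since $f\mapsto f^{(j_n)}=R_{j_n}Af$ is linear, a draw from $\Pi_\t$ is $\t$ times a draw from $\Pi_1$, so $\|f-f^{(j_n)}\|_0$ and $\|f\|_0$ both scale exactly by $\t$, and \cref{EqPriorApproximationExtended} and \cref{EqPriorMassRough} each reduce to one estimate under $\Pi_1$, uniform in $\t$. By the quasi-optimality of the Galerkin approximation (which rests on \cref{eq:stability_property} and the smoothing property, cf.\ \Cref{sec:whitenoise_galerkin_method}), $\|f-f^{(j_n)}\|_0\lesssim\|f-P_{j_n}f\|_0$ for the spectral projection $P_{j_n}$, so under $\Pi_1$ the variable $\|f-f^{(j_n)}\|_0$ has second moment $\simeq\nu_n^2$ and weak second moment $\lesssim\l_{j_n}^{2\a}\simeq\nu_n^2/j_n$; Borell's inequality (as in the proof of \Cref{thm:whitenoise_general_contraction_rate_subspace}) then gives $\Pi_1(\|f-f^{(j_n)}\|_0>K\nu_n)\le e^{-b(K-c_2)^2 j_n}$ for absolute $b,c_2$, which is $\le e^{-4n\e_n^2}$ once $K$ or the constant $c=j_n/(n\e_n^2)$ is large. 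For \cref{EqPriorMassRough}, needed only when $\eta_{n,\t}\ge C\eta_n$, i.e.\ $\t\gtrsim\t_n$, Anderson's inequality gives $\Pi_\t(\|f-f_0\|_0<2\eta_{n,\t})\le\Pi_\t(\|f\|_0<2K\t\nu_n)=\Pi_1(\|f\|_0<2K\nu_n)$, independently of $\t$, and a small-ball estimate for $\sum_i\l_i^{2\a}Z_i^2$ gives $-\log\Pi_1(\|f\|_0<2K\nu_n)\gtrsim (2K\nu_n)^{-2d/(2\a-d)}\simeq cK^{-2d/(2\a-d)}\,n\e_n^2$, which is $\le e^{-4n\e_n^2}$ once $c$ is large. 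Finally the oracle window $(\t_n,2\t_n)$ used for \cref{EqPriorMass} lies in the range $\eta_{n,\t}<C\eta_n$ (for $C$ large), so the conditions are consistent, and \Cref{TheoremGeneralAdaptation} delivers the rate $\eta_n\simeq n^{-\b/(2\b+2\g+d)}$.

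The main obstacle is the order in which constants are chosen across \cref{EqPriorApproximationExtended}--\cref{EqPriorMassRough}: \cref{EqPriorApproximationExtended} forces $\eta_{n,\t}\gtrsim\t\nu_n$, and one then needs the absolute constant in the Gaussian small-ball exponent to beat the factor $4$ in $e^{-4n\e_n^2}$. This is handled by first enlarging the constants in $\e_n$ and $\eta_n$ (to absorb the implied constants in \cref{EqPriorMass} and in the hyperprior bound, and to satisfy \cref{EqEtaEpsilonDelta}), and then taking $c=j_n/(n\e_n^2)$ large, which multiplies the small-ball exponent by $c$ while only helping \cref{EqEtaDelta} and \cref{EqPriorMassRough}. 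The underlying Gaussian small-ball and concentration-function estimates are standard once $\Pi_\t$ is written as a random series in the eigenbasis of $L^{-1}$, and the matching of the two regimes of \Cref{cond:whitenoise_gaussian_mixture} to the sign of $\a-d/2-\b$, including the endpoints $\b=\a-d/2$ and $\b=\a$, is the only place the hypotheses $\a>d/2$ and $\b\le\a$ are used essentially.
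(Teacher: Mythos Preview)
Your proposal is correct and follows essentially the same route as the paper: apply \Cref{TheoremGeneralAdaptation} with $\e_n\simeq n^{-(\b+\g)/(2\b+2\g+d)}$, $\eta_n\simeq n^{-\b/(2\b+2\g+d)}$, $j_n\simeq n\e_n^2$, the oracle scale $\t_n=n^{(\a-d/2-\b)/(2\b+2\g+d)}$, and $\eta_{n,\t}$ proportional to $\t\nu_n$; verify \cref{EqPriorMass} by restricting to $\t\in(\t_n,2\t_n)$ and combining the Gaussian concentration-function bound with \Cref{cond:whitenoise_gaussian_mixture}, and verify \cref{EqPriorApproximationExtended}--\cref{EqPriorMassRough} via Borell's inequality, Anderson's lemma, and the small-ball exponent $(\t/\e)^{d/(\a-d/2)}$. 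The only cosmetic differences are that the paper packages the three Gaussian estimates as separate lemmas (\Cref{lem:whitenoise_gaussian_mixture_prior_concentration}--\Cref{lem:whitenoise_gaussian_mixture_prior_approximation}) and bounds the moments of $f-f^{(j_n)}$ directly, whereas you first pass to the spectral projection error $\|f-P_{j_n}f\|_0$ via quasi-optimality and apply Borell there; your constant-tuning (fix $K>c_2$, then enlarge $c=j_n/(n\e_n^2)$) is an equivalent reparametrization of the paper's (fix $a_4$ small, then enlarge $a_5$).
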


\noindent
The proof is given in Section~\ref{SectionProofs}.

%%%%%%%%%%%%%%%%%%%%%%%%%%%%%%%%%%%%%%%%
\section{Discussion and Comments}
\label{sec:whitenoise_extension}

In this section we comment on the present setup and discuss directions in which the results in this article can be extended.

\subsubsection*{Coloured Noise}
We have examined the case that the noise $\xi$ in model \cref{eq:whitenoise_Y=Af+xi} is white
noise. Statistical estimation in the case that the noise is a proper centred Gaussian random element 
in $G$,  as studied in \cite{florens2016regularizing},  is easier in terms of minimax rates 
(if in both cases the noise is scaled to the same unit), 
as this would imply that the noise is less variable. By inspection of our proofs one sees that the
concentration inequalities that drive the testing criterion remain valid if the covariance operator of
the noise is bounded above by the identity, as is assumed in 
\cite{bissantz2007inverseregularization,agapiou_2013_gaussian_linear_inverse}. As a consequence, the proof of 
\Cref{thm:whitenoise_general_contraction_rate_subspace} goes through and the theorem remains valid,
as do the corollaries in the later sections. However, for truly coloured noise the result may be suboptimal, as one may expect
a faster posterior contraction rate, which will incorporate the decrease of the noise variance in certain directions.
The methods of the present paper can be adapted to this case 
as long as the covariance operator fits the scale of smoothness classes,  as in \cite{florens2016regularizing}. 
A sharp result in full generality may be difficult to attain, as it will be the outcome of the interaction of the 
directions of decrease in the noise, the true parameter and the prior.

\subsubsection*{Approximation Numbers of Embeddings}

In the corollaries to the main result we have assumed that the approximation numbers $\d(j,s)$ of
the canonical embedding $\iota: H_s \to H_0$ are of polynomial order $j^{-s/d}$. This order matches
the approximation numbers of Sobolev spaces on $d$-dimensional, bounded domains, and seems common.
Other decay rates do arise, e.g., an exponential rate in severely ill-posed problems 
(as in the heat equation considered in \cite{knapik2013bayesianextreme}),
or a  logarithmic rate (as in \cite{castillo2013bvmwhitenoise}). The general \Cref{thm:whitenoise_general_contraction_rate_subspace}
remains valid, but its corollaries must be adapted. For Gaussian priors in logarithmic or exponential scales,
this is relatively straightforward using the general theory of approximation numbers, which relates
these to singular values and metric entropy. See the discussion in \Cref{sec:whitenoise_entropy}.

%%%%%%%%%%%%%%%%%%%%%%%%%%%%%%%%%%%%%%%%
\section{Proofs}
\label{SectionProofs}

\begin{lemma}
	\label{LemmaKL}
	For $\q=(\q_1,\q_2,\ldots)$ let $P_\q$ be the distribution of the random element $(X_1+\q_1,X_2+\q_2,\ldots)$
	in $\RR^\infty$ for $X_1,X_2\ldots$ i.i.d.\ mean-zero normal variables with variance $\s^2$. If $\q\in\ell^2$,
	then $P_\q$ is absolutely continuous relative to $P_0$ with log likelihood
	$$\log \frac{dP_\q}{dP_0}(X_1,X_2,\ldots)=\frac{1}{\s^2}\sum_{i=1}^\infty \q_iX_i-\frac{1}{2\s^2} \sum_{i=1}^\infty\q_i^2$$
	where the first series converges almost surely and in second mean. The expectation and variance
	of minus this variable are $\sum_{i=1}^\infty \q_i^2/(2\s^2)$ and twice this quantity, respectively.
\end{lemma}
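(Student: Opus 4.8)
The plan is to reduce the claim to the finite-dimensional Gaussian shift formula and then pass to the limit by a martingale argument; this is the standard route to the Cameron--Martin formula for the infinite product $P_\theta=\bigotimes_{i\ge1}N(\theta_i,\s^2)$ relative to $P_0=\bigotimes_{i\ge1}N(0,\s^2)$. First I would record the likelihood ratio of the first $n$ coordinates: since the density of $N(\theta_i,\s^2)$ with respect to $N(0,\s^2)$ at $x$ is $\exp(\theta_ix/\s^2-\theta_i^2/(2\s^2))$, the ratio of the laws of $(X_1,\dots,X_n)$ under $P_\theta$ and $P_0$ is
$$L_n=\exp\Bigl(\frac1{\s^2}\sum_{i=1}^n\theta_iX_i-\frac1{2\s^2}\sum_{i=1}^n\theta_i^2\Bigr).$$

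Under $P_0$ the sequence $(L_n)$ is a nonnegative martingale for the filtration generated by $X_1,X_2,\dots$, with $\E_{P_0}L_n=1$. A direct moment generating function computation gives $\E_{P_0}L_n^2=\exp\bigl(\s^{-2}\sum_{i\le n}\theta_i^2\bigr)\le\exp\bigl(\s^{-2}\sum_{i\ge1}\theta_i^2\bigr)<\infty$, where finiteness uses $\theta\in\ell^2$; hence $(L_n)$ is bounded in $L^2(P_0)$, in particular uniformly integrable, so it converges $P_0$-almost surely and in $L^1(P_0)$ to a limit $L_\infty$ with $\E_{P_0}L_\infty=1$. To identify $L_\infty$, note that the partial sums $\sum_{i=1}^n\theta_iX_i$ form an $L^2(P_0)$-bounded martingale (variances $\s^2\sum_{i\le n}\theta_i^2$), hence converge almost surely and in second mean, while the deterministic series $\sum_i\theta_i^2$ converges; continuity of the exponential then yields $L_\infty=\exp\bigl(\s^{-2}\sum_{i=1}^\infty\theta_iX_i-(2\s^2)^{-1}\sum_{i=1}^\infty\theta_i^2\bigr)$.

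Next I would check that $L_\infty$ is the genuine Radon--Nikodym density $dP_\theta/dP_0$. For a cylinder event $B$ depending on finitely many coordinates, $P_\theta(B)=\E_{P_0}[L_n\mathbf 1_B]$ for all large $n$; letting $n\to\infty$ and using $L^1(P_0)$-convergence gives $P_\theta(B)=\E_{P_0}[L_\infty\mathbf 1_B]$. Since the cylinder events form a $\pi$-system generating the product $\s$-field and both $B\mapsto P_\theta(B)$ and $B\mapsto\E_{P_0}[L_\infty\mathbf 1_B]$ are finite measures, the two measures coincide on the whole product $\s$-field. Hence $P_\theta\ll P_0$ with $dP_\theta/dP_0=L_\infty$, and taking logarithms yields the stated expression for the log likelihood, together with the almost sure and second-mean convergence of the first series. (Alternatively one can invoke Kakutani's dichotomy for product measures: the Hellinger affinity of $N(\theta_i,\s^2)$ and $N(0,\s^2)$ equals $\exp(-\theta_i^2/(8\s^2))$, whose infinite product is strictly positive precisely because $\theta\in\ell^2$, which simultaneously gives $P_\theta\sim P_0$ and the product form of the density.)

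Finally, for the moments, I would use that under $P_0$ the series $S:=\sum_i\theta_iX_i$ is an $L^2$-limit of centred Gaussian variables, hence itself centred Gaussian with variance $\s^2\sum_i\theta_i^2$. Therefore minus the log likelihood, namely $-\s^{-2}S+(2\s^2)^{-1}\sum_i\theta_i^2$, has expectation $(2\s^2)^{-1}\sum_i\theta_i^2$ and variance $\s^{-4}\cdot\s^2\sum_i\theta_i^2=\s^{-2}\sum_i\theta_i^2$, which is twice the expectation. The only step that is not pure bookkeeping is the interchange of limit and integral used to identify $L_\infty$ with $dP_\theta/dP_0$ (equivalently, the verification that the limit is a probability density rather than a sub-probability one); this is exactly where the hypothesis $\theta\in\ell^2$ is essential, entering through the $L^2(P_0)$-boundedness, hence uniform integrability, of the martingale $(L_n)$.
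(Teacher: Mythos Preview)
Your proof is correct and follows essentially the same route as the paper's: both establish the finite-dimensional likelihood ratio $L_n=e^{\Lambda_n}$, use the bound $\E_{P_0}L_n^2=\exp\bigl(\s^{-2}\sum_{i\le n}\q_i^2\bigr)$ to obtain uniform integrability and $L^1$-convergence to $L_\infty$, and then identify $L_\infty$ with $dP_\q/dP_0$ by checking agreement on cylinder sets. The only cosmetic differences are that the paper invokes the It\^o--Nisio theorem for almost sure convergence of $\sum_i\q_iX_i$ where you use the $L^2$-bounded martingale convergence theorem, and that you add the Kakutani-dichotomy alternative, which the paper does not mention.
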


\begin{proof}
	That the series converges in $L^2$ is clear from the fact that $\q\in\ell^2$; the almost sure  convergence next follows from the It\^o-Nisio theorem.
	The expectation and variance of the right side are  easy to compute as limits.
	
	Write $\Lambda_\infty$ for the right side of the display, and $\Lambda_n$ for the expression obtained by replacing the infinite
	sums by the sums from 1 to $n$. Thus $\Lambda_n\ra \Lambda_\infty$ almost surely. Since
	$\E_0 e^{2\Lambda_n}= e^{\sum_{i=1}^n\q_i^2/\s^2}$ is uniformly bounded in $n$, it follows that $e^ {\Lambda_n}$
	is uniformly integrable and hence converges in mean to $e^{\Lambda_\infty}$. In particular, the mean of the latter
	variable is 1, the mean of the former variables.
	
	It follows that the Borel measure on $\RR^\infty$  defined by $B\mapsto \E_0 1_B(X)e^{L_\infty}$ is a probability measure.
	For every Borel set $B$ it is the limit of $\E_0 1_B(X)e^{L_n}$, which is $P_\q(B)$ if $B$ depends only on the
	first $n$ coordinates, as $e^{L_n}$ is the density of the distribution of $(X_1+\q_1,\ldots, X_n+\q_n)$ with
	respect to its distribution at $\q=0$.  Since the Borel $\s$-field on $\RR^\infty$ is generated 
	by the algebra of all cylinder sets, it follows that $P_\q$ and the measure $B\mapsto \E_0 1_B(X)e^{L_\infty}$ agree.
\end{proof}

\subsection{Proof of \texorpdfstring{\Cref{thm:whitenoise_random_series_prior_mild}}{Random Series Priors} }
The theorem is a corollary to \Cref{thm:whitenoise_general_contraction_rate_subspace}
and uses arguments as in the proof of Proposition~3.2 in \cite{ray2013nonconjugate}. 

First we determine $\e_n$ to satisfy the prior mass condition \cref{EqPriorMass} of the
direct problem. Let $P_j$ be the projection onto the linear span of the first $j-1$ basis elements
$\phi_i$. By the assumption on $A$ and the triangle inequality, for any $i_n\in\NN$, 
\begin{align}
\|Af-Af_0\|&\lesssim \|f - f_0\|_{-\g} \lesssim \|f - P_{i_n}f_0\|_{-\g} + \|P_{i_n}f_0 - f_0\|_{-\g}\nonumber\\
&\lesssim  \|f - P_{i_n}f_0\|_{-\g} + \d(i_n,\g)\d(i_n,\b)\|f_0\|_\b,
\label{EqApproximateAf}
\end{align}
by  \cref{eq:whitenoise_projection_estimate_-}, if $0\le\b,\g< S$. 
Here $\d(i_n,\g)\d(i_n,\b)=i_n^{-(\g+\b)/d}\simeq \e_n$ if $i_n\simeq \e_n^{-d/(\g+\b)}$. 

By the orthogonality of the basis $(\phi_i)$, the function $\phi_j$ is orthogonal to the
space $V_j$ spanned by $(\phi_i)_{i<j}$. Hence $P_j\phi_j=0$, so that
$\|\phi_j\|_{-\g}\le \d(j,\g)\|\phi_j\|_0\lesssim j^{-\g/d}$, for every $j$,
by \cref{eq:whitenoise_projection_estimate_-}.
Consequently, for $f = \sum_{i=1}^{i_n-1}f_i \phi_i \in V_{i_n}$ and $f_0=\sum_if_{0,i}\phi_i$,
by the triangle inequality,
\begin{align*}
\|f -P_{i_n} f_0\|_{-\g} \lesssim \sum_{i=1}^{i_n-1}|f_i - f_{0,i}| i^{-\g/d}.
\end{align*}
It follows that there exists a constant $a>0$ such that
\begin{align*}
&\Pi \bigl(f: \|f -P_{i_n} f_0\|_{-\g} < a\e\bigr) 
\geq \Pi \Bigl(\bigl((f_i),M\bigr): \sum_{i=1}^{i_n-1}|f_i - f_{0,i}| i^{-\g/d} < \e, M=i_n-1\Bigr)\\
%&\ge \Pi \bigl( |f_i - f_{0,i}| i^{-\g/d} \leq \frac{\e_n}{i_n -1},\ i=1,\cdots, i_n \bigr)\\
&\qquad\ge \prod_{i=1}^{i_n}\Pi \left(f_i:  |f_i - f_{0,i}|  <\frac{\e\, i^{\g/d}}{i_n}\right) \Pi(M=i_n-1)\\
&\qquad\ge \prod_{i=1}^{i_n}\int_0^{\e\, i^{\g/d}/(\k_i i_n)}\! p\Bigl(x+\frac{f_{0,i}}{\k_i}\Bigr)\,dx\   e^{-b_1i_n},
\end{align*}
in view of  \Cref{cond:whitenoise_random_series_prior}. By \cref{eq:whitenoise_random_series_prior_tail_weight}
of the latter assumption, the integral $\int_0^rp(x+\mu)\,dx$ is bounded below by a constant times  $r e^{-C(r+|\mu|)^w}$.
It follows that for $\e$ such that $\e\, i^{\g/d}/(\k_i i_n)\le 1$, for $i\le i_n$,
the preceding display is lower bounded by a multiple of 
$$\e^{i_n} \Bigl[\prod_{i=1}^{i_n}\frac {i^{\g/d}}{\k_i i_n}\Bigr] 
\exp\Bigl[-C\sum_{i=1}^{i_n}\Bigl(1+\frac{|f_{0,i}|}{\k_i}\Bigr)^w \Bigr]\,  e^{-b_1i_n}.$$
By \cref{EqBoundsScalingKappa}, we have $i^{\g/d}/\k_i\gtrsim (1/i)^{\g/d-\a}$, which is bounded
below by 1 if $\g/d-\a\ge0$ and by $(1/i_n)^{\a-\g/d}$ otherwise, and hence always by $(1/i_n)^\a$. This shows that
the first term in square brackets is bounded below by $(a_2/i_n^{\a+1})^{i_n}$, for some $a_2>0$. 
Since $f_0\in H_\b$, by assumption, the norm duality \cref{EqNormDuality} 
gives that $|f_{0,i}| = |\langle f_0,\phi_i \rangle_0|\leq \|f_0\|_\b \|\phi_i\|_{-\b} \lesssim i^{-\b/d}$.
Together with \cref{EqBoundsScalingKappa} this
gives that $|f_{0,i}|/\k_i\lesssim i^{(\b_0-\b)/d}(\log i)^{1/w}\le (\log i)^{1/w}$, whence minus the exponent
in the second term in square brackets is bounded by a multiple of $i_n\bigl(1+(\log i_n)^{1/w}\bigr)^w$.
We conclude that there exists a constant $a_3>0$ such that 
$$\Pi \bigl(f: \|f -P_{i_n} f_0\|_{-\g} < a \e\bigr)\ge \e^{i_n}e^{-a_3i_n\log i_n}  e^{-b_1 i_n},$$
for every $\e>0$ such that $\e\, i^{\g/d}/(\k_i i_n)\le 1$, for every $i\le i_n$.
Since $i^{\g/d}/\k_i\lesssim i^{(\g+\b_0)/d}(\log i)^{1/w}$,  again by \cref{EqBoundsScalingKappa}, 
a sufficient condition for the latter is that $\e\, i_n^{(\g+\b_0)/d}(\log i_n)/i_n\le 1$.

Combining this with \cref{EqApproximateAf}, we see that  \cref{EqPriorMass} is satisfied for
$\e_n$ such that there exists $i_n$ with 
$$i_n^{-(\g+\b)/d}\lesssim \e_n, \qquad i_n\log i_n\lesssim n\e_n^2, \qquad  \e_n i_n^{(\g+\b_0)/d}(\log i_n)\le i_n.$$
This leads to the rates
$$\e_n \simeq (\log n/n)^{(\b + \g)/(2 \b + 2 \g +d)}, \qquad 
i_n\simeq (n/\log n)^{d/(2\b+2\g+d)}.$$ 
(The third requirement is easily satisfied and remains inactive.) We can choose a sufficiently
large proportionality constant in $\simeq$ when defining $\e_n$, so that \cref{EqPriorMass} is satisfied
for $\e_n$, since the left and right sides of \cref{EqPriorMass} are increasing 
and decreasing in $\e_n$, respectively.

Since the Galerkin projection $f^{(j)}$ is equal to $f$ itself if $f\in V_j$, we have that
$\|f^{(j_n)}-f\|_0=0$ for the random series $f=\sum_{i=1}^Mf_i\phi_i$ if $M< j_n$. By (ii) of 
\Cref{cond:whitenoise_random_series_prior} it follows that, for some $b_2'>0$ and every $\eta_n>0$,
$$\Pi\bigl(f: \|f^{(j_n)}-f\|_0>\eta_n\bigr)\le \Pi\bigl(M\ge j_n\bigr)\le e^{-b_2' j_n}.$$
Hence \cref{EqPriorApproximation} is satisfied for $j_n =n \e_n^2/(4b_2')$. 
Thus we choose 
$$j_n\simeq n^{d/(2\b+2\g+d)}(\log n)^{(2\b+2\g)/(2\b+2\g+d)},$$ 
with a sufficiently large constant in $\simeq$.
Then \cref{EqjEpsilon} is satisfied and it remains to solve $\eta_n$ from  \cref{EqEtaEpsilonDelta} and \cref{EqEtaDelta}. 
This leads to the inequalities 
\begin{align*}
\eta_n &\ge  \e_n j_n^{\g/d}\simeq n^{-\b /(2\b + 2 \g + d)} (\log n)^{(1+2\g/d )(\b + \g )/(2\b + 2 \g + d)},\\
\eta_n& \ge j_n^{-\b/d}\simeq n^{-\b/(2\b+2\g+d)}(\log n)^{-\b (2\b+2\g)/((2\b+2\g+d)d)}.
\end{align*}
The rate is the maximum of the rates at the right hand sides, which coincides with the first
rate. This concludes the proof.

%%%%%%%%%%%%%%%%%%%%%%%%%%%%%%%%%%%%%%

\subsection{Proof of \texorpdfstring{\Cref{thm:whitenoise_gaussian_prior}}{Gaussian Priors} }
\label{subsec:whitenoise_proof_gaussian_prior}
The theorem is a corollary to \Cref{thm:whitenoise_general_contraction_rate_subspace}.
The main tasks are to determine $\e_n$ satisfying the prior mass condition \cref{EqPriorMass} of the
direct problem, and next to identify $\eta_n$ from the prior mass condition
\cref{EqPriorApproximation} and the other conditions.

The first task is achieved in the following lemma.

\begin{lemma}
	\label{lem:whitenoise_gaussian_prior_concentration}
	Under the assumptions of \Cref{thm:whitenoise_gaussian_prior}, for $f_0\in H_\b$,
	as $\e\da 0$,
	\begin{equation}
	\label{eq:log:prob}
	\begin{split}
	-\log\Pi\bigl(f: \|Af-Af_0\|<\e\bigr) \lesssim
	\begin{cases}
	\e^{-d/(\a + \g -d/2)},&\text{ if } d/2 <\a \leq \b + d/2,   \\
	\e^{-(2\a - 2 \b)/(\b + \g)},&\text{ if } \a >\b + d/2.
	\end{cases}
	\end{split}
	\end{equation}
\end{lemma}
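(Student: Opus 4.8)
The plan is to pass from $A$-distances to Hilbert-scale distances via the smoothing property, and then to invoke the standard bound on decentred small-ball probabilities in terms of the \emph{concentration function} of a Gaussian measure. Since $\|Af-Af_0\|\simeq\|f-f_0\|_{-\g}$, we have $\{f:\|Af-Af_0\|<\e\}\supseteq\{f:\|f-f_0\|_{-\g}<\e/c\}$ for a fixed constant $c$, so it suffices to bound $-\log\Pi\bigl(f:\|f-f_0\|_{-\g}<\e\bigr)$ from above. Writing $\HH$ for the reproducing kernel Hilbert space of the prior, the concentration-function inequality (see e.g.\ Chapter~11 of \cite{vandervaart2017fundamentals}) gives
$$-\log\Pi\bigl(f:\|f-f_0\|_{-\g}<\e\bigr)\ \lesssim\ \varphi_0(\e)\ +\ \inf_{h\in\HH:\,\|h-f_0\|_{-\g}<\e}\tfrac12\|h\|_{\HH}^2,\qquad \varphi_0(\e):=-\log\Pi\bigl(f:\|f\|_{-\g}<\e\bigr),$$
and the two terms on the right are estimated separately.

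First I would describe $\HH$ and reduce to the sequence representation. Because $L^{-1}$ is compact, there is an orthonormal eigenbasis $(\phi_i)$ of $H_0$ with $L^{-1}\phi_i=\l_i\phi_i$, and in these coordinates the Hilbert scale is a sequence scale with $\|h\|_s^2=\sum_i\l_i^{-2s}h_i^2$ (as in the proof of \Cref{PropositionEigenvaluesApproximationNumbers}). The Gaussian prior with covariance operator $L^{-2\a}$ then has independent coordinates $f_i\sim N(0,\l_i^{2\a})$, and its RKHS is $\HH=H_\a$ with $\|h\|_{\HH}\simeq\|h\|_\a$. In particular $\|f\|_{-\g}^2=\sum_i\l_i^{2\g}f_i^2$ has the law of $\sum_i\l_i^{2\a+2\g}g_i^2$ for $g_i$ i.i.d.\ standard normal, with $\l_i^{2\a+2\g}\simeq i^{-2(\a+\g)/d}$.

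For the centred term I would invoke a classical Gaussian small-deviation estimate for weighted sums of squared normals: if the weights are of order $i^{-2r}$ with $r>1/2$, then $-\log\Pr\bigl(\sum_i i^{-2r}g_i^2<\e^2\bigr)\simeq\e^{-2/(2r-1)}$. (Alternatively one may combine the Kuelbs--Li relation with the polynomial decay $\e^{-d/(\a+\g)}$ of the metric entropy of the unit ball of $H_\a$ in $\|\cdot\|_{-\g}$, which itself follows from the approximation-number bounds of \Cref{AssumptionApproximation}.) With $r=(\a+\g)/d$ this yields $\varphi_0(\e)\lesssim\e^{-d/(\a+\g-d/2)}$; the exponent is finite precisely because $\a>d/2$. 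For the decentring term I split on the size of $\b$. If $\b\ge\a$, then $f_0\in H_\b\subseteq H_\a=\HH$, so taking $h=f_0$ makes the term equal to the constant $\tfrac12\|f_0\|_\a^2$. If $\b<\a$, I would take $h=P_jf_0$, the $H_0$-orthogonal projection of $f_0$ onto $V_j=\mathsf{Span}(\phi_i:i<j)$: the extended Jackson inequality \cref{EqApproximationPropertyExtended} with $(s,t)=(\b+\g,-\g)$ gives $\|f_0-P_jf_0\|_{-\g}\lesssim\d(j,\b+\g)\|f_0\|_\b\simeq j^{-(\b+\g)/d}\|f_0\|_\b$, while the extended Bernstein inequality \cref{EqStabilityPropertyExtended} with $(s,t)=(\a-\b,\b)$ gives $\|P_jf_0\|_\a\lesssim\d(j,\a-\b)^{-1}\|P_jf_0\|_\b\lesssim j^{(\a-\b)/d}\|f_0\|_\b$, using $\|P_jf_0\|_\b\le\|f_0\|_\b$. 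Choosing $j\simeq\e^{-d/(\b+\g)}$ makes the first quantity $\lesssim\e$, so the decentring term is $\lesssim\e^{-2(\a-\b)/(\b+\g)}$.

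Putting the pieces together, $-\log\Pi(\|f-f_0\|_{-\g}<\e)\lesssim\e^{-d/(\a+\g-d/2)}+\e^{-2(\a-\b)/(\b+\g)}$ (the second term being a constant, hence absorbed, when $\b\ge\a$). A short comparison of exponents shows that the two powers coincide exactly at $\a=\b+d/2$, with the first dominating for $d/2<\a\le\b+d/2$ and the second for $\a>\b+d/2$, which is the claimed dichotomy; rescaling $\e$ by the constant $c$ above changes nothing, as both bounds are negative powers of $\e$. The main obstacle is the centred small-ball bound: it is a genuine lower bound on a Gaussian small-ball probability for an infinite product measure, and is the only non-routine ingredient, whereas the decentring estimate is an immediate consequence of the extended Jackson/Bernstein inequalities already available in \Cref{PropositionEigenvaluesApproximationNumbers}.
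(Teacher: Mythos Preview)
Your proof is correct and follows essentially the same route as the paper: reduce to $\|\cdot\|_{-\g}$ via the smoothing property, identify the RKHS as $H_\a$, split into the centred small-ball term and the decentring term, handle the latter by approximating $f_0$ with $P_jf_0$ at $j\simeq\e^{-d/(\b+\g)}$ using Jackson/Bernstein-type inequalities, and handle the former via the polynomial small-ball rate. The only cosmetic difference is that you lead with the direct sequence-space small-deviation estimate for $\varphi_0$, whereas the paper leads with the metric-entropy route through Kuelbs--Li; you mention the latter as an alternative, and both yield $\varphi_0(\e)\lesssim\e^{-d/(\a+\g-d/2)}$.
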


\begin{proof}
	Since by assumption $\|Af-Af_0\|\simeq \|f-f_0\|_{-\g}$, the probability  in the left side
	is the decentered small ball probability  $\Pi\bigl(f: \|f-f_0\|_{-\g}<a\e\bigr)$ of the Gaussian random variable
	$F$ distributed according to the prior and viewed as map into $H_{-\g}\supset H_0$, for some $a>0$. 
	Because $F$ has covariance operator $L^{-2\a}$ as a map in $H_0$, its reproducing kernel Hilbert space 
	(or Cameron-Martin space) $\HH$
	(which does not depend on its range space) is equal to the range of $L^{-\a}$ under the norm $\|L^{-\a}h\|_\HH=\|h\|_0$
	(see e.g., Example~I.14 of \cite{vandervaart2017fundamentals}).
	Since $L^{-\a}: H_0\to H_\a$ is a norm isometry, by (iii) of \Cref{prop:whitenoise_hilbert_scale_properties},
	this is the Hilbert space $H_\a$ with its natural norm $\|\cdot\|_\a$.
	The left side of \eqref{eq:log:prob} is therefore up to constants equivalent to 
	\begin{align}
	\label{EqSmallBallGeneral}
	\inf_{h\in H_\a: \|h - f_0\|_{-\g}< \e } \|h\|_\a^2 - \log \Pi\bigl(\|f\|_{-\g}<\e\bigr).
	\end{align}
	See \cite{KuelbsLi,KuelbsLiLinde,vandervaart2008gaussianprior}, or Section~11.2, in particular, Proposition~11.19 in 
	\cite{vandervaart2017fundamentals}.
	
	By \cref{eq:whitenoise_projection_estimate_-} $\|P_jf_0-f_0\|_{-\g}\lesssim \d(j,\g)\d(j,\b)\|f_0\|_\b$, 
	which is bounded above by $\e$ for $j \simeq \e^{-d/(\b + \g)}$. Thus for this value of $j$ the first term 
	in \cref{EqSmallBallGeneral} is bounded above by
	\begin{align*}
	\|P_j  f_0\|_\a\lesssim& 
	\begin{cases}
	\|P_j f_0\|_\b ,&\text{ if } \a\leq\b,\\
	1/\d(j, \a - \b)\|P_jf_0\|_\b,&\text{ if } \a > \b
	\end{cases}
	\end{align*}
	by \cref{EqStabilityPropertyExtended}. Here $\|P_j  f_0\|_\b\le \|P_jf_0-f_0\|_\b+\|f_0\|_\b\le \bigl(\d(j,0)+1\bigr)\|f_0\|_\b$,
	by \cref{EqApproximationPropertyExtended}.
	It follows that the contribution of the decentering in \cref{EqSmallBallGeneral} is of order 1 if $\a\le\b$ and is bounded above
	by a term of order $\e^{-2(\a - \b)/(\b + \g)}$ if $\a>\b$.
	
	By \Cref{lem:whitenoise_metric_entropy_hilbert_scales}, the metric entropy 
	$\log N\bigl(\e, \{f \in H_\a: \|f\|_\a \leq 1\}, \|\cdot\|_{-\g}\bigr)$ is of the order  $\e^{-d/(\a + \g)}$. 
	Hence, by \cite{KuelbsLi} (see Lemma 6.2 in \cite{vandervaart2008RKHS}),
	\begin{align*}
	-\log \Pi\bigl(\|f\|_{-\g}< \e\bigr) \simeq \e^{-d/(\a +\g - d/2)}.
	\end{align*}
	Finally, the assertion of the lemma follows from discussion by cases.
\end{proof}

It follows that \cref{EqPriorMass} is satisfied for
\begin{align}
\label{eq:whitenoise_gaussian_prior_rate_from_small_ball_probability}
\e_n \geq n^{-(\b \wedge (\a - d/2) + \g)/( 2\a + 2 \g)}.
\end{align}
The next step of the proof is to bound the prior probability in \cref{EqPriorApproximation}.

\begin{lemma}
	\label{lem:whitenoise_gaussian_prior_approximation}
	Under the assumptions of \Cref{thm:whitenoise_gaussian_prior},
	there exist $a,b>0$, such that for every $j\in \NN$ and $t>0$,
	$$\Pi\bigl(f: \|f^{(j)}-f\|_0>t+ a j^{1/2-\a/d}\bigr)\le e^{-bt^2j^{2\a/d}}.$$
\end{lemma}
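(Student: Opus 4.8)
The plan is to reduce the statement to a Gaussian concentration inequality for the norm of a high-frequency tail series, exploiting that in a Hilbert scale the approximation spaces $V_j$ are eigenspaces of $L^{-1}$, which simultaneously diagonalise the prior covariance operator $L^{-2\a}$. For notation, let $(\phi_i)_{i\in\NN}$ be the orthonormal eigenbasis of $L^{-1}$ in $H_0$, with eigenvalues $\lambda_i\da0$, so that (as in the proof of \Cref{PropositionEigenvaluesApproximationNumbers}) $V_j=\mathsf{Span}\{\phi_i:i<j\}$ and $\d(j,s)\simeq\lambda_j^s$; write $P_j$ for the $H_0$-orthogonal projection onto $V_j$, i.e.\ $P_jf=\sum_{i<j}f_i\phi_i$. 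Since $L^{-2\a}$ has eigenvalues $\lambda_i^{2\a}$, a draw $F$ from the prior has the series representation $F=\sum_{i\in\NN}F_i\phi_i$ with $F_i\sim N(0,\lambda_i^{2\a})$ independent.

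The first step is the Galerkin reduction: I would show that
\[
\|f^{(j)}-f\|_0\lesssim\|(I-P_j)f\|_0\qquad\text{for every }f\in H_0,
\]
with implied constant independent of $j$. Because the map $f\mapsto f^{(j)}=R_jAf$ with $R_j=A^{-1}Q_j$ is linear and fixes each element of $V_j$ (indeed $AP_jf\in AV_j=W_j$, so $Q_jAP_jf=AP_jf$ and $(P_jf)^{(j)}=P_jf$), one gets $f^{(j)}-P_jf=(f-P_jf)^{(j)}=R_jA(f-P_jf)$, hence $\|f^{(j)}-P_jf\|_0\le\|R_j\|\,\|A(f-P_jf)\|\lesssim\d(j,\g)^{-1}\|(I-P_j)f\|_{-\g}$, using $\|R_j\|\lesssim\d(j,\g)^{-1}$ from \cref{eq:whitenoise_galerkin_R_j_estimate} and $\|Ag\|\simeq\|g\|_{-\g}$. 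The decisive point is that $(I-P_j)f=\sum_{i\ge j}f_i\phi_i$ is supported on the modes $i\ge j$, so that, since $\lambda_i\da$,
\[
\|(I-P_j)f\|_{-\g}^2=\sum_{i\ge j}f_i^2\lambda_i^{2\g}\le\lambda_j^{2\g}\sum_{i\ge j}f_i^2\simeq\d(j,\g)^2\,\|(I-P_j)f\|_0^2 .
\]
Combining the last two displays with $\|P_jf-f\|_0=\|(I-P_j)f\|_0$ and the triangle inequality yields the reduction. (This is the quasi-optimality of the least-squares Galerkin method, sharpened on the orthogonal complement of $V_j$; the $\g$-dependence cancels, matching the $\g$-free bound of the lemma.)

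The second step is the concentration bound for $G_j:=(I-P_j)F=\sum_{i\ge j}F_i\phi_i$. Since $\a>d/2$ and $\lambda_i^{2\a}\simeq i^{-2\a/d}$, the series $\sum_{i\ge j}\lambda_i^{2\a}$ converges, so $G_j$ is a centred Gaussian random element of $H_0$ with
\[
\E\|G_j\|_0^2=\sum_{i\ge j}\lambda_i^{2\a}\lesssim j^{1-2\a/d},\qquad \sup_{\|g\|_0\le1}\E\langle G_j,g\rangle_0^2=\lambda_j^{2\a}\simeq j^{-2\a/d},
\]
whence $\E\|G_j\|_0\le(\E\|G_j\|_0^2)^{1/2}\lesssim j^{1/2-\a/d}$. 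Borell's inequality (Lemma~3.1 in \cite{LedouxTalagrand} and the subsequent discussion, exactly as in the proof of \Cref{thm:whitenoise_general_contraction_rate_subspace}) then gives, for all $t>0$,
\[
\Pr\bigl(\|G_j\|_0>\E\|G_j\|_0+t\bigr)\le\exp\Bigl(-\frac{t^2}{2\lambda_j^{2\a}}\Bigr)\le\exp\bigl(-b\,t^2 j^{2\a/d}\bigr)
\]
for some $b>0$. Absorbing $\E\|G_j\|_0$ into a term $a\,j^{1/2-\a/d}$ proves the claimed inequality with $\|f^{(j)}-f\|_0$ replaced by $\|(I-P_j)F\|_0$; the lemma follows from the first step after rescaling the constants $a,b$ by the fixed factor appearing there.

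I expect the Galerkin reduction to be the main obstacle: the trivial bound $\|(I-P_j)f\|_{-\g}\lesssim\|(I-P_j)f\|_0$ is useless since it retains the diverging factor $\d(j,\g)^{-1}$, and one must instead invoke the Bernstein-type inequality \emph{on the orthogonal complement} of $V_j$, which is available precisely because $V_j$ is an eigenspace of $L^{-1}$. The remainder is a routine application of Borell's inequality together with $\lambda_i\simeq i^{-1/d}$.
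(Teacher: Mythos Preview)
Your proof is correct and follows essentially the same approach as the paper: bound the weak and strong second moments of the Gaussian Galerkin error and apply Borell's inequality, exploiting the eigenbasis $(\phi_i)$ of $L^{-1}$ and the assumption $\lambda_i\simeq i^{-1/d}$. The only organisational difference is that you first establish the deterministic quasi-optimality bound $\|f^{(j)}-f\|_0\lesssim\|(I-P_j)f\|_0$ (equivalently, that $(R_jA-I)$ annihilates $V_j$ and is uniformly bounded, which is \cref{eq:whitenoise_galerkin_R_jA_estimate}) and then compute the moments of the simpler tail $(I-P_j)F$ directly from its series; the paper instead works with $(R_jA-I)F$ throughout and invokes \cref{eq:whitenoise_galerkin_solution_error_estimate} inside both moment computations. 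Your factoring through $(I-P_j)F$ makes the moment bounds slightly more transparent, but the underlying ingredients and the overall structure are the same.
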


\begin{proof}
	We have $f^{(j)}-f=(R_jA-I)f$, for $R_j=A^{-1}Q_j$. Therefore, the probability
	on the left concerns the random variable $(R_jA-I)F$, if $F$ is a variable distributed
	according to the prior $\Pi$. Since $F$ is zero-mean normal with
	covariance operator $L^{-2\a}$, this variable is zero-mean Gaussian with 
	covariance operator $(R_jA-I) L^{-2\a}(R_jA-I)^*$. We shall compute the weak and strong
	second moments of the variable $(R_jA-I)F$, and next apply Borell's
	inequality for the norm of a Gaussian variable to obtain the exponential bound.
	
	Because $\langle (R_jA-I)F,g\rangle_0=\langle F, (R_jA-I)^*g\rangle_0$ is zero-mean Gaussian 
	with variance $\|L^{-\a}(R_jA-I)^*g\|_0^2=\|(R_jA-I)^*g\|_{-\a}^2$, 
	the weak second moment of $(R_jA-I)F$ is given by
	$$\sup_{\|g\|_0\le 1}\E \langle (R_jA-I)F,g\rangle_0^2=\sup_{\|g\|_0\le 1}\|(R_jA-I)^*g\|_{-\a}^2.$$
	By the norm duality \cref{EqNormDuality}, the right side is equal to
	$$\sup_{\|g\|_0\le 1}\sup_{\|f\|_\a\le 1}\langle f, (R_jA-I)^*g\rangle_0^2
	\le \sup_{\|f\|_\a\le 1}\|(R_jA-I)f\|_0^2\lesssim \d(j,\a)^2.$$
	in view of \cref{eq:whitenoise_galerkin_solution_error_estimate}.
	
	The strong second moment of the Gaussian variable $(R_jA-I)F$
	is equal to the trace of its covariance operator. 
	As $\trace(S^*S)=\sum_i\|S \phi_i\|^2=\sum_i\sum_j\langle S\phi_i, \phi_j\rangle^2=\sum_i\|S^*\phi_i\|^2$,
	for any orthonormal basis $(\phi_i)$ and operator $S$, we have 
	$$\E \|(R_jA-I)F\|_0^2= \sum_{i\in\NN}\|(R_jA-I)L^{-\a}\phi_i\|_0^2.$$
	For the orthonormal basis of eigenfunctions of $L^{-1}$ and $V_j$ the span of the first
	$j-1$ of these eigenfunctions, as in \Cref{PropositionEigenvaluesApproximationNumbers},
	$L^{-\a}V_j\subset V_j$, and hence $(R_jA-I)L^{-\a}\phi_i$ vanishes for $i<j$. 
	For $i\ge j$ the latter element is the difference $g^{(j)}-g$ of the Galerkin solution $g^{(j)}$ to $g=L^{-\a}\phi_i$.
	Therefore, by  \cref{eq:whitenoise_galerkin_solution_error_estimate} the preceding display is bounded above by
	a multiple of 
	$$%\sum_{i\ge j}\|(R_jA-I)L^{-\a}\phi_i\|_0^2\lesssim 
	\sum_{i\ge j}\d(i,\a)^2\|L^{-\a}\phi_i\|_\a^2=\sum_{i\ge j}\d(i,\a)^2\|\phi_i\|_0^2\lesssim j^{1-2\a/d},$$
	where we used the estimate $\sum_{i>j} i^{-b} \leq j^{1 -b}/(b -1)$, for $b>1$.
	
	Since the first moment of $\|(R_jA-I)F\|_0$ is bounded by the root of its
	second moment, the lemma follows by Borell's inequality
	(see e.g. Lemma~3.1 and subsequent discussion in \cite{LedouxTalagrand}).
\end{proof}

For $t^2= 4 n\e_n^2/(bj_n^{2\a/d})$ and $j=j_n$ the bound in the preceding lemma becomes $e^{-4n\e_n^2}$.
Hence \cref{EqPriorApproximation} is satisfied for
$$\eta_n\gtrsim \sqrt{n} \e_n j_n^{-\a/d}+ j_n^{1/2-\a/d}.$$
Here we choose $\e_n$ the minimal solution that satisfies the direct prior mass condition \cref{EqPriorMass},
given in \cref{eq:whitenoise_gaussian_prior_rate_from_small_ball_probability}.
Next we solve for $\eta_n$ under the constraints \cref{EqEtaEpsilonDelta} and \cref{EqEtaDelta}.
The first of these constraints,  $j_n\le n\e_n^2$, shows that the first term on the right side of the preceding display  always
dominates the second term. Therefore, we obtain the requirements
$j_n\le n\e_n^2$ and 
\begin{align*}
\eta_n&\ge \sqrt n\, n^{-(\b \wedge (\a - d/2) + \g)/( 2\a + 2 \g)} j_n^{-\a/d},\\
\eta_n&\ge n^{-(\b \wedge (\a - d/2) + \g)/( 2\a + 2 \g)}j_n^{\g/d},\\
\eta_n&\ge j_n^{-\b/d}.
\end{align*}
Depending on the relation between $\a$ and $\b + d/2$, two situations need to be discussed separately.

\begin{enumerate}[(i)]
	\item $\a \le \b + d/2$. 
	We choose $j_n \simeq n^{d/(2\a +2\g) } = n\e_n^2$ and then see that
	the first two requirements in the preceding display both reduce
	to $\eta_n \ge  n^{-(\a - d/2)/(2\a + 2\g)}$, while the third becomes
	$\eta_n\ge n^{-\b/(2\a+2\g)}$ and becomes inactive. 
	\item $\a > \b + d/2$. We choose $j_n \simeq n^{d/(2\a+2\g)}\le n\e_n^2$, 
	and then see that all three requirements reduce to $\eta_n\ge n^{-\b/(2\a+2\g)}$.
\end{enumerate}
Finally, we apply \Cref{thm:whitenoise_general_contraction_rate_subspace} to complete the proof.

\subsection{Proof of \texorpdfstring{\Cref{thm:whitenoise_gaussian_mixture_prior}}{Gaussian Mixtures}}
Let $\Pi_\t$ denote the zero-mean Gaussian distribution on $H$ with covariance
operator $\t^2L^{-2\a}$ (where $\a>d/2$).

\begin{lemma}
	\label{lem:whitenoise_gaussian_mixture_prior_concentration}
	Under the assumptions of \Cref{thm:whitenoise_gaussian_mixture_prior}, for $f_0\in H_\b$ and $\b\le \a$,
	as $\e\da 0$,
	\begin{align*}
	-\log\Pi_\t \bigl(f: \|Af-Af_0\|<\e\bigr) \lesssim
	\frac1{\t^{2}} \left(\frac1\e\right)^{(2\a - 2\b)/(\b + \g)}
	+ \left(\frac{\t}{\e}\right)^{d/(\a + \g -d/2)}.
	\end{align*}
\end{lemma}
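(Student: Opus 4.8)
The plan is to follow the route of the proof of \Cref{lem:whitenoise_gaussian_prior_concentration}, tracking the scale parameter $\t$ throughout. Since $\|Af-Af_0\|\simeq\|f-f_0\|_{-\g}$, it suffices to bound $-\log\Pi_\t\bigl(f:\|f-f_0\|_{-\g}<a\e\bigr)$ for a fixed $a>0$, where under $\Pi_\t$ the variable $f$ is a zero-mean Gaussian element of $H_{-\g}$ with covariance operator $\t^2L^{-2\a}$. Its reproducing kernel Hilbert space is again $H_\a$ as a set (a Gaussian element and its scalar multiples share the same RKHS), but with the rescaled norm $\|h\|_{\HH_\t}=\t^{-1}\|h\|_\a$, since multiplying a Gaussian element by $\t$ divides its RKHS norm by $\t$. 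I would then invoke the decentering bound used for \eqref{EqSmallBallGeneral} (Proposition~11.19 in \cite{vandervaart2017fundamentals}), which here reads
\begin{align*}
-\log\Pi_\t\bigl(\|f-f_0\|_{-\g}<a\e\bigr)\lesssim \frac1{\t^2}\inf_{h\in H_\a:\ \|h-f_0\|_{-\g}<a\e}\|h\|_\a^2-\log\Pi_\t\bigl(\|f\|_{-\g}<a\e\bigr),
\end{align*}
and estimate the two terms separately.

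For the centering term I would again take $h=P_jf_0$ with $j\simeq\e^{-d/(\b+\g)}$. As before, \cref{eq:whitenoise_projection_estimate_-} gives $\|P_jf_0-f_0\|_{-\g}\lesssim\d(j,\g)\d(j,\b)\|f_0\|_\b\lesssim\e$, so this $h$ is admissible, while \eqref{EqStabilityPropertyExtended} applied on $V_j$ together with \eqref{EqApproximationPropertyExtended} yields $\|P_jf_0\|_\a\lesssim\d(j,\a-\b)^{-1}\|P_jf_0\|_\b\lesssim j^{(\a-\b)/d}\|f_0\|_\b\simeq\e^{-(\a-\b)/(\b+\g)}$; the assumption $\b\le\a$ is exactly what keeps the exponent $\a-\b$ nonnegative, with the boundary case $\a=\b$ giving a bound of order $1$. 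Hence the centering term is $\lesssim\t^{-2}\e^{-2(\a-\b)/(\b+\g)}=\t^{-2}(1/\e)^{(2\a-2\b)/(\b+\g)}$, which is the first term in the assertion.

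For the centered small-ball term I would rescale: $\Pi_\t\bigl(\|f\|_{-\g}<a\e\bigr)=\Pi_1\bigl(\|f\|_{-\g}<a\e/\t\bigr)$, with $\Pi_1$ the unscaled prior of \Cref{sec:whitenoise_gaussian_prior}. By \Cref{lem:whitenoise_metric_entropy_hilbert_scales} the metric entropy of the unit ball of $H_\a$ in $\|\cdot\|_{-\g}$ is of order $\d^{-d/(\a+\g)}$, so the Kuelbs--Li estimate (Lemma~6.2 in \cite{vandervaart2008RKHS}, cf.\ \cite{KuelbsLi}) gives $-\log\Pi_1\bigl(\|f\|_{-\g}<\d\bigr)\simeq\d^{-d/(\a+\g-d/2)}$ as $\d\da0$; taking $\d=a\e/\t$ produces $\lesssim(\t/\e)^{d/(\a+\g-d/2)}$, the second term, with constants that do not depend on $\t$ (which is what the later application of \Cref{cond:whitenoise_gaussian_mixture} requires). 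Summing the two bounds completes the proof. I do not expect a genuine obstacle here: the only points to watch are that the centering estimate must be done uniformly over $\b\le\a$ (so one cannot shortcut using a strict inequality), and that the rescaling of the small-ball probability must be plugged into the already-established asymptotics rather than re-derived.
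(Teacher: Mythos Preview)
Your proposal is correct and follows essentially the same route as the paper: the paper's proof simply says to repeat the argument of \Cref{lem:whitenoise_gaussian_prior_concentration} with the Cameron--Martin norm replaced by $\|\cdot\|_\HH=\t^{-1}\|\cdot\|_\a$, which is precisely what you carry out in detail. Your rescaling of the centered small-ball probability to $\Pi_1$ is a clean way to see where the $(\t/\e)^{d/(\a+\g-d/2)}$ term comes from.
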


\begin{lemma}
	\label {LemmaPriorConcentrationRough}
	Under the assumptions of \Cref{thm:whitenoise_gaussian_mixture_prior}, for $f_0\in H_\b$  and $\b\le \a$,
	as $\e\da 0$,
	\begin{align*}
	-\log\Pi_\t \bigl(f: \|f\|_0<\e\bigr) \gtrsim \left(\frac{\t}{\e}\right)^{d/(\a  -d/2)}.
	\end{align*}
\end{lemma}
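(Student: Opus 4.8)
The plan is to reduce the statement to a centred small-ball estimate for a single fixed Gaussian variable, and then to invoke the entropy--small-ball correspondence, keeping only its lower half. First I would remove the scale parameter: if $F$ denotes the zero-mean Gaussian element of $H_0$ with covariance operator $L^{-2\a}$, then under $\Pi_\t$ the variable $f$ is distributed as $\t F$, so that $\Pi_\t(f:\|f\|_0<\e)=\Pr(\|F\|_0<\e/\t)$ and hence
\begin{equation*}
-\log\Pi_\t\bigl(f:\|f\|_0<\e\bigr)=-\log\Pr\bigl(\|F\|_0<\e/\t\bigr).
\end{equation*}
It therefore suffices to show $-\log\Pr(\|F\|_0<\d)\gtrsim\d^{-d/(\a-d/2)}$ as $\d\da0$, with a constant not depending on $\t$, and then to substitute $\d=\e/\t$. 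Note that the hypotheses $f_0\in H_\b$ and $\b\le\a$ are irrelevant for this lemma, being carried over from the companion \Cref{lem:whitenoise_gaussian_mixture_prior_concentration}.

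Next I would identify the Cameron--Martin space of $F$ and the relevant metric entropy. Exactly as in the proof of \Cref{lem:whitenoise_gaussian_prior_concentration}, since $F$ has covariance $L^{-2\a}$ and $L^{-\a}: H_0\to H_\a$ is a norm isometry by (iii) of \Cref{prop:whitenoise_hilbert_scale_properties}, the reproducing kernel Hilbert space of $F$ is $H_\a$ with its norm $\|\cdot\|_\a$. The quantity governing the small-ball probability is then $\log N\bigl(\d,\{h\in H_\a:\|h\|_\a\le1\},\|\cdot\|_0\bigr)$, which is of the order $\d^{-d/\a}$. This is \Cref{lem:whitenoise_metric_entropy_hilbert_scales} with $\g=0$; alternatively it follows from \Cref{PropositionEigenvaluesApproximationNumbers}, because in the eigenbasis of $L^{-1}$ the unit ball of $H_\a$ is the $\ell^2$-ellipsoid with semi-axes $\l_i^{\a}\simeq i^{-\a/d}$, which is compact precisely because $\a>d/2$, with metric entropy of the stated polynomial order.

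Finally I would apply the Kuelbs--Li estimate (as already invoked for \Cref{lem:whitenoise_gaussian_prior_concentration}; see \cite{KuelbsLi} and Lemma~6.2 in \cite{vandervaart2008RKHS}), using only the implication that entropy of the Cameron--Martin unit ball of order $\d^{-2\a_0}$ forces $-\log\Pr(\|F\|_0<\d)\gtrsim\d^{-2\a_0/(1-\a_0)}$. With $2\a_0=d/\a$, and hence $\a_0=d/(2\a)\in(0,1)$, one computes $2\a_0/(1-\a_0)=(d/\a)/(1-d/(2\a))=2d/(2\a-d)=d/(\a-d/2)$, which is the desired exponent; tracing the constants shows they depend only on $d$, $\a$ and the constants implicit in $\l_j\simeq j^{-1/d}$, so the bound is uniform in $\t$, and substituting $\d=\e/\t$ completes the proof.

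The only delicate point is that we need the \emph{lower}-bound half of the entropy--small-ball correspondence, i.e. the direction opposite to the one used to pass from prior concentration to contraction rates, so one must cite the precise two-sided form rather than the familiar upper estimate. A fully self-contained alternative avoids Kuelbs--Li entirely: under $\Pi_\t$ the coordinates $f_i=\langle f,\phi_i\rangle_0$ in the eigenbasis are independent $N(0,\t^2\l_i^{2\a})$ variables, so $\|f\|_0^2$ is a weighted sum of squared standard normals with weights $\simeq\t^2 i^{-2\a/d}$; bounding $\Pr(\|f\|_0<\e)$ by retaining only the first $m\simeq(\t/\e)^{d/(\a-d/2)}$ coordinates and applying the elementary left-tail bound $\Pr(\chi^2_m<mt)\le(te^{1-t})^{m/2}$ for small $t$ gives $-\log\Pr(\|f\|_0<\e)\gtrsim m$, which is exactly the claimed rate. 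Either way the computation is short; there is no substantive obstacle.
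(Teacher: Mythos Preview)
Your proof is correct and follows essentially the same approach as the paper: the paper's one-line proof simply says the lemma follows as in \Cref{lem:whitenoise_gaussian_prior_concentration} (with the Cameron--Martin norm scaled by $1/\t$) but keeping only the centered small-ball term, which is precisely your Kuelbs--Li argument with $\g=0$; your preliminary rescaling $\Pi_\t(\|f\|_0<\e)=\Pr(\|F\|_0<\e/\t)$ is just an equivalent bookkeeping choice. The self-contained coordinate argument you sketch at the end is a nice alternative that the paper does not give.
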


\begin{lemma}
	\label{lem:whitenoise_gaussian_mixture_prior_approximation}
	Under the assumptions of \Cref{thm:whitenoise_gaussian_mixture_prior},
	there exist $a,b>0$ such that, for every $j\in\NN$ and $x, \t>0$,
	$$\Pi_\t\bigl(f: \|f^{(j)}-f\|_0> \t x + \t a j^{1/2-\a/d}\bigr)\le e^{- bx^2 j^{2\a/d}} $$
\end{lemma}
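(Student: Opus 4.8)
The plan is to obtain this as a rescaled version of \Cref{lem:whitenoise_gaussian_prior_approximation}. The first step is to note that $\Pi_\t$ is the law of $\t F$, where $F$ is distributed according to the base Gaussian prior $\Pi$ of \Cref{sec:whitenoise_gaussian_prior} with covariance operator $L^{-2\a}$. The second step is to recall that the Galerkin solution map $f\mapsto f^{(j)}=R_jAf$, with $R_j=A^{-1}Q_j$, is linear: it is the composition of $A$, the orthogonal projection $Q_j$ onto $W_j=AV_j$, and the inverse $A^{-1}$ on its domain $W_j$. Hence $(\t F)^{(j)}-\t F=\t\bigl(F^{(j)}-F\bigr)$, so that under $\Pi_\t$ one has $\|f^{(j)}-f\|_0=\t\,\|F^{(j)}-F\|_0$.

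It then follows that the $\Pi_\t$-probability of the event $\{f:\|f^{(j)}-f\|_0>\t x+\t a\,j^{1/2-\a/d}\}$ equals the $\Pi$-probability of $\{F:\|F^{(j)}-F\|_0>x+a\,j^{1/2-\a/d}\}$. Since the hypotheses of \Cref{thm:whitenoise_gaussian_mixture_prior} subsume those of \Cref{thm:whitenoise_gaussian_prior} for the base prior $\Pi$ (the same conditions $\a>d/2$, $\l_j\simeq j^{-1/d}$ and $\|Af\|\simeq\|f\|_{-\g}$, and $f_0\in H_\b$), I would invoke \Cref{lem:whitenoise_gaussian_prior_approximation} to bound the latter probability by $e^{-bx^2j^{2\a/d}}$, with exactly the constants $a,b$ produced there. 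This gives the claim.

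Should one prefer a self-contained argument, I would instead repeat the Borell computation used in the proof of \Cref{lem:whitenoise_gaussian_prior_approximation}: under $\Pi_\t$ the variable $(R_jA-I)F$ is zero-mean Gaussian with covariance operator $\t^2(R_jA-I)L^{-2\a}(R_jA-I)^*$, so its weak and strong second moments are $\t^2$ times those computed there, namely $\lesssim\t^2\d(j,\a)^2\simeq\t^2 j^{-2\a/d}$ and $\lesssim\t^2 j^{1-2\a/d}$ (using $\d(j,\a)\simeq j^{-\a/d}$ from \Cref{PropositionEigenvaluesApproximationNumbers} together with the Galerkin error estimate), and then Borell's inequality for the norm of a Gaussian vector delivers the stated bound. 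I do not expect a real obstacle here: the statement is purely a rescaling of an already-established lemma, and the only point needing (brief) care is verifying that the linearity of the Galerkin map lets the scalar $\t$ factor out cleanly and that the constants remain uniform in $\t$.
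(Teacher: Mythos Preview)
Your proposal is correct and matches the paper's own proof essentially verbatim: the paper simply notes that $\Pi_\t$ is the law of $\t F$ for $F\sim\Pi$ and that $f\mapsto f^{(j)}-f$ is linear, and then invokes \Cref{lem:whitenoise_gaussian_prior_approximation}. Your additional self-contained Borell argument is not needed but is also fine.
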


\begin{proof}[Proofs]
	The proof of the first lemma follows the same lines as the proof of \Cref{lem:whitenoise_gaussian_prior_concentration},
	except that now the Cameron-Martin space of the measure $\Pi_\t$ on $H_{-\g}$ is $H_\a$ 
	equipped with the norm $\|\cdot\|_\HH = \frac{1}{\t} \|\cdot\|_\a$ rather than its natural norm.
	The second lemma follows similarly, but considers the centered probability only.
	The third  lemma is immediate from 
	\Cref{lem:whitenoise_gaussian_prior_approximation} as $\Pi_\t$ is the law of $\t F$, for
	$F$ the Gaussian variable with the law $\Pi$ as in the latter lemma, and the map
	$f\mapsto f^{(j)}-f$ is linear.
\end{proof}

As preparation for the proof of  \Cref{thm:whitenoise_gaussian_mixture_prior}, we first show
that the minimax rate can be obtained by a Gaussian prior with the deterministic scaling, dependent on $\b$,
given by
\begin{equation}
\label{EqDefTaun}
\t_n = n^{(\a - d/2 - \b)/(2\b + 2 \g +d)}.
\end{equation}

\begin{theorem}
	\label{lem:whitenoise_gaussian_mixture_deterministic_scaling}
	Assume the conditions on the Hilbert scale, the forward operator $A$ and the true parameter $f_0$ in
	\Cref{thm:whitenoise_gaussian_prior} hold.  Suppose that the priors $\Pi$ are zero-mean Gaussian
	with covariance operators $\t_n^2 L^{-2 \a}$ with $\t_n$ as given in \cref{EqDefTaun} and $\a >d/2$.
	Then for $\b\le \a$, the posterior distribution satisfies, for
	sufficiently large $M>0$,
	\begin{align*}
	%\label{eq:whitenoise_contraction_rate_gaussian_mixture_deteministic_scaling}
	\Pi_n\Bigl( f: \|f - f_0\|_0 > M n^{-\b/(2\b+2\g+d)} \given Y^{(n)}\Bigr) \overset{\mathbb{P}^{(n)}_{f_0} }{\ra} 0.
	\end{align*}
\end{theorem}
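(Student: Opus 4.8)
The plan is to obtain this as a direct corollary of \Cref{thm:whitenoise_general_contraction_rate_subspace}, applied to the (proper, since $\a>d/2$) Gaussian prior $\Pi=\Pi_{\t_n}$ with the scaling frozen at the oracle value $\t_n$ of \cref{EqDefTaun}. Concretely, I would verify the prior mass condition \cref{EqPriorMass} using \Cref{lem:whitenoise_gaussian_mixture_prior_concentration} and the prior approximation condition \cref{EqPriorApproximation} using \Cref{lem:whitenoise_gaussian_mixture_prior_approximation}, and then exhibit $\e_n$, $j_n$, $\eta_n$ satisfying \cref{EqjEpsilon}, \cref{EqEtaEpsilonDelta}, \cref{EqEtaDelta}. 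Throughout, $\d(j,s)=j^{-s/d}$ and $S=\infty$ by \Cref{PropositionEigenvaluesApproximationNumbers}, so the conditions $0<\b<S$ are met; the standing hypothesis $\b\le\a$ is precisely what \Cref{lem:whitenoise_gaussian_mixture_prior_concentration} requires.

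\textbf{Direct rate.}
First I would take $\e_n\simeq n^{-(\b+\g)/(2\b+2\g+d)}$ and substitute $\t=\t_n$ in \Cref{lem:whitenoise_gaussian_mixture_prior_concentration}. A short computation of exponents shows that both terms on its right-hand side, namely $\t_n^{-2}\,\e_n^{-(2\a-2\b)/(\b+\g)}$ and $(\t_n/\e_n)^{d/(\a+\g-d/2)}$, are of the exact order $n^{d/(2\b+2\g+d)}=n\e_n^2$. Hence, choosing the proportionality constant in $\e_n$ large enough, $-\log\Pi_{\t_n}\bigl(\|Af-Af_0\|<\e_n\bigr)\le n\e_n^2$, which is \cref{EqPriorMass}; this is legitimate because the left side of \cref{EqPriorMass} is increasing in $\e_n$ while the bound furnished by the lemma is decreasing in $\e_n$, exactly as in the proof of \Cref{thm:whitenoise_random_series_prior_mild}. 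Also $n\e_n^2=n^{d/(2\b+2\g+d)}\ra\infty$.

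\textbf{Truncation level and inverse rate.}
Next I would set $j_n\simeq n^{d/(2\b+2\g+d)}$, with the constant small enough that $j_n\le c\,n\e_n^2$, giving \cref{EqjEpsilon}. Applying \Cref{lem:whitenoise_gaussian_mixture_prior_approximation} at $j=j_n$ with $x=2\sqrt n\,\e_n/(\sqrt b\,j_n^{\a/d})$ makes its right side equal $e^{-4n\e_n^2}$, and using $j_n\le c\,n\e_n^2$ to absorb the term $\t_n a\,j_n^{1/2-\a/d}$ into $\t_n\sqrt n\,\e_n\,j_n^{-\a/d}$ shows that \cref{EqPriorApproximation} holds whenever $\eta_n\gtrsim \t_n\sqrt n\,\e_n\,j_n^{-\a/d}$. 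It then remains to choose $\eta_n$ also meeting \cref{EqEtaEpsilonDelta}, i.e.\ $\eta_n\ge\e_n j_n^{\g/d}$, and \cref{EqEtaDelta}, i.e.\ $\eta_n\ge j_n^{-\b/d}$. Substituting the chosen $\e_n$, $j_n$ and $\t_n$ from \cref{EqDefTaun}, all three of these lower bounds collapse to the single order $n^{-\b/(2\b+2\g+d)}$, so $\eta_n\simeq n^{-\b/(2\b+2\g+d)}$ (with a sufficiently large constant) satisfies every requirement; moreover $\eta_n\ge\e_n$ since $\g>0$. Invoking \Cref{thm:whitenoise_general_contraction_rate_subspace} then yields the assertion.

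\textbf{Where the content lies.}
The argument is essentially exponent bookkeeping; the one point needing care is that the choice of $j_n$ must simultaneously balance the three competing contributions to $\eta_n$ — the stochastic term $\t_n\sqrt n\,\e_n\,j_n^{-\a/d}$ coming from inverting the Galerkin projection of the noise, the noise-amplification term $\e_n j_n^{\g/d}$, and the Galerkin bias $j_n^{-\b/d}$ — and that the specific deterministic scaling $\t_n$ in \cref{EqDefTaun} is exactly the one for which the prior-mass computation closes at this same $\e_n$. Verifying these two alignments is the substance of the proof; I do not anticipate any obstacle beyond it.
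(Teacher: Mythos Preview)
Your proposal is correct and follows essentially the same route as the paper: apply \Cref{thm:whitenoise_general_contraction_rate_subspace}, verify \cref{EqPriorMass} via \Cref{lem:whitenoise_gaussian_mixture_prior_concentration} and \cref{EqPriorApproximation} via \Cref{lem:whitenoise_gaussian_mixture_prior_approximation}, take $\e_n\simeq n^{-(\b+\g)/(2\b+2\g+d)}$, $j_n\simeq n\e_n^2$, and check that all three lower bounds on $\eta_n$ coincide at $n^{-\b/(2\b+2\g+d)}$. Your write-up is in fact more explicit about the exponent checks than the paper's, which simply remarks that the verification is ``straightforward''.
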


\begin{proof}
	The theorem is a corollary to \Cref{thm:whitenoise_general_contraction_rate_subspace}.
	The proof follows the same lines as the proof of \Cref{thm:whitenoise_gaussian_prior}.
	By  \Cref{lem:whitenoise_gaussian_mixture_prior_concentration}, inequality \cref{EqPriorMass} is satisfied for
	\begin{align*}
	\e_n\gtrsim n^{ -(\b+\g)/(2\b + 2\g + d)}.
	\end{align*}
	By \Cref{lem:whitenoise_gaussian_mixture_prior_approximation}, inequality \cref{EqPriorApproximation} is satisfied for
	\begin{align*}
	\eta_n \gtrsim \t_n \big(\sqrt{n}\e_n j_n^{-\a/d} + j_n^{1/2 -\a/d}\big).
	\end{align*}
	We choose $j_n \simeq n \e_n^2$, and the minimal solution $\e_n = n^{ -(\b+\g)/(2\b + 2\g + d)}$ to the second
	last display. It is then straightforward to verify that \cref{EqEtaEpsilonDelta}, \cref{EqEtaDelta} 
	and \cref{EqPriorApproximation} are satisfied for $\eta_n \simeq n^{-\b/(2\b +2 \g + d)}$.
\end{proof}

\Cref{thm:whitenoise_gaussian_mixture_prior} is a corollary of
\Cref{TheoremGeneralAdaptation}, with the choices 
\begin{alignat*}{3}\eta_n &\simeq n^{-\b/(2\b +2\g + d)}, \qquad\qquad& \e_n &\simeq n^{-(\b + \g)/(2\b + 2\g + d)},\\
j_n&\simeq n\e_n^2 = n^{d/(2\b + 2\g +d)}.&&
\end{alignat*}
Conditions \cref{EqjEpsilon}, \cref{EqEtaEpsilonDelta}, and \cref{EqEtaDelta} are satisfied
for these choices. It remains to 
verify \cref{EqPriorMass}, and \cref{EqPriorApproximationExtended}--\cref{EqPriorMassRough}.

For ease of notation, for the moment, define $\eta_n$ and $\e_n$ as in the preceding display, with exact equality
(i.e., with the constant  set equal 1). Let $\t_n$ be the `optimal' scaling rate defined in \cref{EqDefTaun}.

Verification of \cref{EqPriorMass}. 
For $\t\simeq\t_n$ and $\e\simeq\e_n$ as given and $\b\le \a$,
both terms in the right side of \Cref{lem:whitenoise_gaussian_mixture_prior_concentration}
are of the  order $n\e_n^2$. The lemma yields, for $\t_n\le\t\le 2\t_n$ and some constant $a_1>0$,
$$-\log \Pi_{\t}\bigl(f: \|Af - Af_0\|< \e_n\bigr) \le a_1 n\e_n^2.$$
This shows that
\begin{align*}
\Pi\bigl(f:\|Af - Af_0\|<\e_n\bigr) 
&= \int_0^\infty \Pi_\t(f:\|Af - Af_0\|< \e_n) \,dQ(\t)\\
&\geq e^{- a_1n\e_n^2} Q(\t_n,2\t_n).
\end{align*}
If $\a-d/2<\b$, then $\t_n\ra0$, and \Cref{cond:whitenoise_gaussian_mixture} on $Q$ 
gives that
$$-\log Q(\t_n,2\t_n)\lesssim \t_n^{-2}=n^{(2\b-2\a+d)/(2\b+2\g+d)}\le n^{d/(2\b+2\g+d)}=n\e_n^2,$$
if $\b\le \a$.
If $0<\b<\a-d/2$, then $\t_n\ra\infty$, and \Cref{cond:whitenoise_gaussian_mixture} on $Q$ 
gives that
\begin{align*}
-\log Q(\t_n,2\t_n)\lesssim \t_n^{d/(\a-d/2)}&=n^{(d(\a-d/2-\b)/(\a-d/2)(2\b+2\g+d))} \\
&\le n^{d/(2\b+2\g+d)}=n\e_n^2.
\end{align*}
Finally if $\a-d/2=\b$, then $\t_n=1$ and $Q(\t_n,2\t_n)\gtrsim 1$.
Thus in all three cases $Q(\t_n,2\t_n)$ is bounded below by a power of $e^{-n\e_n^2}$.
Combining this with the preceding, we see that 
$\Pi\bigl(f:\|Af - Af_0\|\leq \e_n\bigr) \ge e^{- a_2n\e_n^2}$, for some positive constant $a_2$, which we can take
bigger than $1$. Then \cref{EqPriorMass} is satisfied for $\e_n$ equal to $\sqrt {a_2}$ times the current $\e_n$.

Verification of \cref{EqPriorMassRough}.
\Cref{LemmaPriorConcentrationRough} gives that 
$$\Pi_\t\bigl(f: \|f-f_0\|_0<2\eta_{n,\t}\bigr)\le \Pi_\t\bigl(f: \|f\|_0<2\eta_{n,\t}\bigr)
\le e^{-a_3(\t/\eta_{n,\t})^{d/(\a-d/2)}},$$
for some constant $a_3$. This is bounded above by $e^{-4a_2n\e_n^2}$ if 
$$\eta_{n,\t}= 2a_4 \t\, n^{(d/2-\a)/(2\b+2\g+d)}=2 a_4 \t\, \eta_n/\t_n,$$
for a sufficiently small constant $a_4>0$.

Verification of \cref{EqPriorApproximationExtended}.
Choosing $x=a_4\eta_n/\t_n=\eta_{n,\t}/(2\t)$ in \Cref{lem:whitenoise_gaussian_mixture_prior_approximation},
we see that the left side of \cref{EqPriorApproximationExtended} is bounded above
by $e^{-4a_2n\e_n^2}$ if $j_n$ satisfies
$$aj_n^{1/2-\a/d}\le a_4\eta_n/\t_n,\qquad\text{ and }\qquad 
ba_4^2(\eta_n/\t_n)^2j_n^{2\a/d}\ge 4a_2n\e_n^2.$$
Both inequalities become equalities for $j_n$ of the order $j_n\simeq  n^{d/(2\b+2\g+d)}$, as indicated
at the beginning of the proof. Since $1/2-\a/d<0$ and $2\a/d>0$, the left side of the
first inequality is decreasing in $j_n$ and the left side of second inequality is increasing.
Thus both inequalities are satisfied for $j_n=a_5 n^{d/(2\b+2\g+d)}$ and a sufficienty large
constant $a_5$. 

Finally we choose $\e_n$ and $j_n$ in \Cref{TheoremGeneralAdaptation} equal to $\sqrt{a_2}$ and
$a_5$ times the orders indicated at the beginning of the proof. Then \cref{EqjEpsilon}
is satisfied, and \cref{EqEtaEpsilonDelta} and \cref{EqEtaDelta} are satisfied if $\eta_n$ is chosen
of the indicated order times a sufficiently large constant.

% % % % % % % % % % % % % % % % % % % % % % % % % % % % % % % % % % % % %

\begin{appendix}
	\crefalias{section}{appsec}
	\crefalias{subsection}{appsubsec}

	\section{Galerkin Projection}
	\label{sec:whitenoise_galerkin_method}
	In this section we collect some (well known) results on the Galerkin method.
	Consider a scale of smoothness classes $(H_s)_{s\in\RR}$ as in \Cref{cond:smoothness_class_structure}.
	
	\begin{lemma}%[Direct and Inverse Inequalities]
		\label{lem:whitenoise_direct_inverse_estimates}
		If $V_j$ is a finite-dimensional space as in \Cref{AssumptionApproximation} such that 
		\cref{eq:approximation_property} and  \cref{eq:stability_property} hold,
		then, for $P_j: H_0\to V_j$ the orthogonal projection onto $V_j$, and $0\le s,t< S$,
		\begin{align}
		\label{eq:whitenoise_projection_estimate_-}
		\|f - P_j f\|_{-t} &\lesssim \d(j, t)\d(j,s) \|f\|_s,\qquad f\in H_0,\\
		\label{eq:whitenoise_inverse_estimate_-}
		\|g\|_s &\lesssim \frac{1}{\d(j, s)\d(j, t)} \|g\|_{-t},\qquad  g\in V_j.
		\end{align}
	\end{lemma}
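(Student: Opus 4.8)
The plan is to derive both inequalities from the Jackson estimate \cref{eq:approximation_property}, the Bernstein estimate \cref{eq:stability_property} and the norm duality \cref{EqNormDuality}, together with the defining orthogonality property of the $H_0$-projection $P_j$. Throughout I adopt the convention $\d(j,0)=1$ (consistent with the examples, where $\d(j,s)=j^{-s/d}$), which takes care of the endpoints $s=0$ or $t=0$; alternatively those endpoints are immediate, since $P_j$ is an $H_0$-contraction and $\|f-P_jf\|_0=\inf_{g\in V_j}\|f-g\|_0$.

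For \cref{eq:whitenoise_projection_estimate_-}, I would start from the norm duality, which gives
\[
\|f-P_jf\|_{-t}=\sup_{\|\phi\|_t\le 1}\langle f-P_jf,\phi\rangle_0 .
\]
Since $f-P_jf$ is $H_0$-orthogonal to $V_j$ and $P_j\phi\in V_j$, one may replace $\phi$ by $\phi-P_j\phi$ inside the inner product; the Cauchy--Schwarz inequality in $H_0$ then bounds the right-hand side by $\|f-P_jf\|_0\,\sup_{\|\phi\|_t\le 1}\|\phi-P_j\phi\|_0$. Because $P_jf$ is the $\|\cdot\|_0$-best approximation of $f$ from $V_j$, the Jackson inequality \cref{eq:approximation_property} gives $\|f-P_jf\|_0\lesssim\d(j,s)\|f\|_s$, and the same inequality applied to $\phi\in H_t$ with $\|\phi\|_t\le 1$ gives $\|\phi-P_j\phi\|_0\lesssim\d(j,t)$. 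Multiplying the two bounds yields \cref{eq:whitenoise_projection_estimate_-}.

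For \cref{eq:whitenoise_inverse_estimate_-}, the key intermediate step is the estimate $\|g\|_0\lesssim \d(j,t)^{-1}\|g\|_{-t}$ for $g\in V_j$. To obtain it, apply the norm duality with the admissible test direction $g$ itself (note $g\in V_j\subset H_t$ for $t<S$): $\|g\|_0^2=\langle g,g\rangle_0\le \|g\|_{-t}\,\|g\|_t$, and then bound $\|g\|_t\lesssim \d(j,t)^{-1}\|g\|_0$ by the Bernstein inequality \cref{eq:stability_property}; dividing by $\|g\|_0$ gives the claim (the case $g=0$ being trivial). A second application of the Bernstein inequality, now at smoothness index $s$, gives $\|g\|_s\lesssim \d(j,s)^{-1}\|g\|_0\lesssim \d(j,s)^{-1}\d(j,t)^{-1}\|g\|_{-t}$, which is \cref{eq:whitenoise_inverse_estimate_-}.

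Neither step presents a genuine obstacle; the only points requiring a little care are checking that the test functions used in the duality arguments actually lie in the spaces where Jackson and Bernstein are available (namely $\phi\in H_t$ and $g\in V_j\subset H_s$ for all $s<S$), and handling the degenerate endpoints $s=0$, $t=0$ via the convention above.
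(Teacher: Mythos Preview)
Your proof is correct and follows essentially the same route as the paper: duality plus orthogonality and Cauchy--Schwarz for \cref{eq:whitenoise_projection_estimate_-}, and duality combined with two applications of Bernstein for \cref{eq:whitenoise_inverse_estimate_-}. Your argument for the second inequality is in fact slightly slicker than the paper's, which takes a supremum over test functions $f\in V_j$ with $\|f\|_0\le 1$ where you simply use $g$ itself; the effect is the same.
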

	
	\begin{proof}
		By the dual norm relation in (ii) of \Cref{cond:smoothness_class_structure},
		and the orthogonality of $f-P_jf$  to $V_j$,
		\begin{align*}
		\|f-P_jf\|_{-t}&=\sup_{\|g\|_t\le 1}\langle f-P_jf,g\rangle_0
		=\sup_{\|g\|_t\le 1}\langle f-P_jf,g-P_jg\rangle_0\\
		&\le \|f-P_jf\|_0 \,\sup_{\|g\|_t\le 1}\,\|g-P_jg\|_0,
		\end{align*}
		by the Cauchy-Schwarz inequality. Here $\|f-P_jf\|_0\lesssim \d(j,s)\|f\|_s$ 
		and $\|g-P_jg\|_0\lesssim \d(j,t)\|g\|_t$, 
		both by \cref{eq:approximation_property}. Inequality \cref{eq:whitenoise_projection_estimate_-} follows.
		
		For the second inequality we have, for $g\in V_j$,
		$$\|g\|_0=\sup_{f\in V_j: \|f\|_0\le 1}\langle g,f\rangle_0
		\lesssim 
		\sup_{f\in V_j: \|f\|_0\le 1}\|g\|_{-t}\|f\|_t,$$
		again by the dual norm relation. Here we can bound $\|f\|_{t}$ by $\|f\|_0/\d(j,t)$, with the
		help of \cref{eq:stability_property}. We obtain \cref{eq:whitenoise_inverse_estimate_-}
		by first bounding $\|g\|_{s}$ with the help of \cref{eq:stability_property} and next using the
		preceding display.
	\end{proof}
	
	Let $A: H \to G$ be an injective bounded operator between separable Hilbert spaces, 
	and let $V_j$ be a finite-dimensional subspace of $H$.
	The Galerkin solution $f^{(j)}\in V_j$ to the image $Af$ of an element $f$ is defined 
	(also see \Cref{sec:whitenoise_general_contraction}) as the element in $V_j$ such that
	$Af^{(j)}$ is equal to the orthogonal projection of $Af$ onto the image space $W_j=AV_j$. 
	Thus, if $Q_j: G\to W_j$ denotes the orthogonal projection onto $W_j$, then the Galerkin
	solution can be written as 
	$$f^{(j)}=R_jA f, \quad \text {for } \qquad R_j=A^{-1}Q_j,$$
	where the inverse $A^{-1}$ is well defined on the linear subspace $W_j$.
	
	If the operators $R_j A$ are uniformly bounded with respect to $j$, then the 
	convergence rate $\|f^{(j)}-f\|_0$ of the Galerkin solution to $f$ is known to be of the same order 
	as the distance $\|P_jf-f\|_0$ of $f$ to its projection on $V_j$.
	(See Section 3.2 and Theorem~3.7 in \cite{kirsch2011introduction}, or the proof below.) 
	In particular, if $f\in H_s$ and $V_j$ satisfies \cref{eq:approximation_property},
	then the convergence rate is given by $\d(j,s)$. 
	
	In order to control the stochastic noise term $\xi$ 
	in the observation scheme \cref{eq:whitenoise_Y=Af+xi}, it is necessary also to control the norms
	of the operators $R_j$. The following lemma summarizes the properties of 
	the Galerkin projection needed in the proof of our main result. 
	
	\begin{lemma}
		\label{lem:whitenoise_R_j_estimates}
		If $V_j$ is a finite-dimensional space as in \Cref{AssumptionApproximation} such that 
		\cref{eq:approximation_property} and  \cref{eq:stability_property} hold,
		and $A: H_0\to G$ is a bounded linear operator satisfying $\|Af\| \simeq \|f\|_{-\g}$ for every $f\in H_0$, 
		then the norms of the operators $R_j: G\to H_0$ and $R_j A: H_0\to H_0$ satisfy
		\begin{align}
		\label{eq:whitenoise_galerkin_R_j_estimate}
		\|R_j\| &\lesssim_A \frac{1}{\d(j,\g)},\\
		\|R_j A\| &\lesssim_A 1.
		\label{eq:whitenoise_galerkin_R_jA_estimate}
		\end{align}
		Furthermore, for $f\in H_s$ the Galerkin solution $f^{(j)}\in V_j$ to $Af$ satisfies
		\begin{align}
		\label{eq:whitenoise_galerkin_solution_error_estimate}
		\|f^{(j)} - f\|_0 \lesssim_A \d(j,s)\, \|f\|_s.
		\end{align}
	\end{lemma}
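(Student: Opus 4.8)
The plan is to deduce all three bounds from two ingredients. The first is the norm equivalence $\|A\cdot\|\simeq\|\cdot\|_{-\g}$ together with the elementary facts that the orthogonal projection $Q_j$ is a contraction on $G$ and fixes $W_j=AV_j$ pointwise. The second is the Bernstein-type inverse estimate on the finite-dimensional space $V_j$, namely $\|g\|_0\lesssim\d(j,\g)^{-1}\|g\|_{-\g}$ for every $g\in V_j$; this is obtained exactly as in the proof of Lemma~\ref{lem:whitenoise_direct_inverse_estimates}, since for $g\in V_j$ the duality relation \cref{EqNormDuality} gives
\[
\|g\|_0=\sup_{h\in V_j,\ \|h\|_0\le1}\langle g,h\rangle_0\le\|g\|_{-\g}\sup_{h\in V_j,\ \|h\|_0\le1}\|h\|_\g ,
\]
and the last supremum is $\lesssim\d(j,\g)^{-1}$ by the stability inequality \cref{eq:stability_property}.

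First I would record a C\'ea-type quasi-optimality of the Galerkin solution in the \emph{weak} norm $\|\cdot\|_{-\g}$. Fix $f\in H_0$, put $f^{(j)}=R_jAf\in V_j$, and let $v\in V_j$ be arbitrary. Since $Av\in W_j$ is fixed by $Q_j$, we have $A(f^{(j)}-v)=Q_jAf-Av=Q_jA(f-v)$, hence, applying the norm equivalence twice and using $\|Q_j\|\le1$,
\[
\|f^{(j)}-v\|_{-\g}\simeq\|A(f^{(j)}-v)\|=\|Q_jA(f-v)\|\le\|A(f-v)\|\simeq\|f-v\|_{-\g}.
\]
Feeding this into the triangle inequality $\|f-f^{(j)}\|_0\le\|f-v\|_0+\|v-f^{(j)}\|_0$ and bounding $\|v-f^{(j)}\|_0\lesssim\d(j,\g)^{-1}\|v-f^{(j)}\|_{-\g}$ by the inverse estimate, then taking $v=P_jf$, yields the basic estimate
\[
\|f-f^{(j)}\|_0\lesssim\|f-P_jf\|_0+\frac1{\d(j,\g)}\|f-P_jf\|_{-\g}.
\]

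From this basic estimate the three assertions follow by choosing the right bound for the right-hand side. For \cref{eq:whitenoise_galerkin_solution_error_estimate}, let $f\in H_s$ with $s\in(0,S)$: then $\|f-P_jf\|_0\lesssim\d(j,s)\|f\|_s$ by \cref{eq:approximation_property}, and $\|f-P_jf\|_{-\g}\lesssim\d(j,\g)\d(j,s)\|f\|_s$ by \cref{eq:whitenoise_projection_estimate_-}, so the $\d(j,\g)$ cancels and $\|f-f^{(j)}\|_0\lesssim\d(j,s)\|f\|_s$. For \cref{eq:whitenoise_galerkin_R_jA_estimate}, bound the right-hand side by $\|f\|_0$ instead: $\|f-P_jf\|_0\le\|f\|_0$ because $P_j$ is a contraction, while the Cauchy--Schwarz/duality step in the proof of \cref{eq:whitenoise_projection_estimate_-} gives $\|f-P_jf\|_{-\g}\lesssim\d(j,\g)\|f-P_jf\|_0\lesssim\d(j,\g)\|f\|_0$; hence $\|f^{(j)}\|_0\le\|f\|_0+\|f-f^{(j)}\|_0\lesssim\|f\|_0$, i.e.\ $\|R_jA\|\lesssim1$. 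Finally, \cref{eq:whitenoise_galerkin_R_j_estimate} is direct: for $w\in G$ the element $g=R_jw=A^{-1}Q_jw$ lies in $V_j$ with $Ag=Q_jw$, so $\|g\|_{-\g}\simeq\|Ag\|=\|Q_jw\|\le\|w\|$, and the inverse estimate gives $\|R_jw\|_0=\|g\|_0\lesssim\d(j,\g)^{-1}\|w\|$.

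I expect the main obstacle to be the middle step: extracting the \emph{sharp} power of $\d(j,\g)$ in the strong-norm error, rather than a lossy one. The Galerkin solution is naturally under control only in the weak norm $\|\cdot\|_{-\g}$, and transferring that control to the $V_j$-component in $\|\cdot\|_0$ unavoidably costs a factor $\d(j,\g)^{-1}$; the crux is that this is exactly offset by the \emph{improved} rate $\d(j,\g)\d(j,s)$ — rather than merely $\d(j,s)$ — at which $P_jf$ approximates $f$ in $\|\cdot\|_{-\g}$, a gain that itself rests on the duality \cref{EqNormDuality}. A secondary, purely bookkeeping point is to ensure the indices $\g$ and $s$ stay within the range where the Jackson and Bernstein inequalities of Assumption~\ref{AssumptionApproximation} are available.
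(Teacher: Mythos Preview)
Your proof is correct and uses the same ingredients as the paper: the Bernstein/inverse estimate on $V_j$, the norm equivalence $\|A\cdot\|\simeq\|\cdot\|_{-\g}$, and the contraction property of $Q_j$. The organization differs slightly. You first isolate a C\'ea-type quasi-optimality in $\|\cdot\|_{-\g}$ and a unified ``basic estimate'' $\|f-f^{(j)}\|_0\lesssim\|f-P_jf\|_0+\d(j,\g)^{-1}\|f-P_jf\|_{-\g}$, then read off all three claims from it. The paper instead proves \eqref{eq:whitenoise_galerkin_R_j_estimate} first (exactly as you do), uses it to bound $\|R_jA\|$ via $f^{(j)}-P_jf=R_jA(f-P_jf)$, and for \eqref{eq:whitenoise_galerkin_solution_error_estimate} exploits the identity $f^{(j)}-f=(R_jA-I)(f-P_jf)$ together with the just-established bound $\|R_jA\|\lesssim1$ to get $\|f^{(j)}-f\|_0\lesssim\|f-P_jf\|_0$ directly. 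Your route yields a single master inequality from which everything follows; the paper's route to \eqref{eq:whitenoise_galerkin_solution_error_estimate} is marginally slicker in that it sidesteps the $\d(j,\g)^{-1}\cdot\d(j,\g)\d(j,s)$ cancellation you flagged as the crux, needing only the $\|\cdot\|_0$-Jackson bound.
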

	
	\begin{proof}
		For $g\in G$ we have $R_jg\in V_j$ and hence by \cref{eq:whitenoise_inverse_estimate_-},
		\begin{align*}
		\|R_j g\|_0 \lesssim \frac1{\d(j,\g)}\|R_jg\|_{-\g}
		\simeq \frac1{\d(j,\g)}\|AR_jg\|
		=\frac1{\d(j,\g)}\|Q_jg\|,
		\end{align*}
		since $AR_j=Q_j$. Because $\|Q_jg\|\le \|g\|$, we conclude that 
		$\|R_j\|\lesssim 1/\d(j,\g)$.
		
		By definition $f^{(j)}=R_jAf$, and $R_jA$ acts as the identity on $V_j$.
		Therefore $f^{(j)}-P_jf= R_jA(f-P_jf)$, and hence
		$$\|f^{(j)}-P_jf \|_0\le \|R_j\|\, \|A(f-P_jf)\|
		\simeq \|R_j\|\, \|f-P_jf\|_{-\g}\le \|R_j\|\, \d(j,\g)\|f\|_0,$$
		by \cref{eq:whitenoise_projection_estimate_-}. By the preceding paragraph $\|R_j\|\, \d(j,\g) \lesssim 1$, so that the right side
		is bounded above by a multiple of $\|f\|_0$. By the triangle inequality 
		$$\|R_jAf\|_0=\|f^{(j)}\|_0\le \|f^{(j)}-P_jf \|_0+\|P_jf-f \|_0+\|f\|_0\lesssim \|f\|_0,$$
		in view of the preceding display and the fact that $\|P_jf-f \|_0\le \|f\|_0$.
		This shows that $\|R_jA\|\lesssim 1$.
		
		Finally, since $f^{(j)}-f= (R_jA-I)(f-P_jf)$, we have that
		$$\|f^{(j)}-f\|_0=\|(R_jA-I)(f-P_jf)\|_0\le \bigl(\|R_jA\|+1\bigr)\|f-P_jf\|_0.$$
		Inequality \cref{eq:whitenoise_galerkin_solution_error_estimate} follows by the boundedness of $\|R_jA\|$
		and \cref{eq:approximation_property}.
	\end{proof}
	
	As is clear from the proof, the smoothing assumption $\|Af\|\simeq \|f\|_{-\g}$ can be  relaxed to the pair
	of inequalities
	\begin{align}
	\label{EqSmoothingConditionRelaxed}
	\|A f\|&\lesssim \|f\|_{-\gamma},\qquad f\perp V_j,\\
	\|A f\|&\gtrsim \|f\|_{-\gamma},\qquad f\in R(R_j).
	\label{EqSmoothingConditionRelaxedtwo}
	\end{align}
	This helps to cover cases in which the smoothing  condition is satisfied for a modification of the
	operator $A$, but not $A$ itself, for example a modification taking different boundary conditions
	of a differential operator into account.
	
	We introduce a \emph{modified Galerkin solution} to $Af$ to cover such a case.
	Let $A_0, A: H \to G$ be injective bounded operators between separable Hilbert spaces that possess a common inverse
	in the sense of existence of a linear map $B: D(B)\subset G\to H$ with domain $D(B)$ containing the linear span of the
	ranges of $A_0$ and $A$ such that that $BA_0=I=BA$. For simplicity of notation, write $B=A^{-}=A_0^{-}$. Intuitively,
	for the inverse problem, taking $A_0f$ or $Af$ as input data should be equivalent. However, it may be that $A_0$ is smoothing
	in a given scale $(H_s)_{s\in\RR}$, whereas $A$ is not. In that case we reconstruct as follows. Assume that
	$\Phi=A-A_0$ has closed range, and let $P_\Phi: G\to G$ be the orthogonal projection onto this range. Now let $Q_j: G\to G$ be the
	orthogonal projection onto the finite-dimensional space $(I-P_\Phi)AV_j$, and set
	\begin{equation}
	\label{EqmodifiedGalerkin}
	f^{(j)}= R_j Af, \quad \text {for } \qquad R_j=A^{-}Q_j(I-P_\Phi).
	\end{equation}
	Thus after removing the ``irrelevant part'' of $Af$ that does not influence the inversion, we project onto the
	finite-dimensional space $(I-P_\Phi)AV_j$ of similarly cleaned functions $Af$ with $f\in V_j$, and finally invert.
	%The inversion is well defined as $(I-P_\Phi)AV_j$ is contained in the span of the ranges of $A$ and $A_0$.
	
	\begin{lemma}
		\label{lem:whitenoise_R_j_estimates_modified}
		If $V_j$ is a finite-dimensional space as in \Cref{AssumptionApproximation} such that 
		\cref{eq:approximation_property} and  \cref{eq:stability_property} hold,
		and $A_0,A: H_0\to G$ are bounded linear operators with common inverse satisfying $\|A_0f\| \simeq \|f\|_{-\g}$ for every $f\in H_0$, 
		then the operators $R_j: G\to H_0$ and $R_j A: H_0\to H_0$  and $f^{(j)}=R_jAf$ as in \eqref{EqmodifiedGalerkin}
		satisfy \eqref{eq:whitenoise_galerkin_R_j_estimate}, \eqref{eq:whitenoise_galerkin_R_jA_estimate} and
		\eqref{eq:whitenoise_galerkin_solution_error_estimate}.
	\end{lemma}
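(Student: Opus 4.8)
The plan is to recognise the modified scheme \eqref{EqmodifiedGalerkin} as the ordinary Galerkin scheme of \Cref{sec:whitenoise_galerkin_method} for a suitably \emph{compressed} operator, and then to read off all three estimates from \Cref{lem:whitenoise_R_j_estimates}. Write $B:=A^{-}=A_0^{-}$, so that $BA=I=BA_0$ and hence $B\Phi=BA-BA_0=0$; thus $B$ vanishes on the (closed) range $R(\Phi)$. Since $P_\Phi$ is the orthogonal projection onto $R(\Phi)$ we have $(I-P_\Phi)\Phi=0$, so that
\[
\tilde A:=(I-P_\Phi)A=(I-P_\Phi)A_0
\]
is a bounded operator from $H_0$ into the closed subspace $\tilde G:=(I-P_\Phi)G$ of $G$. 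Moreover, since $B$ vanishes on $R(\Phi)\subset D(B)$, we have $B(I-P_\Phi)=B$ on $D(B)$, whence $B\tilde A=BA_0=I$; in particular $\tilde A$ is injective and $B$ restricts to $\tilde A^{-1}$ on $R(\tilde A)$. With these identities, $R_j=A^{-}Q_j(I-P_\Phi)=\tilde A^{-1}Q_j(I-P_\Phi)$ and $f^{(j)}=R_jAf=\tilde A^{-1}Q_j\tilde Af$, where $Q_j$ is now the orthogonal projection of $G$ onto $\tilde AV_j=(I-P_\Phi)AV_j$. Consequently $f^{(j)}$ is exactly the Galerkin solution to $\tilde Af$ in $V_j$ (it lies in $V_j$ and satisfies $\langle\tilde Af^{(j)},w\rangle=\langle\tilde Af,w\rangle$ for all $w\in\tilde AV_j$), and $R_j$ is the associated Galerkin operator precomposed with the projection $I-P_\Phi$, which has norm $\le1$.

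It therefore suffices to check that $\tilde A: H_0\to\tilde G$ meets the hypotheses used in the proof of \Cref{lem:whitenoise_R_j_estimates}, in the relaxed form \eqref{EqSmoothingConditionRelaxed}--\eqref{EqSmoothingConditionRelaxedtwo}. The upper bound is free, since $\|\tilde Af\|\le\|A_0f\|\lesssim\|f\|_{-\g}$ for every $f$. For the lower bound one first observes that $R(R_j)=V_j$: the map $Q_j(I-P_\Phi)$ sends $G$ onto $\tilde AV_j$, and $Bu=v$ whenever $u=\tilde Av$ with $v\in V_j$. Hence \eqref{EqSmoothingConditionRelaxedtwo} reduces to $\|\tilde Af\|\gtrsim\|f\|_{-\g}$ for $f\in V_j$, uniformly in $j$ — that is, to the statement that deleting the part of $A_0f$ lying in $R(\Phi)$ does not destroy the equivalence $\|A_0f\|\simeq\|f\|_{-\g}$. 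This is precisely the claim, recorded in \Cref{RemarkAAnul}, that the map $f\mapsto[Af]$ into $G/R(\Phi)\cong\tilde G$ is smoothing. It is immediate in the situation behind the applications (cf.\ \Cref{ExampleTwoDimensionalVolterra}), where $A_0$ arises from $A$ by subtracting the orthogonal projection of $Af$ onto (the closure of) the kernel $N$ of $B$: then $R(\Phi)$ lies in that closure while $R(A_0)$ is orthogonal to it, so $P_\Phi$ annihilates $R(A_0)$, $\tilde A=A_0$, and the bound holds with the very constant of $A_0$.

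Granting the relaxed smoothing of $\tilde A$, \Cref{lem:whitenoise_R_j_estimates} applied to $\tilde A$ yields $\|\tilde A^{-1}Q_j\|_{\tilde G\to H_0}\lesssim1/\d(j,\g)$, $\|\tilde A^{-1}Q_j\tilde A\|_{H_0\to H_0}\lesssim1$, and $\|f^{(j)}-f\|_0\lesssim\d(j,s)\|f\|_s$ for $f\in H_s$. Composing with $I-P_\Phi$ and using $\|I-P_\Phi\|\le1$ turns the first into \eqref{eq:whitenoise_galerkin_R_j_estimate}; the identity $R_jA=\tilde A^{-1}Q_j\tilde A$ turns the second into \eqref{eq:whitenoise_galerkin_R_jA_estimate}; and $f^{(j)}=R_jAf$ makes the third into \eqref{eq:whitenoise_galerkin_solution_error_estimate}. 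I expect the only genuine obstacle to be the lower smoothing bound \eqref{EqSmoothingConditionRelaxedtwo} for $\tilde A$: in complete generality it needs a positive gap between the closed subspaces $R(A_0)$ and $R(\Phi)$, but in the intended setting those subspaces are orthogonal and the difficulty disappears.
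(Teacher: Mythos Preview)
Your approach is essentially the paper's: reduce to \Cref{lem:whitenoise_R_j_estimates} applied to the compressed operator $\tilde A=(I-P_\Phi)A=(I-P_\Phi)A_0$ (the paper writes this as $[A]:H_0\to G/R(\Phi)$), after checking injectivity via $B\tilde A=I$ and verifying the relaxed smoothing bounds \eqref{EqSmoothingConditionRelaxed}--\eqref{EqSmoothingConditionRelaxedtwo}. The upper bound and the identifications $R(R_j)=V_j$, $R_jA=\tilde A^{-1}Q_j\tilde A$ are handled exactly as you do.

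Your hesitation about the lower bound is well placed and in fact pinpoints a subtle spot in the paper's own argument. The paper asserts that for $f=R_jg$ one has $A_0f\perp R(\Phi)$, because ``$A_0R_jg=Q_j(I-P_\Phi)g\in (I-P_\Phi)AV_j$''. But writing $R_jg=v\in V_j$ with $\tilde Av=Q_j(I-P_\Phi)g$, one only gets $(I-P_\Phi)A_0v=Q_j(I-P_\Phi)g$; the stated equality $A_0v=Q_j(I-P_\Phi)g$ is equivalent to $P_\Phi A_0v=0$, i.e.\ $A_0V_j\perp R(\Phi)$. This is exactly the orthogonality you flag: it holds in the intended applications (e.g.\ \Cref{ExampleTwoDimensionalVolterra}, where $A_0$ is constructed by subtracting the projection of $Af$ onto the kernel of $B$, so $R(A_0)\perp R(\Phi)$ globally), but it is not a consequence of the bare hypotheses ``common left inverse'' and ``$\|A_0f\|\simeq\|f\|_{-\gamma}$''. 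So your conditional conclusion---everything goes through once the lower smoothing bound for $\tilde A$ on $V_j$ is granted, and this is automatic when $R(A_0)\perp R(\Phi)$---is an accurate reading of what the lemma actually delivers.
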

	
	\begin{proof}
		The operator $[A]: H\to G/\Phi(H)$ mapping $f\in H$ into the class of $Af$ in the quotient space $G/\Phi(H)$
		is one-to-one, since $[Af]=0$ implies $Af\in R(\Phi)$ and hence $f=BAf=0$, since $B\Phi=0$. Identifying
		$[g]\in \tilde G:=G/\Phi(H)$ with the function $(I-P_\Phi)g$ with norm $\|[g]\|_{\tilde G}=\|(I-P_\Phi)g\|_G$,
		we see that $R_jAf$ as in \eqref{EqmodifiedGalerkin} is actually the Galerkin solution to $[A]f$. It suffices to
		show that $[A]: H\to \tilde G$ is smoothing in the sense of \eqref{EqSmoothingConditionRelaxed}.
		Now $\|[Af]\|_{\tilde G}=\|(I-P_\Phi)A_0f\|_G\le \|A_0f\|_G\simeq \|f\|_{-\gamma}$, for every $f\in H$.
		Furthermore, for every $f$ such that $A_0f\perp R(\Phi)$, the inequality is an equality. This is
		true for $f=R_jg$, since $A_0R_jg=Q_j(I-P_\Phi)g\in (I-P_{\Phi})AV_j$.
	\end{proof}

	\section{Approximation Numbers and Metric Entropy}
	\label{sec:whitenoise_approximation}
	\label{sec:whitenoise_entropy}
	The $j$th \emph{approximation number} of a bounded linear operator $T: G\to H$ between
	normed spaces is defined as 
	\begin{align}
	\label{EqAppromationNumber}
	a_j(T :G \to H) =\inf_{U: \text{Rank} U < j} \|T - U\|_{G\to H},
	\end{align}
	where the infimum is taken over all linear operators $U: G \to H$ of rank (i.e., dimension of the range
	space) strictly less than $j$, and the norm on the right is the operator norm
	$\|T-U\|_{G\to H}=\sup_{f: \|f\|_ G \leq 1} \|(T - U)f\|_H$.
	The approximation numbers measure the possibility of approximating an operator
	by simpler operators of finite-dimensional rank. There is a rich literature on approximation
	numbers. The main purpose of the present section is to note their relationship to
	singular values and to  metric entropy. Metric entropy plays an important role
	in the characterization of contraction rates of Bayesian posterior distributions.
	
	If $G\subset H$, we can take $T$ equal to the embedding $\iota: G\to H$, and then
	by linearity we see that there exists an operator $U$ of rank smaller than $j$ such that 
	$$\|f - Uf\|_{H}\lesssim a_j(\iota :G \to H) \, \|f\|_G,\qquad \forall f\in G.$$
	If $H$ is a Hilbert space, then the minimizing finite-rank operator $U$ is of course the orthogonal projection $P_j$ on $V_j$.
	However, the approximation numbers also `search' an optimal  projection space.
	If we take $G=H_s$ and $H=H_0$, then the range space $V_j$ of $U$ satisfies the approximation property
	\cref{eq:approximation_property}, with the numbers $\d(j,s)$ taken equal to 
	the approximation numbers $a_j(\iota : H_s \to H_0)$.
	
	The approximation number is an example of an \textit{s-number}, as introduced in
	\cite{pietsch1974_s-numbers}.  In general $s$-numbers are defined as maps $T\mapsto \bigl(s_j(T)\bigr)_{j\in\NN}$, 
	attaching to every operator $T$ a sequence of nonnegative
	numbers $s_j(T)$, satisfying certain axiomatic properties. In general, approximation numbers attached to operators
	$T: H\to H$ are the `largest' possible $s$-numbers, but on Hilbert spaces there is only one $s$-number:
	all $s$-numbers are the same (see 2.11.9 in \cite{pietsch1987eigenvalues}). Because
	the singular values are also $s$-numbers, the latter unicity yields the important relation that
	the approximation numbers of operators on Hilbert spaces are equal to their
	singular values. Recall here that the singular values of a compact operator $T: G\to H$
	are the roots of the eigenvalues of the self-adjoint operator $T^*T: G\to G$.
	
	The finite-rank approximations $U$ that (nearly) achieve the infimum in the definition of the approximation 
	numbers for different $j$ are not a-priori ordered. However, in many
	cases there exists a basis $(\phi_i)_{i\in\NN}$ such that the projections
	on the linear span of the first $j-1$ basis elements achieve the infimum.
	For Sobolev spaces e.g.\,  spline bases, the Fourier basis, or wavelet bases are
	all `optimal' in this sense (see \cite{quarteroni2009numerical_approximation_PDEs,cohen2003numerical_wavelet}).
	
	Approximation numbers are strongly connected to metric entropy. 
	In the literature the connection is usually made through the notion of 
	`entropy numbers', which are defined as follows. 
	The $j$-th \textit{entropy number} $e_j(T)$ of an operator $T: G\to H$ is defined as 
	the infimum of the numbers $\e>0$ so that the image $T(U_G)\subset H$ of the unit ball
	$U_G$ in $G$ can be covered by $2^{j-1}$ balls of radius $\e$ in $H$; or more formally,
	with $U_H$ the unit ball in $H$,
	\begin{align*}
	e_j(T)=&\inf \Bigl\{ \e>0: T(U_G) \subset \bigcup_{i=1}^{2^j - 1} (h_i + \e U_H), 
	\text{for some } h_1,\ldots,h_{2^j-1} \in H \Bigr\}.
	\end{align*}
	The function $j\mapsto e_j(T)$ is roughly the inverse function of the metric entropy
	of $T(U_G)$ relative to the metric induced by $\|\cdot\|_H$. Recall that the \textit{metric entropy} of a metric
	space $(U,d)$ is the logarithm of the covering number $N(\e, U,d)$, which is the minimal number of $d$-balls of
	radius $\e>0$ needed to cover the space $U$. Presently we consider the metric entropy
	$H(\e, T)=\log N\bigl(\e, T(U_G), \|\cdot\|_H\bigr)$ of $T(U_G)$ under the metric of $H$.
	Roughly we have that 
	$$N\bigl(\e, T(U_G), \|\cdot\|_H\bigr)\simeq 2^{j-1},\qquad\text{if}\qquad e_j(T)\simeq\e.$$
	If we use the logarithm at base $2$, then the map $\e\mapsto H(\e,T)$ is approximately inverse to 
	the map $j\mapsto e_j(T)$.
	
	Now it is proved in \cite{edmunds1988entropy_approximation} that
	for any operator $T:G\to H$ between Hilbert spaces with infinite-dimensional ranges:
	$$e_{j+1}(T)\le 2 a_{J+1}(T)\le 2\sqrt 2 e_{J+2}(T), $$
	for any natural numbers $j,J$ satisfying:
	$$j\log 2\ge 2\sum_{i=1}^J\log\frac{3a_i(T)}{a_{J+1}(T)}.$$
	As shown in \cite{edmunds1988entropy_approximation} this relationship between entropy numbers
	and approximation numbers may be
	solved to derive the entropy number from the approximation numbers in many cases.
	
	The following lemma gives one example, important to the present paper.
	
	\begin{lemma}[Metric entropy]
		\label{lem:whitenoise_metric_entropy_hilbert_scales}
		For a smoothness scale $(H_s)_{s\in\RR}$ satisfying \cref{eq:approximation_property} 
		with $\d(j,s)= j^{-s/d}$, and $s>0$ and $t\ge0$,
		\begin{align}
		\label{eq:whitenoise_metric_entropy_bound}
		\log N\bigl(\e, \{f\in H_s: \|f\|_s\le 1\},   \|\cdot\|_{-t}\bigr) \sim \e^{-d/(s+t)}.
		\end{align}
	\end{lemma}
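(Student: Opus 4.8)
The plan is to route the estimate through the \emph{approximation} and \emph{entropy numbers} of the canonical embedding $\iota\colon H_s\to H_{-t}$, using that on Hilbert spaces all $s$-numbers coincide, and the Edmunds--Triebel relation between approximation and entropy numbers recorded above. First I would show
$$a_j\bigl(\iota\colon H_s\to H_{-t}\bigr)\simeq j^{-(s+t)/d}.$$
The upper bound is immediate from the Jackson-type estimate \cref{eq:whitenoise_projection_estimate_-}: taking $U=P_j$, the orthogonal projection onto the $(j-1)$-dimensional space $V_j$, gives $\|(\iota-P_j)f\|_{-t}\lesssim \d(j,s)\d(j,t)\|f\|_s=j^{-(s+t)/d}\|f\|_s$. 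For the matching lower bound I would use the Bernstein-type estimate \cref{eq:whitenoise_inverse_estimate_-}: given any $U$ of rank $<j$, its restriction to the $(2j-1)$-dimensional space $V_{2j}$ has nontrivial kernel, so there is $g\in V_{2j}$ with $\|g\|_s=1$ and $Ug=0$; then $\|(\iota-U)g\|_{-t}=\|g\|_{-t}\gtrsim \d(2j,s)\d(2j,t)\simeq j^{-(s+t)/d}$, where polynomiality of $\d$ absorbs the factor $2^{(s+t)/d}$. (Only the Jackson inequality is needed for the upper bound in \cref{eq:whitenoise_metric_entropy_bound}; the lower bound uses the companion stability inequality, i.e.\ the full \Cref{AssumptionApproximation}.)

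\textbf{From approximation numbers to entropy numbers.} Next I would invoke the inequalities of \cite{edmunds1988entropy_approximation} displayed above, $e_{j+1}(T)\le 2a_{J+1}(T)\le 2\sqrt2\,e_{J+2}(T)$, valid whenever $j\log 2\ge 2\sum_{i=1}^J\log\bigl(3a_i(T)/a_{J+1}(T)\bigr)$, applied to $T=\iota$ (which is compact with infinite-dimensional range since $a_j(\iota)\to0$). Writing $\rho=(s+t)/d>0$ and using $a_i(\iota)\simeq i^{-\rho}$, one has
$$\sum_{i=1}^J\log\frac{3a_i(\iota)}{a_{J+1}(\iota)}\ \le\ J\log C_\rho+\rho\sum_{i=1}^J\log\frac{J+1}{i}\ \lesssim_\rho\ J$$
by Stirling's formula, so the side condition is met with $j$ equal to a fixed multiple of $J$. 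Feeding $a_{J+1}(\iota)\simeq (J+1)^{-\rho}$ into both inequalities then gives $e_j(\iota)\lesssim j^{-\rho}$ from the left inequality and $e_j(\iota)\gtrsim j^{-\rho}$ from the right one, hence $e_j(\iota)\simeq j^{-(s+t)/d}$.

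\textbf{From entropy numbers to metric entropy, and main obstacle.} Finally I would translate back: by definition of entropy numbers, the image of the unit ball of $H_s$ under $\iota$ is covered by $\simeq 2^{j}$ balls of radius $\e$ in $H_{-t}$ whenever $\e>e_j(\iota)$, while no such covering exists when $\e<e_j(\iota)$. Since $e_j(\iota)\simeq j^{-(s+t)/d}$ is regularly varying, for given small $\e$ one picks the largest index $j\simeq\e^{-d/(s+t)}$ with $e_j(\iota)\ge\e$; this sandwiches $\log N\bigl(\e,\{f\in H_s:\|f\|_s\le1\},\|\cdot\|_{-t}\bigr)$ between two multiples of $j\simeq\e^{-d/(s+t)}$, which is \cref{eq:whitenoise_metric_entropy_bound}. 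I expect the main difficulty to be bookkeeping rather than substance: checking that the constraint in the Edmunds--Triebel inequality is satisfiable with $j\propto J$ (the Stirling estimate above), and tracking the index shifts and $\log2$ factors when inverting $j\mapsto e_j(\iota)$; the only genuinely non-routine step is the dimension-counting lower bound on $a_j(\iota)$, which is standard but relies essentially on the stability/Bernstein inequality \cref{eq:whitenoise_inverse_estimate_-}.
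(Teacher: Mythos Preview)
Your proposal is correct and follows essentially the same route as the paper: bound the approximation numbers of $\iota\colon H_s\to H_{-t}$, invoke the Edmunds--Triebel relation to pass to entropy numbers, and invert. The paper's proof is terser---it simply cites \cref{eq:whitenoise_projection_estimate_-} for the approximation numbers and then defers to \cite{edmunds1988entropy_approximation}---whereas you spell out the side-condition verification and the inversion explicitly. One point worth flagging: you correctly observe that the \emph{lower} bound on $a_j(\iota)$ (and hence the lower half of the $\sim$ in \cref{eq:whitenoise_metric_entropy_bound}) uses the Bernstein-type inequality \cref{eq:whitenoise_inverse_estimate_-}, i.e.\ the full \Cref{AssumptionApproximation}, even though the lemma as stated hypothesizes only the Jackson inequality \cref{eq:approximation_property}; the paper's proof glosses over this, and your dimension-counting argument makes the needed extra input explicit.
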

	
	\begin{proof}
		By \cref{eq:whitenoise_projection_estimate_-} the 
		approximation number $a_j(\iota: H_s\to H_{-t})$ is of the order $\d(j,s)\d(j,t)=j^{-(s+t)/d}$. 
		It is shown in \cite{edmunds1988entropy_approximation} that
		the entropy numbers $e_j(\iota: H_s\to H_{-t})$ are of the order $j^{-(s+t)/d}$. 
		By the preceding reasoning  this can be inverted to obtain the order
		of the  metric entropy of the image of the unit ball in $H_{-t}$.
	\end{proof}
	
	In a similar way it is possible to invert approximation numbers that are not of 
	the polynomial form $j^{-s/d}$. There are many examples of this type,
	for instance, involving additional logarithmic terms, or exponentially decreasing
	rates.
\end{appendix}

\bibliographystyle{apa-good}
\bibliography{bib}

\end{document}